\author{Helena Kremp, Nicolas Perkowski}
\title{Weak solutions for singular Lévy SDEs in the rough regime}
\newtheorem{theorem}{Theorem}[section]
\newtheorem{definition}[theorem]{Definition}     
\newtheorem{proposition}[theorem]{Proposition}
\newtheorem{lemma}[theorem]{Lemma}	
\newtheorem{corollary}[theorem]{Corollary}
\newtheorem{remark}[theorem]{Remark}
\newtheorem{assumption}[theorem]{Assumption}
\numberwithin{equation}{section}
\newcommand{\R}{\mathbb{R}}
\newcommand{\N}{\mathbb{N}}
\newcommand{\E}{\mathds{E}}
\newcommand{\p}{\mathds{P}}
\newcommand{\F}{\mathcal{F}}
\newcommand{\La}{\mathfrak{L}^{\alpha}_{\nu}}
\let\oldmathcal\mathcal
\newcommand{\calC}{\mathscr{C}}
\newcommand{\CTcalC}{C_T\calC}
\newcommand{\calK}{\oldmathcal{K}}
\newcommand{\calX}{\mathscr{X}}
\newcommand{\calV}{\mathscr{V}}
\newcommand{\calG}{\mathscr{G}}
\renewcommand{\mathcal}[1]{\mathscr{#1}}
\renewcommand{\epsilon}{\varepsilon}
\newcommand{\para}{\varolessthan}
\newcommand{\arap}{\varogreaterthan}
\newcommand{\reso}{\varodot}
\newcommand{\tmop}[1]{\ensuremath{\operatorname{#1}}}
\newcommand{\squeeze}[2][0]{\mbox{$\medmuskip=#1mu\displaystyle#2$}}
\DeclarePairedDelimiter{\abs}{\lvert}{\rvert}
\DeclarePairedDelimiter{\norm}{\lVert}{\rVert}
\DeclarePairedDelimiter{\paren}{(}{)}
\begin{document}
\begin{center}
\begin{huge}
Rough weak solutions for singular Lévy SDEs\\
\end{huge}
\begin{Large}
\vspace{0.5cm}
Helena Kremp\footnote{Technische Universität Wien, helena.kremp@asc.tuwien.ac.at}, Nicolas Perkowski\footnote{Freie Universität Berlin, perkowski@math.fu-berlin.de}\\
\end{Large}
\vspace{1cm}
\hrule
\end{center}
We introduce a weak solution concept (called "rough weak solutions") for singular SDEs with additive $\alpha$-stable Lévy noise (including the Brownian noise case) and prove its equivalence to martingale solutions from \cite{kp} in the Young and rough regularity regime. In the rough regime this leads to the construction of certain rough stochastic sewing integrals involved. For rough weak solutions, we can then prove a generalized Itô formula. Furthermore, we show that canonical weak solutions are wellposed in the Young case (and equivalent to rough weak solutions), while ill-posed in the rough case. For the latter, we construct a counterexample for uniqueness in law.\\
\textit{Keywords: fractional Laplace operator, singular Lévy SDEs, equivalence of weak solution concepts, paracontrolled distributions, rough stochastic integrals}\\
\textit{MSC2020: 60H10, 60G51, 60L20, 60L40, 60K37.}
\vspace{0,5cm}
\hrule
\vspace{1cm}

\begin{section}{Introduction}
We study the weak well-posedness of SDEs
\begin{equation}\label{eq:intro-sde}
dX_{t}=V(t,X_{t})dt + dL_{t},\qquad X_{0}=x\in\R^{d},
\end{equation} 
driven by non-degenerate symmetric $\alpha$-stable Lévy noise $L$ for $\alpha\in (1,2]$ and with drift $V(t,\cdot)$ that is a Besov distribution in the space variable.

\noindent The special case where $L$ is a Brownian motion has received lots of attention in recent years, since such \textit{singular diffusions} arise as models for stochastic processes in random media. Examples are random directed polymers~\cite{Alberts2014, Delarue2016, Caravenna2017}, self-attracting Brownian motion in a random medium~\cite{Cannizzaro2018}, or a continuum analogue of Sinai's random walk in random environment (Brox diffusion,~\cite{Brox1986}). Singular diffusions also arise as ``stochastic characteristics'' of singular SPDEs, for example the KPZ equation, cf.~\cite{Gubinelli2017KPZ}, or the parabolic Anderson model, cf.~\cite{Cannizzaro2018}.

\noindent SDEs with distributional drifts were first considered in \cite{Bass2001, Flandoli2003} in the one-dimensional time-homogeneous setting.
Of course, for distributional $V$ the point evaluation $V(t,X_t)$ is not meaningful, so a priori it is not clear how to make sense of~\eqref{eq:intro-sde}. The appropriate perspective is not to consider $V(t,X_t)$ at fixed time $t$, but rather to work with the integral $\int_0^t V(s,X_s) ds$. The intuition is that, because of small scale oscillations of $X$ induced by the oscillations of $L$, we only ``see an averaged version'' of $V$ and this gives rise to some regularization, at least for a Brownian motion or a sufficiently ``wild'' L\'evy jump process. On the other hand we would not expect any regularization from a Poisson process. In the Brownian case, this intuition can be made rigorous in different ways. For example via a Zvonkin transform which removes the drift, cf.~\cite{Zvonkin1974, Veretennikov1981, Bass2001, Krylov2005, Flandoli2010, Flandoli2017}, by considering the associated martingale problem and by constructing a domain for the singular generator, cf.~\cite{Flandoli2003, Delarue2016, Cannizzaro2018}, or by Dirichlet forms as in \cite{Mathieu1994}. In the one-dimensional case it is also possible to apply an It\^o-McKean construction based on space and time transformations, cf.~\cite{Brox1986}.

\noindent In \cite{kp} we followed the martingale problem approach in the spirit of~\cite{Delarue2016, Cannizzaro2018} who considered the Brownian case. Formally, $X$ solves~\eqref{eq:intro-sde} if and only if it solves the martingale problem for the generator $\mathcal G^V = \partial_t - \La + V\cdot \nabla$, where the fractional Laplacian $(-\La)$ is the generator of $L$. That is, for all functions $u$ in the domain of $\mathcal G^V$, the process $u(t,X_t) - u(0,x) - \int_0^t \mathcal G^V u(s,X_s) ds$, $t \geqslant 0$, is a martingale. One difficulty is that the domain of $\mathcal G^V$ necessarily has a trivial intersection with the set of smooth functions: If $u$ is smooth, then $(\partial_t - \La)u$ is smooth as well, while for non-constant $u$ the product $V \cdot \nabla u$ is only a distribution, but not a continuous function. If we want $\mathcal G^V u$ to be a continuous function, then $u$ has to be non-smooth, so that $(\partial_t - \La)u$ is also a distribution which has appropriate cancellations with $V\cdot \nabla u$.

\noindent We can find such $u$ by solving the Kolmogorov backward equation for continuous right-hand sides and regular terminal conditions, such that $\mathcal{G}^{\calV}u=f$ by construction, as carried out in \cite{kp}.

\noindent There have been several results on singular L\'evy SDEs in the Young regime in recent years. \cite{Athreya2018} consider the time-homogeneous one-dimensional case and construct weak solutions via a Zvonkin transform. They additionally establish strong uniqueness and existence by a Yamada-Watanabe type argument, which in particular is restricted to $d=1$. Related to that, but for Hölder continuous drift of regularity at least $1-\alpha/2$, \cite{Priola2012} proves pathwise uniqueness in the multidimensional, time-homogeneous case. Two nearly simultaneous articles,  \cite{Ling2019} and \cite{deRaynal2019}, consider the multidimensional case (time-homogeneous, respectively time-inhomogeneous) and prove existence and uniqueness for the martingale problem. They consider $V \in C([0,T], B_{p,q}^\beta)$ for general $p,q$ (subject to suitable conditions), where for $p=q=\infty$ they require the Young regime, $\beta > (1-\alpha)/2$. Let us also mention~\cite{Harang2020Regularity}, who prove pathwise regularization by noise results for SDEs driven by (very irregular) fractional L\'evy noise, based on the methods of~\cite{Catellier2016, Harang2020Cinfinity}.

\noindent We treat the multidimensional time-inhomogeneous case with drifts in the rough regime. However, we concentrate on $B_{\infty,\infty}$ Besov spaces and do not consider $B_{p,q}$ for general $p,q$.

\noindent The main contribution of this paper is the derivation of a well-posed rough weak solution concept. In the case of bounded and measurable coefficients the equivalence of probabilitic weak solutions and solutions of the martingale problem is by now classical, cf.~\cite{Stroock2006} (in the Brownian noise case) and \cite{Kurtz2011} (in the Lévy noise case). It has so far been an open problem (in both the Brownian and Lévy noise case), whether these results can be generalized to distributional drifts in the rough regime. In the Young regime with time-independent drift \cite{Athreya2018} introduces a canonical weak solution concept, replacing the singular drift by the limit of smooth drift terms. 
The same concept can be considered in multiple dimensions and with time-depending drift. A canonical weak solution is then a tuple of stochastic processes $(X,L)$ on some probability base space, such that $L$ is a symmetric $\alpha$-stable Lévy process and $X$ is given by
\begin{align}\label{eq:XZL}
X=x+Z+L,
\end{align} for a continuous drift process $Z$ that is given as a limiting object (in probability) 
\begin{align*}
Z=\lim_{n\to\infty}\int_{0}^{\cdot}V^{n}(r,X_{r})dr=:\lim_{n\to\infty}Z^{n}
\end{align*} for smooth $(V^{n})$ approximating $V$. Furthermore, $Z$ shall satisfy the following Hölder-type bound: there exists $C>0$, such that for all $0\leqslant s<t\leqslant T$,
\begin{align}\label{eq:Z-bound-intro}
\E[\abs{Z_{t}-Z_{s}}^{2}]\leqslant C\abs{t-s}^{2(\alpha+\beta)/\alpha}.
\end{align} 
The particular Hölder-regularity in \eqref{eq:Z-bound-intro} originates from the time regularity of the solution of the Kolmogorov backward equation. This implies that the solution $X$ is a Dirichlet process and Itô formulas are available.\\
The recent article \cite{IR} consideres the multidimensional Brownian noise case and proves equivalence of martingale solutions and so-called $B$-solutions from \cite[Definition 6.2]{IR} with $B=C_{T}\calC^{\beta}_{\R^{d}}$ for $\beta\in (-1/2,0)$, that additionally satisfy the ``reinforced local time property" (cf. \cite[Definition 6.7]{IR}). 
The reinforced local time property boils down to stability of the integral 
\begin{align}\label{eq:int-u-Z}
\int_{0}^{t} \nabla u (s,X_{s})dZ_{s},
\end{align} when approximating the drift term by $(Z^{n})$ and $u$ solving $\mathcal{G}^{V}u=f$, by $(u^{n})\subset C^{1,2}_{b}([0,T]\times\R^{d})$.\\ In the Young case we show that the regularity of the PDE solution, stability of the PDE solution map and the bound \eqref{eq:Z-bound-intro} together yield stability for the stochastic integral \eqref{eq:int-u-Z} using the stochastic sewing lemma by \cite{le}. However, this fails in the rough case as the PDE solution is too irregular.  
In the setting of general dimension, time-depending drift and $\alpha$-stable noise for $\alpha\in (1,2]$, we prove that canonical weak solutions are equivalent to martingale solutions in the Young regime. 
In the rough case, we prove that canonical weak solutions are in general non-unique in law. Heuristically, this is due to the fact that the canonical weak solution does not uniquely determine the enhancement $\calV$ of $V$. In the one-dimensional Brownian case the situation becomes particularily interesting.  While for any smooth approximating sequence $(V^{n})$ of the singular drift $V$, the strong solutions $X^{n}$ of  
\begin{align*}
dX_{t}^{n}=V^{n}(t,X_{t}^{n})dt+dB_{t}
\end{align*}
converge in distribution to the same limit, given by the solution of the $\mathcal{G}^{\cal V}$- martingale problem, it is however not the case that there exists a unique canonical weak solution in the above sense.\\ 
Our approach to obtain a well-posed weak solution concept in the rough regime is to impose further assumptions that ensure the uniqueness of the extension of the integral \eqref{eq:int-u-Z}.
To that aim, we use ideas from rough paths theory, specifically the construction of rough stochastic integrals from \cite{le-friz-hocq}. Lifting $Z$ to a rough \textit{stochastic} integrator process $(Z,\mathbb{Z}^{V})$ enables to extend the integral from smooth integrands $\nabla u^{n}$ in a stable manner to (para-)controlled integrands $\nabla u$. The lift is chosen in such a way as to correct for the most irregular terms in the paracontrolled decomposition of $\nabla u$. 
We then define rough weak solutions accordingly. That is, we require a rough weak solution $X$ to satisfy the conditions of a canonical weak solution and to be furthermore such that the iterated integrals $\mathbb{Z}^{V}$
are well-defined and satisfy suitable Hölder-moment-bounds. Here $\mathbb{Z}^{V}$ formally corresponds to the resonant product component in the enhancement $\calV$. 
We say that $(Z,\mathbb{Z}^{V})$ is a so-called \textit{rough stochastic integrator}, while the process $(\nabla u(t,X_{t}))$ for the solution $u$ of the Kolmogorov equation for regular right-hand side, is \textit{stochastically controlled}.\\ The terms ``stochastically controlled'' and ``rough (stochastic) integrator'' are motivated by \cite{le-friz-hocq}, but our definitions and the integral we construct differ. The difficulty here does not arise due to a low regularity integrator, but due to a integrand of low regularity, that is however controlled. Further difficulties arise due to the integrability issue in the pure stable noise case, which means that we can construct the integral in $L^{2}(\p)$, but possibly not in $L^{p}(\p)$ for $p>2$. Summarizing, for a rough weak solution $X$ we obtain a stable rough stochastic integral 
\begin{align}\label{eq:int-u-ZZ}
\int_{0}^{t}\nabla u(s,X_{s})d(Z,\mathbb{Z}^{V})_{s}
\end{align} in $L^{2}(\p)$. For regular integrands the rough stochastic integral \eqref{eq:int-u-ZZ} coincides with the stochastic integral against $Z$ in \eqref{eq:int-u-Z}. More generally, we construct the rough stochastic integral $\int_{0}^{\cdot} f_{t}d(Z,\mathbb{Z}^{A})_{t}$, for rough stochastic integrators $(Z,\mathbb{Z}^{A})$ for a given stochastic process $A$ (formally $\mathbb{Z}^{A}_{st}=\int_{s}^{t}A_{s,r}dZ_{r}$) and an integrand $f$, that is stochastically controlled by $A$.\\
The stability of the rough stochastic integral \eqref{eq:int-u-ZZ} then enables to prove that a rough weak solution is indeed a solution of the $\mathcal{G}^{\calV}$-martingale problem, in particular unique in law. 
To prove that a martingale solution is a rough weak solution we need to show the existence of the iterated integrals $\mathbb{Z}^{V}$ satisfying suitable bounds. We show that the bounds on $\mathbb{Z}^{V}$ are implied by regularity properties and fortunate cancellations between the solutions of Kolmogorov backward equations for certain \textit{singular} paracontrolled terminal conditions,  
whose existence theory we carried out in \cite{kp-sk}.

\noindent The paper is structured as follows.
\cref{sec:ws} introduces the rough weak solution concept and states the main \cref{thm:mainthm,thm:igenIto}. In \cref{sec:roughint} we construct the general rough stochastic integral. In \cref{sec:wsemp} we prove in \cref{thm:wsimp,thm:mpiws} equivalence of the weak solution concepts, which together imply \cref{thm:mainthm}. Furthermore we prove a generalized Itô formula for rough weak solutions in \cref{thm:genIto}.  \cref{sec:wsyc} investigates the canonical weak solution concept. We prove well-posedness in the Young case and ill-posedness in the rough case.

\end{section}
\begin{section}{Preliminaries}

\noindent Let $(p_{j})_{j\geqslant -1}$ be a smooth dyadic partition of unity, i.e. a family of functions $p_{j}\in C^{\infty}_{c}(\R^{d})$ for $j\geqslant -1$, such that 
\begin{enumerate}
\item[1.)]$p_{-1}$ and $p_{0}$ are non-negative radial functions (they just depend on the absolute value of $x\in\R^{d}$), such that the support of $p_{-1}$ is contained in a ball and the support of $p_{0}$ is contained in an annulus;
\item[2.)]$p_{j}(x):=p_{0}(2^{-j}x)$, $x\in\R^{d}$, $j\geqslant 0$;
\item[3.)]$\sum_{j=-1}^{\infty}p_{j}(x)=1$ for every $x\in\R^{d}$; and
\item[4.)]$\operatorname{supp}(p_{i})\cap \operatorname{supp}(p_{j})=\emptyset$ for all $\abs{i-j}>1$.
\end{enumerate}
We then define the Besov spaces for $p,q\in [1,\infty]$,
\begin{align}\label{def:bs}
B^{\theta}_{p,q}:=\{u\in\mathcal{S}':\norm{u}_{B^{\theta}_{p,q}}=\norm[\big]{(2^{j\theta}\norm{\Delta_{j}u}_{L^{p}})_{j\geqslant -1}}_{\ell^{q}}<\infty\},
\end{align}
where $\Delta_{j}u=\mathcal{F}^{-1}(p_{j}\mathcal{F}u)$ are the Littlewood-Paley blocks, and the Fourier transform is defined with the normalization $\hat{\varphi}(y):=\F\varphi (y):=\int_{\R^{d}}\varphi(x)e^{-2\pi i\langle x,y\rangle}dx$ (and $\F^{-1}\varphi(x)=\hat{\varphi}(-x)$); moreover, $\mathcal{S}$ are the Schwartz functions and $\mathcal S'$ are the Schwartz distributions.\\
Let $C^{\infty}_{b}=C^{\infty}_{b}(\R^{d},\R)$ denote the space of bounded and smooth functions with bounded partial derivatives.
For $q=\infty$, the space $B^{\theta}_{p,\infty}$ has the unpleasant property that $C^{\infty}_b \subset B^\theta_{p,\infty}$ is not dense.
Therefore, we rather work with the following space:
\begin{align}\label{eq:sep}
B^{\theta}_{p,\infty}:=\{u\in\mathcal{S}'\mid\lim_{j\to\infty}2^{j\theta}\norm{\Delta_{j}u}_{L^{p}}=0\},
\end{align} 
for which $C^\infty_b$ is a dense subset (cf. \cite[Remark 2.75]{Bahouri2011}). We also use the notation $\calC^{\theta}_{\R^{d}}:=(\calC^{\theta})^{d}=\calC^{\theta}(\R^{d},\R^{d})$, $\calC^{\theta-}:=\bigcap_{\gamma<\theta}\calC^{\gamma}$ and $\calC^{\theta+}=\bigcup_{\gamma>\theta}\calC^{\gamma}$. Furthermore, we introduce the notation $\calC^{\theta}_{p}:=B^{\theta}_{p,\infty}$ for $\theta\in\R$ and $p\in [1,\infty]$, where $\calC^{\theta}:=\calC^{\theta}_{\infty}$ with norm denoted by $\norm{\cdot}_{\theta}:=\norm{\cdot}_{\calC^{\theta}}$.\\ 
For $1\leqslant p_{1}\leqslant p_{2}\leqslant\infty$, $1\leqslant q_{1}\leqslant q_{2}\leqslant\infty$ and $s\in\R$, the Besov space $B^{s}_{p_{1},q_{1}}$ is continuously embedded in $B_{p_{2},q_{2}}^{s-d(1/p_{1}-1/p_{2})}$ (cf. \cite[Proposition 2.71]{Bahouri2011}). Furthermore, we will use that for $u\in B^{s}_{p,q}$ and a multi-index $n\in\N^{d}$, $\norm{\partial^{n}u}_{B^{s-\abs{n}}_{p,q}}\lesssim \norm{u}_{B^{s}_{p,q}}$, which follows from the more general multiplier result from \cite[Proposition 2.78]{Bahouri2011}.\\
We recall from Bony's paraproduct theory (cf. \cite[Section 2]{Bahouri2011}) that in general for $u\in\calC^{\theta}$ and $v\in\calC^{\beta}$ with $\theta,\beta\in\R$, the product $u v:=u\para v+u\arap v +u \reso v$ , is  well defined in $\calC^{\min(\theta,\beta,\theta+\beta)}$ if and only if $\theta+\beta>0$. Denoting $S_{i}u=\sum_{j=-1}^{i-1}\Delta_{j}u$, the paraproducts are defined as follows
\begin{align*}
u\para v:=\sum_{i\geqslant -1} S_{i-1}u\Delta_{i}v,\quad u\arap v:=v\para u, \quad u\reso v:= \sum_{\abs{i-j}\leqslant 1}\Delta_{i}u\Delta_{j}v.
\end{align*} Here, we use the notation of \cite{Martin2017, Mourrat2017Dynamic} for the para- and resonant products $\para, \arap$ and  $\reso$.\\
In estimates we often use the notation $a\lesssim b$, which means, that there exists a constant $C>0$, such that $a\leqslant C b$. In the case that we want to stress the dependence of the constant $C(d)$ in the estimate on a parameter $d$, we write $a\lesssim_{d} b$.\\
The paraproducts satisfy the following estimates for $p,p_{1},p_{2}\in[1,\infty]$ with $\frac{1}{p}=\frac{1}{p_{1}}+\frac{1}{p_{2}}\leqslant 1$ and $\theta,\beta\in\R$ (cf. \cite[Theorem A.1]{PvZ} and \cite[Theorem 2.82, Theorem 2.85]{Bahouri2011})
\begin{equation}
\begin{aligned}\label{eq:paraproduct-estimates}
\norm{u\reso v}_{\calC^{\theta+\beta}_{p}} & \lesssim\norm{u}_{\calC^{\theta}_{p_{1}}}\norm{v}_{\calC^{\beta}_{p_{2}}}, \qquad \text{if }\theta +\beta > 0,\\
\norm{u\para v}_{\calC^{\beta}_{p}} \lesssim\norm{u}_{L^{p_{1}}}\norm{v}_{\calC^{\beta}_{p_{2}}}& \lesssim\norm{u}_{\calC^{\theta}_{p_{1}}}\norm{v}_{\calC^{\beta}_{p_{2}}}, \qquad \text{if } \theta > 0,\\
\norm{u\para v}_{\calC^{\beta+\theta}_{p}}& \lesssim\norm{u}_{\calC^{\theta}_{p_{1}}}\norm{v}_{\calC^{\beta}_{p_{2}}}, \qquad \text{if } \theta < 0.
\end{aligned}
\end{equation}
So if $\theta + \beta > 0$, we have $\norm{u v}_{\calC^{\gamma}_{p}}\lesssim\norm{u}_{\calC^{\theta}_{p_{1}}}\norm{v}_{\calC^{\beta}_{p_{2}}}$ for $\gamma:=\min(\theta,\beta,\theta+\beta)$.\\
Next, we collect some facts about $\alpha$-stable Lévy processes and their generators and semigroups. For $\alpha\in (0,2]$, a symmetric $\alpha$-stable Lévy process $L$ is a Lévy process, that moreover satisfies the scaling property $(L_{k t})_{t \geqslant 0}\stackrel{d}{=}k^{1/\alpha}(L_{t})_{t \geqslant 0}$ for any $k>0$ and $L\stackrel{d}{=}-L$, where $\stackrel{d}{=}$ denotes equality in law. These properties determine the jump measure $\mu$ of $L$, see \cite[Theorem 14.3]{Sato1999}.  That is, if $\alpha\in (0,2)$, the Lévy jump measure $\mu$ of $L$ is given by
\begin{align}\label{eq:mu}
\mu(A):=\E\bigg[\sum_{0\leqslant t\leqslant 1}\mathbf{1}_{A}(\Delta L_{t})\bigg]=\int_{S}\int_{\R^{+}}\mathbf{1}_{A}(k\xi)\frac{1}{k^{1+\alpha}}dk\tilde \nu(d\xi),\quad A\in \mathcal{B}(\R^{d}\setminus\{0\}),
\end{align} where $\tilde \nu$ is a finite, symmetric, non-zero measure on the unit sphere $S\subset\R^{d}$.  
Furthermore, we also define for $A\in\mathcal{B}(\R^{d}\setminus\{0\})$ and $t\geqslant 0$ the Poisson random measure
\begin{align*}
\pi(A\times [0,t])=\sum_{0\leqslant s\leqslant t}\mathbf{1}_{A}(\Delta L_{s}),
\end{align*} with intensity measure $dt\mu(dy)$. Denote the compensated Poisson random measure of $L$ by $\hat{\pi}(dr,dy):=\pi(dr,dy)-dr\mu(dy)$.
We refer to the book by Peszat and Zabczyk \cite{peszat_zabczyk_2007} for the integration theory against Poisson random measures and for the Burkholder-Davis-Gundy inequality \cite[Lemma 8.21 and 8.22]{peszat_zabczyk_2007}, which we will both use in the sequel.
The generator $A$ of $L$ satisfies $C_{b}^{\infty}(\R^{d})\subset \operatorname{dom}(A)$ and is given by
\begin{align}\label{eq:functional}
A\varphi(x)=\int_{\R^{d}}\paren[\big]{\varphi(x+y)-\varphi(x)-\mathbf{1}_{\{\abs{y}\leqslant 1\}}(y) \nabla \varphi(x) \cdot y}\mu(dy)\qquad\text{for }\varphi\in C_{b}^{\infty}(\R^{d}).
\end{align} If $(P_t)_{t\geqslant 0}$ denotes the semigroup of $L$, the convergence $t^{-1}(P_{t}f(x)-f(x))\to Af(x)$ is uniform in $x\in\R^{d}$ (see \cite[Theorem 5.4]{peszat_zabczyk_2007}).

Next, we recall the equivalent Fourier definition of the operator $A$ and the Schauder  and commutator estimates for its semigroup from \cite{kp, kp-sk}.

\begin{definition}\label{def:fl}
Let $\alpha \in (0,2)$ and let $\nu$ be a symmetric (i.e. $\nu(A)=\nu(-A)$), finite and non-zero measure on the unit sphere $S\subset\R^{d}$. We define the operator $\La$ as
\begin{align}
\La\F^{-1}\varphi=\F^{-1}(\psi^{\alpha}_{\nu} \varphi)\qquad\text{for $\varphi\in C^\infty_b$,}
\end{align}  where
$\psi^{\alpha}_{\nu} (z):=\int_{S}\abs{\langle z,\xi\rangle}^{\alpha}\nu(d\xi).$
For $\alpha=2$, we set $\La:=-\frac{1}{2}\Delta$.
\end{definition}

\begin{remark}
If we take $\nu$ as a suitable multiple of the Lebesgue measure on the sphere, then $\psi^\alpha_\nu(z) = |2\pi z|^\alpha$ and thus $\La$ is the fractional Laplace operator $(-\Delta)^{\alpha/2}$. 
\end{remark}

\begin{lemma}\label{rem:lgen}\footnote{\cite[Lemma 2.3]{kp}}
Let $\alpha \in (0,2)$ and let again $\nu$ be a symmetric, finite and non-zero measure on the unit sphere $S\subset\R^{d}$. Then for $\varphi \in C^\infty_b$ we have $-\La \varphi = A\varphi$, where $A$ is the generator of the symmetric, $\alpha$-stable Lévy process $L$ with characteristic exponent $\E[\exp(2\pi i\langle z,L_{t}\rangle )]=\exp(-t\psi^{\alpha}_{\nu}(z))$. The process $L$ has the jump measure $\mu$ as defined in~\cref{eq:mu}, with $\tilde \nu = C \nu$ for some constant $C>0$.
\end{lemma}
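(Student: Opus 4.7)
The plan is to proceed in two stages: first identify the Lévy jump measure of the $\alpha$-stable process $L$ prescribed through its characteristic exponent $\psi^{\alpha}_{\nu}$, and then match the symbol of the generator $A$ with the Fourier multiplier $\psi^{\alpha}_{\nu}$ defining $\La$. For the first step, note that by the Lévy--Khintchine formula for a symmetric Lévy process without Gaussian part, the characteristic exponent must admit the representation $\psi(z)=\int_{\R^{d}}(1-\cos(2\pi\langle z,y\rangle))\mu(dy)$. Requiring this to equal $\psi^{\alpha}_{\nu}(z)=\int_{S}|\langle z,\xi\rangle|^{\alpha}\nu(d\xi)$ for every $z$ forces $\mu$ to be $\alpha$-homogeneous in the radial direction, so in polar coordinates $y=k\xi$ it must take the form $\mu(dy)=k^{-1-\alpha}\,dk\,\tilde\nu(d\xi)$ as in~\eqref{eq:mu} for some symmetric finite measure $\tilde\nu$ on $S$. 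A direct substitution $u=2\pi k|t|$ gives $\int_{0}^{\infty}(1-\cos(2\pi k t))k^{-1-\alpha}\,dk=(2\pi)^{\alpha}c_{\alpha}|t|^{\alpha}$ with $c_{\alpha}:=\int_{0}^{\infty}(1-\cos u)u^{-1-\alpha}\,du\in(0,\infty)$ for $\alpha\in(0,2)$, hence $\psi(z)=(2\pi)^{\alpha}c_{\alpha}\int_{S}|\langle z,\xi\rangle|^{\alpha}\tilde\nu(d\xi)$. Setting $C:=1/((2\pi)^{\alpha}c_{\alpha})$ yields $\tilde\nu=C\nu$, which is the second assertion.

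For the identification of $A$ and $-\La$, the symmetry of $\mu$ allows the generator from~\eqref{eq:functional} to be rewritten as $A\varphi(x)=\tfrac{1}{2}\int_{\R^{d}}(\varphi(x+y)+\varphi(x-y)-2\varphi(x))\,\mu(dy)$, because the compensator $\mathbf{1}_{\{|y|\leqslant 1\}}\nabla\varphi(x)\cdot y$ integrates to zero under $\mu$, while the symmetrised second difference controls the integrability near the origin through the bound $|\varphi(x+y)+\varphi(x-y)-2\varphi(x)|\lesssim(\|\nabla^{2}\varphi\|_{\infty}|y|^{2})\wedge\|\varphi\|_{\infty}$ together with $\int(|y|^{2}\wedge 1)\,\mu(dy)<\infty$. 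For Schwartz $\varphi\in\mathcal{S}$ we may apply Fubini inside the Fourier transform and use $\widehat{\varphi(\cdot+y)}(z)=e^{2\pi i\langle z,y\rangle}\hat\varphi(z)$ to compute $\widehat{A\varphi}(z)=-\int(1-\cos(2\pi\langle z,y\rangle))\mu(dy)\,\hat\varphi(z)=-\psi^{\alpha}_{\nu}(z)\hat\varphi(z)$, which coincides with $-\widehat{\La\varphi}(z)$ by the definition of $\La$. Hence $A\varphi=-\La\varphi$ pointwise on $\mathcal{S}$.

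To extend the identity from $\mathcal{S}$ to $C^{\infty}_{b}$, decompose $\varphi=\sum_{j\geqslant-1}\Delta_{j}\varphi$ into Littlewood--Paley blocks. Each block has compactly supported Fourier transform and is therefore Schwartz, so by the previous paragraph $A\Delta_{j}\varphi=-\La\Delta_{j}\varphi$ pointwise. Both $A\varphi$ and $\La\varphi$ are well defined on $\varphi\in C^{\infty}_{b}$: the former directly from the symmetric integral representation using the majorant recalled above, and the latter as the convergent sum $\sum_{j}\La\Delta_{j}\varphi$ in $C_{b}$, exploiting the Schauder-type bounds for $\psi^{\alpha}_{\nu}$ collected in~\cite{kp,kp-sk}. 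The blockwise identity can then be summed and passed to the limit by dominated convergence, with the $|y|^{2}\wedge 1$ majorant providing a $j$-independent bound on the integrand of $A(S_{N}\varphi)$ for the partial sums $S_{N}\varphi=\sum_{j\leqslant N}\Delta_{j}\varphi$. The main technical obstacle is precisely to justify this exchange of the nonlocal operator $A$ with the infinite Littlewood--Paley sum; once the uniform majorant and the blockwise Fourier identity are in place, the conclusion follows at once.
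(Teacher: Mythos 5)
The identification of the jump measure and the Fourier computation on Schwartz functions are both correct: by symmetry of $\mu$ the compensator integrates to zero, the symmetrised second difference is controlled by $|y|^2\wedge 1$, and the substitution $u=2\pi k|\langle z,\xi\rangle|$ gives exactly $\psi(z)=(2\pi)^\alpha c_\alpha\int_S|\langle z,\xi\rangle|^\alpha\tilde\nu(d\xi)$, so matching with $\psi^\alpha_\nu$ yields $\tilde\nu=C\nu$ with $C=1/((2\pi)^\alpha c_\alpha)$. That part is clean.

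The extension to $C^\infty_b$ has a genuine gap. You assert that each Littlewood--Paley block $\Delta_j\varphi$ "has compactly supported Fourier transform and is therefore Schwartz," and you use this to import the Fourier argument blockwise. This is false: for $\varphi\in C^\infty_b$ the Fourier transform $\hat\varphi$ is merely a tempered distribution, so $p_j\hat\varphi$ is a compactly supported distribution and $\Delta_j\varphi$ is a bounded analytic function of polynomial growth, not a Schwartz function. Already $\varphi(x)=\cos(2\pi x)$ gives $\Delta_j\varphi=\varphi$ for the relevant $j$, which has compactly supported Fourier transform yet is not in $\mathcal S$; similarly the sinc function has $\hat\varphi=\mathbf 1_{[-1,1]}$ but decays only like $1/|x|$. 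So the blockwise identity $A\Delta_j\varphi=-\La\Delta_j\varphi$ is not covered by the Schwartz-function computation and the dominated-convergence passage that follows rests on an unestablished premise. The cure is not to try to put $\Delta_j\varphi$ into $\mathcal S$, but either to establish the distributional Fourier identity $\widehat{A\psi}=-\psi^\alpha_\nu\hat\psi$ directly for bounded smooth $\psi$ with compactly supported $\hat\psi$ (by pairing $A\psi$ against a Schwartz test function and applying Fubini, using the $|y|^2\wedge 1$ majorant), or, more cleanly, to observe that both $A$ and $\La$ are continuous from $\calC^\gamma$ to $\calC^{\gamma-\alpha}$ (for $\La$ this is \cref{prop:contfl}, for $A$ an analogous direct estimate), that $C^\infty_b\subset\calC^\gamma$ for any $\gamma$, and that $\mathcal S$ (indeed $C^\infty_b$) is dense in the separable Besov space of \eqref{eq:sep}, so the identity on $\mathcal S$ extends by continuity.
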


\noindent If $\alpha=2$, then the generator of the symmetric, $\alpha$-stable process coincides with $\sum_{i,j}C(i,j)\partial_{x_{i}}\partial_{x_{j}}$ for an explicit covariance matrix $C$ (cf.~\cite[Theorem 14.2]{Sato1999}), that is, the generator of $\sqrt{2C}B$ for a standard Brownian motion $B$. To ease notation, we consider here $C=\frac{1}{2}\operatorname{Id}_{d\times d}$ and whenever we refer to the case $\alpha=2$, we mean the standard Brownian motion noise case and $\La=-\frac{1}{2}\Delta$.

\begin{assumption}\label{ass}
Throughout the paper, we assume that the measure $\nu$ from \cref{def:fl} has $d$-dimensional support, in the sense that the linear span of its support is $\R^d$. This means that the process $L$ can reach every open set in $\R^d$ with positive probability.
\end{assumption}
\noindent An $\alpha$-stable, symmetric Lévy process, that satisfies \cref{ass}, we also call non-degenerate.

\begin{proposition}[Continuity of the operator $\La$]\label{prop:contfl}\footnote{\cite[Proposition 2.4]{kp-sk}}
Let $\alpha\in (0,2]$. Then for $\beta\in\R$ and $u \in C^\infty_b$, $p\in[1,\infty]$, we have
\begin{align*}
\norm{\La u}_{\calC^{\beta-\alpha}_{p}}\lesssim\norm{u}_{\calC^{\beta}_{p}}.
\end{align*}
In particular, $\La$ can be uniquely extended to a continuous operator from $\calC^{\beta}_{p}$ to $\calC^{\beta-\alpha}_{p}$.
\end{proposition}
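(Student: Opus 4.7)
The plan is to prove the inequality for smooth $u \in C^\infty_b$ via a Littlewood--Paley decomposition, splitting the blocks into high frequency ($j\geq 0$) and low frequency ($j = -1$), and then extend by the density of $C^\infty_b$ in the separable space $\calC^\beta_p$ defined in \eqref{eq:sep}. Since $\La$ is a Fourier multiplier with symbol $\psi^\alpha_\nu$ that is homogeneous of degree $\alpha$ and $C^\infty$ on $\R^d\setminus\{0\}$, the high frequency blocks will behave like a classical symbol of order $\alpha$. The hard part is the low frequency block $j=-1$, where the non-smoothness of $\psi^\alpha_\nu$ at the origin breaks any naive scaling/Fourier-multiplier argument and forces a separate treatment.

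For $j\geq 0$, I would pick the enlarged cutoff $\tilde p_j := p_{j-1}+p_j+p_{j+1}$, which equals $1$ on $\operatorname{supp}(p_j)$, and write
\begin{equation*}
\Delta_j(\La u) \;=\; K_j \ast \tilde\Delta_j u,\qquad K_j := \mathcal{F}^{-1}(p_j\,\psi^\alpha_\nu).
\end{equation*}
Using $p_j(z)=p_0(2^{-j}z)$ together with the homogeneity $\psi^\alpha_\nu(2^j w)=2^{j\alpha}\psi^\alpha_\nu(w)$, a change of variables yields $K_j(x)=2^{j(d+\alpha)}K_0(2^j x)$ with $K_0:=\mathcal{F}^{-1}(p_0\,\psi^\alpha_\nu)$. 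Since $\operatorname{supp}(p_0)$ is an annulus bounded away from $0$, the product $p_0\,\psi^\alpha_\nu$ is smooth and compactly supported, so $K_0\in\mathcal{S}$, hence $\|K_j\|_{L^1}=2^{j\alpha}\|K_0\|_{L^1}$. Young's convolution inequality then gives
\begin{equation*}
2^{j(\beta-\alpha)}\|\Delta_j(\La u)\|_{L^p} \;\lesssim\; 2^{j\beta}\|\tilde\Delta_j u\|_{L^p} \;\lesssim\; \|u\|_{\calC^\beta_p},
\end{equation*}
uniformly in $j\geq 0$.

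For $j=-1$, I would exploit the support condition $\operatorname{supp}(p_{-1})\cap\operatorname{supp}(p_j)=\emptyset$ for $j\geq 2$ to write $\Delta_{-1}(\La u) = \Delta_{-1}\bigl(\La S_1 u\bigr)$ with $S_1 u:=\Delta_{-1}u+\Delta_0 u\in C^\infty_b$, whose Fourier support lies in a fixed ball $B_R$. For such a band-limited $v$, the Lévy generator representation \eqref{eq:functional} together with Minkowski's inequality yields
\begin{equation*}
\|\La v\|_{L^p} \;\leq\; \int_{|y|\leq 1}\tfrac{|y|^2}{2}\|D^2 v\|_{L^p}\,\mu(dy) \;+\; \int_{|y|>1}2\|v\|_{L^p}\,\mu(dy),
\end{equation*}
and Bernstein's inequality $\|D^2 v\|_{L^p}\lesssim R^2\|v\|_{L^p}$, combined with $\int_{|y|\leq 1}|y|^2\mu(dy)+\mu(\{|y|>1\})<\infty$ for $\alpha\in(0,2)$, gives $\|\La v\|_{L^p}\lesssim_R \|v\|_{L^p}$; for $\alpha=2$ one instead invokes $\La=-\tfrac12\Delta$ and Bernstein directly. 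Since $\Delta_{-1}$ is $L^p$-bounded and $\|S_1 u\|_{L^p}\lesssim\|u\|_{\calC^\beta_p}$, this yields $\|\Delta_{-1}(\La u)\|_{L^p}\lesssim\|u\|_{\calC^\beta_p}$. Taking the supremum over $j\geq -1$ proves $\|\La u\|_{\calC^{\beta-\alpha}_p}\lesssim\|u\|_{\calC^\beta_p}$ on $C^\infty_b$, and the unique continuous extension follows by density.
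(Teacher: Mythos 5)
Your argument for $j\geq 0$ has a genuine gap. The claim that $p_0\psi^\alpha_\nu$ is smooth, hence $K_0\in\mathcal S$, rests on $\psi^\alpha_\nu$ being $C^\infty$ on the annulus $\supp p_0$, and this is false for $d\geq 2$. The symbol $\psi^\alpha_\nu(z)=\int_S\abs{\langle z,\xi\rangle}^\alpha\nu(d\xi)$ inherits the finite differentiability of $z\mapsto\abs{\langle z,\xi\rangle}^\alpha$ across the hyperplanes $\{\langle z,\xi\rangle=0\}$ whenever $\nu$ charges points. Concretely, for $d=2$ and $\nu=\delta_{e_1}+\delta_{-e_1}$ (which is symmetric, finite, non-zero) one gets $\psi^\alpha_\nu(z)=2\abs{z_1}^\alpha$, which is only finitely many times differentiable across the line $z_1=0$, and that line passes through $\supp p_0$. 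So $K_0\notin\mathcal S$, and even the weaker statement $K_0\in L^1$ — the only thing Young's inequality actually needs — does not follow from the naive ``smooth with compact support'' reasoning once $d\geq 2$. Your argument for $j=-1$ and the density argument at the end are fine.

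The clean fix is to run your $j=-1$ argument at every scale, which removes the kernel analysis and the case split altogether. Since $\mu$ is symmetric the compensation vanishes, and \cref{rem:lgen} together with \eqref{eq:functional} gives for $v\in C^\infty_b$
\begin{align*}
\La v(x)=-\frac12\int_{\R^d\setminus\{0\}}\bigl(v(x+y)+v(x-y)-2v(x)\bigr)\mu(dy).
\end{align*}
Split at $\abs y=2^{-j}$. On the small jumps, the second-order Taylor remainder and Minkowski give $\norm{v(\cdot+y)+v(\cdot-y)-2v(\cdot)}_{L^p}\leq\abs y^2\norm{D^2v}_{L^p}$, and $\int_{\abs y\leq 2^{-j}}\abs y^2\mu(dy)\lesssim 2^{-j(2-\alpha)}$ by \eqref{eq:mu}; on the large jumps, bound by $4\norm v_{L^p}$ and use $\mu(\{\abs y>2^{-j}\})\lesssim 2^{j\alpha}$. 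Taking $v=\tilde\Delta_ju$ and applying Bernstein, $\norm{D^2\tilde\Delta_ju}_{L^p}\lesssim 2^{2j}\norm{\tilde\Delta_ju}_{L^p}$, gives
\begin{align*}
\norm{\La\tilde\Delta_ju}_{L^p}\lesssim\bigl(2^{-j(2-\alpha)}2^{2j}+2^{j\alpha}\bigr)\norm{\tilde\Delta_ju}_{L^p}\lesssim 2^{j\alpha}\norm{\tilde\Delta_ju}_{L^p},
\end{align*}
uniformly in $j\geq -1$ (with $R=1$ for $j=-1$, exactly as you wrote). Since $\Delta_j(\La u)=\Delta_j(\La\tilde\Delta_ju)$ and $\Delta_j$ is $L^p$-bounded uniformly in $j$, this yields $2^{j(\beta-\alpha)}\norm{\Delta_j(\La u)}_{L^p}\lesssim 2^{j\beta}\norm{\tilde\Delta_ju}_{L^p}\lesssim\norm u_{\calC^\beta_p}$, and the case $\alpha=2$ goes through by Bernstein on $\La=-\tfrac12\Delta$ as you noted.
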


\begin{remark}\label{rmk:A}
One can show that the operators $A$ and $-\La$ agree on $\bigcup_{\varepsilon>0}\calC^{2+\epsilon}$. 
\end{remark}

\noindent The formal generator associated to the diffusion \eqref{eq:intro-sde} is thus given by
\begin{align}\label{eq:gen-sde}
\mathcal{G}^{V}=\partial_{t}-\La+V\cdot \nabla.
\end{align}
The connection between the generator $\mathcal{G}^{V}$ and the solution $X$ is made rigorous in \cite{kp} by solving the associated martingale problem, that we recall below. Solving the martingale problem is based on solving the singular Kolmogorov backward PDE via the paracontrolled approach (in the rough regularity regime).
Before stating the result from \cite{kp-sk}, let us introduce the notation 
\begin{align*}
\mathcal{K}(\eta_{1},\eta_{2}):=\big[\mathring{\Delta}_{T}\ni(s,t)\mapsto \sum_{j=1}^{d}P_{t-s}\partial_{j}\eta_{1}^{i}(t)\reso\eta_{2}^{j}(s)\big],
\end{align*} for $\eta_{1},\eta_{2}\in C_{T}C^{\infty}_{b}(\R^{d},\R^{d})$ and $\mathring{\Delta}_{T}:=\{(s,t)\in [0,T]^{2}\mid s< t\}$. Furthermore, we set $\mathcal{K}(\eta):=\mathcal{K}(\eta,\eta)$. Let also $\Delta_{T}:=\{(s,t)\in [0,T]^{2}\mid s\leqslant t\}$.\\ 
For a Banach space $X$ and $\gamma\in (0,1)$, we define the blow-up space
\begin{align*}
\mathcal{M}_{\mathring{\Delta}_{T}}^{\gamma}X:=\{g:\mathring{\Delta}_{T}\to X\mid \sup_{0\leqslant s<t\leqslant T}(t-s)^{\gamma}\norm{g(s,t)}_{X}<\infty\}.
\end{align*}
Moreover, for a sequence $(a_{m,n})_{m,n\in\N}$ in a Banach space, for which the convergence 
\begin{align*}
\lim_{m\to\infty}\lim_{n\to\infty}a_{m,n}=a=\lim_{n\to\infty}\lim_{m\to\infty}a_{m,n}
\end{align*} holds, we use the short-hand notation $a^{m,n}\to a$ for $m,n\to\infty$ or $\lim_{m,n\to\infty}a^{m,n}=a$.\\
Next, we define the space of enhanced distributions $\mathcal{X}^{\beta,\gamma}$ as follows. Comparing with \cite[Definition 4.2]{kp-sk}, we furthermore assume that for $(\mathcal{V}_{1},\mathcal{V}_{2})\in\mathcal{X}^{\beta,\gamma}$, the mixed resonant products $(\mathcal{K}(V^{n},V^{m}))$ converge to $\calV_{2}$.
\begin{definition}[Enhancement] \label{def:enhanced-dist}
Let $T>0$. For $\beta\in (\frac{2-2\alpha}{3},\frac{1-\alpha}{2})$ and $\gamma\in [\frac{2\beta+2\alpha-1}{\alpha},1)$, we define $\mathcal{X}^{\beta,\gamma}$ as the closure of
\begin{align*}
\{(\eta_{1}, \mathcal{K}(\eta_{1},\eta_{2}))\mid \eta_{1},\eta_{2}\in C_{T}C^{\infty}_{b}(\R^d,\R^d)\}
\end{align*} in $C_{T}\calC^{\beta+(1-\gamma)\alpha}\times\mathcal{M}_{\mathring{\Delta}_{T}}\calC^{2\beta+\alpha-1}_{\R^{d\times d}}$. We say that $\calV$ is a lift or enhancement of $V$ if $\mathcal{V}_{1}=V$ and also shortly write $V\in\mathcal{X}^{\beta,\gamma}$.\\
For $\beta\in (\frac{1-\alpha}{2},0)$ and $\gamma\in [\frac{\beta-1}{\alpha},1)$, we set $\mathcal{X}^{\beta,\gamma}=C_{T}\calC^{\beta+(1-\gamma)\alpha}$.
\end{definition} 
\begin{remark}
$V\in\mathcal{X}^{\beta,\gamma}$ in the sense of \cref{def:enhanced-dist} implies that $V$ is also enhanced in the sense of \cite[Definition 4.2]{kp-sk} and in the sense of \cite[Definition 3.5]{kp}.
\end{remark}

\noindent Let us define the function spaces, that we encounter in the sequel.
For $\gamma\in (0,1)$, $T>0$ and $\overline{T}\in (0,T]$, and a Banach space $X$, we define the blow-up space
\begin{align*}
\mathcal{M}_{\overline{T},T}^{\gamma}X:=\{u:[T-\overline{T},T)\to X\mid t\mapsto (T-t)^{\gamma}u_{t}\in C([T-\overline{T},T),X)\},
\end{align*} with $\norm{u}_{\mathcal{M}_{\overline{T},T}^{\gamma}X}:=\sup_{t\in [T-\overline{T},T)}(T-t)^{\gamma}\norm{u_t}_{X}$ and $\mathcal{M}_{\overline{T},T}^{0}X:=C([T-\overline{T},T),X)$. For $\overline{T}=T$, we use the notation $\mathcal{M}^{\gamma}_{T}X:=\mathcal{M}_{T,T}^{\gamma}X$. For $\vartheta\in (0,1]$, $\gamma\in (0,1)$, we furthermore define
\begin{align*}
C_{\overline{T},T}^{\gamma,\vartheta}X:=\biggl\{u:[T-\overline{T},T)\to X\biggm| \norm{f}_{C_{T}^{\gamma,\vartheta}X}:=\sup_{0\leqslant s<t< T}\frac{(T-t)^{\gamma}\norm{f_{t}-f_{s}}_{X}}{\abs{t-s}^{\vartheta}}<\infty\biggr\}
\end{align*} and $C_{T}^{\gamma,\vartheta}X:=C_{T,T}^{\gamma,\vartheta}X$. 
Let us also define for $\vartheta\in (0,1]$, $\overline{T}\in (0,T]$, the space of $\vartheta$-Hölder continuous functions on $[T-\overline{T},T]$ with values in $X$,
\begin{align*}
C_{\overline{T},T}^{\vartheta}X:=\biggl\{u:[T-\overline{T},T]\to X\biggm| \norm{u}_{C_{T}^{\vartheta}X}:=\sup_{T-\overline{T}\leqslant s<t\leqslant T}\frac{\norm{u_{t}-u_{s}}_{X}}{\abs{t-s}^{\vartheta}}<\infty\biggr\} 
\end{align*} and $C_{T}^{\vartheta}X:=C_{T,T}^{\vartheta}X$. We set $C_{\overline{T},T}^{0,\vartheta}X:=C^{\vartheta}([T-\overline{T},T),X)$.\\
Then we define for $\gamma\in (0,1)$ and $\theta\in \R$, $p\in[1,\infty]$, the space
\begin{align}
\mathcal{L}_{T}^{\gamma,\theta}:=\mathcal{M}_{T}^{\gamma}\calC^{\theta}_{p}\cap C_{T}^{1-\gamma}\calC^{\theta-\alpha}_{p}\cap C_{T}^{\gamma,1}\calC^{\theta-\alpha}_{p}.
\end{align} 
We moreover define for $\gamma=0$, 
\begin{align}\label{eq:gamma0}
\mathcal{L}_{T}^{0,\theta}:= C_{T}^{1}\calC^{\theta-\alpha}_{p}\cap C_{T}\calC^{\theta}_{p},
\end{align} where $C^{1}_{T}X$ denotes the space of $1$-Hölder or Lipschitz functions with values in $X$.\\ 
For $\overline{T}\in (0,T)$, we define $\mathcal{L}_{\overline{T},T}^{\gamma,\theta}:=\mathcal{M}_{\overline{T},T}^{\gamma}\calC^{\theta}_{p}\cap C_{\overline{T},T}^{1-\gamma}\calC^{\theta-\alpha}_{p}\cap C_{\overline{T},T}^{\gamma,1}\calC^{\theta-\alpha}_{p}$ and similarly $\mathcal{L}_{\overline{T},T}^{0,\theta}$.\\
The spaces $\mathcal{L}_{T}^{\gamma,\theta}$ are Banach spaces equipped with the norm
\begin{align*}
\norm{u}_{\mathcal{L}_{T}^{\gamma,\theta}}&:=\norm{u}_{\mathcal{M}_{T}^{\gamma}\calC^{\theta}_{p}}+\norm{u}_{C_{T}^{1-\gamma}\calC^{\theta-\alpha}_{p}} +\norm{u}_{C_{T}^{\gamma,1}\calC^{\theta-\alpha}_{p}}\\&=\squeeze[1]{\sup_{t\in[0,T)}(T-t)^{\gamma}\norm{u_{t}}_{\calC^{\theta}_{p}}+\sup_{0\leqslant s<t\leqslant T}\frac{\norm{u_{t}-u_{s}}_{\calC^{\theta-\alpha}_{p}}}{\abs{t-s}^{1-\gamma}}+\sup_{0\leqslant s<t< T}\frac{(T-t)^{\gamma}\norm{u_{t}-u_{s}}_{\calC^{\theta-\alpha}_{p}}}{\abs{t-s}}.}
\end{align*}

\noindent We can finally recall the result from \cite[Theorem 4.1, Theorem 4.7, Corollary 4.10 and Theorem 4.12]{kp-sk}, which treat singular paracontrolled terminal conditions. 

\begin{theorem}\label{thm:singular}
Let $T>0$, $\alpha\in (1,2]$, $p\in[1,\infty]$ and $\beta\in (\frac{2-2\alpha}{3},0)$ and $\mathcal{V}\in\calX^{\beta,\gamma'}$.\\
For $\beta\in (\frac{1-\alpha}{2},0), \gamma'\in [\frac{\beta-1}{\alpha},1)$, let $f\in \mathcal{L}_{T}^{\beta,\gamma'}$ and $u^{T}\in\calC_{p}^{(1-\gamma')\alpha+\beta}$.\\ For $\beta\in (\frac{2-2\alpha}{3},\frac{1-\alpha}{2}], \gamma'\in[\frac{2\beta+2\alpha-1}{\alpha},1) $, let  
\begin{align*}
f=f^{\sharp}+f'\para V
\end{align*} for $f^{\sharp}\in \mathcal{L}_{T}^{\gamma',\alpha+2\beta-1}$, $f'\in (\mathcal{L}_{T}^{\gamma',\alpha+\beta-1})^{d}$ and 
\begin{align*}
u^{T}=u^{T,\sharp}+u^{T,\prime}\para V_{T}
\end{align*} for $u^{T,\sharp}\in \calC^{(2-\gamma')\alpha+2\beta-1}_p$, $u^{T,\prime}\in (\calC^{\alpha+\beta-1}_p)^{d}$.\\
Then there exists a unique mild solution $u\in\mathcal{L}_{T}^{\gamma,\alpha+\beta}$ for $\gamma\in (\gamma',\frac{\alpha}{2-\alpha-3\beta}\gamma')$ to the 
singular Kolmogorov backward PDE
\begin{align*}
\mathcal{G}^{\mathcal{V}}u=f,\qquad u(T,\cdot)=u^{T}.
\end{align*}
and if $\beta\in (\frac{2-2\alpha}{3},\frac{1-\alpha}{2}]$, $u$ has the paracontrolled structure
\begin{align*}
u=u^{\sharp}+\nabla u\para J^{T}(V)+u^{T,\prime}\para P_{T-\cdot}V_{T}
\end{align*} for $u^{\sharp}\in\mathcal{L}_{T}^{\gamma,2\alpha+2\beta-1}$.\\
Furthermore in case of regular terminal condition, that is $f'\in \mathcal{L}_{T}^{0,\alpha+\beta-1}, f^{\sharp}\in\mathcal{L}_{T}^{0,\alpha+2\beta-1]}$ and $u^{T}=u^{T,\sharp}\in\calC_{p}^{2\alpha+2\beta-1}$, the solution satisfies $u\in\mathcal{L}_{T}^{0,\alpha+\beta}, u^{\sharp}\in\mathcal{L}_{T}^{0,2\alpha+2\beta-1}$.\\
Moreover, the solution map $(u^{T},f,\calV)\mapsto (u,u^{\sharp})\in \mathcal{L}_{T}^{\gamma,\alpha+\beta}\times \mathcal{L}_{T}^{\gamma,2\alpha+2\beta-1} $ is locally Lipschitz continuous. 
\end{theorem}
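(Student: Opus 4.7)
The plan is to reduce the PDE $\mathcal{G}^{\mathcal{V}}u = f$ with terminal data $u^T$ to its mild formulation
\begin{equation*}
u_t = P_{T-t}u^T + \int_t^T P_{s-t}\paren[\big]{\mathcal{V}\cdot\nabla u_s - f_s}\,\mathrm{d}s,
\end{equation*}
and to solve it by a Banach fixed-point argument in the space $\mathcal{L}_T^{\gamma,\alpha+\beta}$ (locally in $T$, if needed). The Schauder estimates for $(P_t)$ from \cref{prop:contfl} give that $P_t : \calC_p^{\theta}\to\calC_p^{\theta+\alpha}$ with the correct $t^{-1}$-type blow-up, and this regularization is what absorbs the $\alpha$-loss when convolving $\mathcal{V}\cdot\nabla u$ with the semigroup. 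The two regimes differ only in how the singular product $\mathcal{V}\cdot\nabla u$ is interpreted.

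In the Young regime $\beta\in(\tfrac{1-\alpha}{2},0)$ one has $\beta+(\alpha+\beta-1)>0$, so by the paraproduct estimates~\eqref{eq:paraproduct-estimates} the product $\mathcal{V}\cdot\nabla u$ is defined in $\calC_p^{2\beta+\alpha-1}$ and depends continuously on $\mathcal{V}$ and $u$. Combining this with Schauder and with the Hölder-in-time bound on $s\mapsto P_{\cdot-s}(\mathcal{V}_s\cdot\nabla u_s)$ closes the contraction in $\mathcal{L}_T^{\gamma,\alpha+\beta}$ for small $T$, and standard iteration gives the solution on $[0,T]$. The local Lipschitz dependence follows from multilinearity.

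In the rough regime $\beta\in(\tfrac{2-2\alpha}{3},\tfrac{1-\alpha}{2}]$ the resonant piece $\nabla u\reso V$ is not classically defined, so I make the paracontrolled ansatz
\begin{equation*}
u = u^{\sharp} + \nabla u\para J^{T}(V) + u^{T,\prime}\para P_{T-\cdot}V_T,
\end{equation*}
where $J^T(V)$ is the mild solution of $\mathcal{G}^0 J^T(V)=V$ with zero terminal condition so that $\nabla J^T(V)\in C_T\calC_p^{\alpha+\beta-1}$. Substituting the ansatz in $\nabla u\reso V$ and applying the paracontrolled commutator lemma isolates the only ill-defined contribution as $\nabla u\cdot\bigl(\nabla J^T(V)\reso V\bigr)$, whose meaning is postulated via the enhancement $\calV_2$ obtained as a limit of the smooth $\mathcal{K}(V^n)$ appearing in \cref{def:enhanced-dist}; the $\calV_2$-compatible regularization of the terminal paraproduct $u^{T,\prime}\para P_{T-\cdot}V_T$ is handled analogously using $\mathcal{K}(V,V_T)$. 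Every remaining term has a positive sum of Hölder exponents, so the Schauder and paraproduct bounds close into a fixed-point equation for $u^{\sharp}$ in $\mathcal{L}_T^{\gamma,2\alpha+2\beta-1}$. Existence is then obtained by smooth approximation along sequences $\calV^n\to\calV$ in $\mathcal{X}^{\beta,\gamma'}$, using that the solution map is uniformly Lipschitz on the approximating sequence.

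The main obstacle is balancing the blow-up exponent $\gamma$ near $t=T$: one needs $\gamma>\gamma'$ in order that the paracontrolled terminal data, which has effective regularity $(1-\gamma')\alpha+\beta$ after one application of $P_{T-t}$, can be absorbed, while simultaneously $\gamma<\tfrac{\alpha}{2-\alpha-3\beta}\gamma'$ is required so that the Schauder gain on the nonlinear term $\mathcal{V}\cdot\nabla u$ dominates the blow-up introduced by $\nabla u\in\mathcal{M}_T^{\gamma}\calC_p^{\alpha+\beta-1}$. Verifying the three seminorms defining $\mathcal{L}_T^{\gamma,\alpha+\beta}$ (the weighted sup-norm, the global $(1-\gamma)$-Hölder norm in $\calC_p^{\theta-\alpha}$ and the weighted $1$-Hölder norm) simultaneously, with the correct constants for the commutator remainders, is the technically heaviest step. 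For regular terminal data $u^{T,\prime}=0$ and $f^\sharp,f'$ without blow-up, one rechecks the same estimates with $\gamma=0$ in \eqref{eq:gamma0}, and the mild formula shows that the map $t\mapsto u_t$ extends continuously up to $t=T$, giving $u\in\mathcal{L}_T^{0,\alpha+\beta}$ and $u^{\sharp}\in\mathcal{L}_T^{0,2\alpha+2\beta-1}$. Local Lipschitz continuity of $(u^T,f,\calV)\mapsto(u,u^{\sharp})$ then follows from the same multilinear estimates applied to differences.
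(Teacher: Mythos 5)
The paper does not prove \cref{thm:singular}: it is restated from \cite[Theorem 4.1, Theorem 4.7, Corollary 4.10, Theorem 4.12]{kp-sk}, so there is no in-paper proof to compare your proposal against. Your sketch (mild formulation, Banach fixed point in the blow-up space $\mathcal{L}_T^{\gamma,\alpha+\beta}$, paracontrolled ansatz in the rough regime) is the standard paracontrolled strategy and almost certainly matches the overall architecture of the proof in \cite{kp-sk}.

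That said, as written the proposal is a roadmap rather than a proof: the technical core --- closing all three seminorms defining $\mathcal{L}_T^{\gamma,\alpha+\beta}$ under the fixed-point map, the commutator estimates that transfer regularity to the remainder $u^\sharp$, and the derivation of the admissible range $\gamma\in(\gamma',\frac{\alpha}{2-\alpha-3\beta}\gamma')$ --- is asserted rather than carried out. One concrete inaccuracy worth fixing: the enhancement $\calV_2$ of \cref{def:enhanced-dist} is the limit of the two-parameter kernel $\mathcal{K}(\eta_1,\eta_2)(s,t)=\sum_{j}P_{t-s}\partial_{j}\eta_{1}^{i}(t)\reso\eta_{2}^{j}(s)$, not the single-time resonant product $\nabla J^T(V)\reso V$ that you write. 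The two are related after integrating in time, but the two-parameter form, living in the blow-up space $\mathcal{M}_{\mathring{\Delta}_T}^{\gamma}\calC^{2\beta+\alpha-1}_{\R^{d\times d}}$, is precisely what permits handling the singular paracontrolled terminal condition $u^{T,\prime}\para V_T$; this terminal-condition regime is the novel contribution of \cite{kp-sk} and is the part of your outline that is vaguest.
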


\noindent We use the definition of a solution to the singular martingale problem from \cite[Definition 4.1]{kp}, cf. also \cite{Cannizzaro2018}.

\begin{definition}[Martingale problem]\label{def:martp}
Let $\alpha\in (1,2]$ and $\beta\in(\frac{2-2\alpha}{3},0)$, and let $T>0$ and $V\in \mathcal{X}^{\beta,\gamma}$. Then, we call a probability measure $\p$ on the Skorokhod space $(\Omega,\mathcal{F})$ a solution of the martingale problem for $(\mathcal{G}^{V},\delta_x)$, if
\begin{enumerate}
\item[\textbf{1.)}] $\p(X_{0}\equiv x)=1$ (i.e. $\p^{X_{0}}=\delta_{x}$), and
\item[\textbf{2.)}] for all $f\in C_{T}\calC^{\epsilon}$ with $\varepsilon > 2-\alpha$ and for all $u^{T}\in\mathcal{C}^{3}$, the process $M=(M_{t})_{t\in [0,T]}$ is a martingale under $\p$ with respect to $(\mathcal{F}_{t})$, where
\begin{align}
M_{t}=u(t,X_{t})-u(0,x)-\int_{0}^{t}f(s,X_{s})ds
\end{align} and where $u$ solves the Kolmogorov backward equation $\mathcal{G}^{V}u=f$ with terminal condition $u(T,\cdot)=u^{T}$.
\end{enumerate} 
\end{definition}
\begin{remark}
To solve the martingale problem, we do not need to solve the Kolmogorov equation for \textit{singular} terminal conditions. Instead it suffices to consider regular terminal conditions $u^{T}\in\mathcal{C}^{3}$, cf.~also \cite{kp}. To prove the equivalence of rough weak solutions and martingale solutions, we will employ \cref{thm:singular} in its full generality.
\end{remark}

\noindent We state a version of \cite[Theorem 4.2]{kp} about the existence and uniqueness of martingale solutions and \cite[Corollary 4.5]{kp} concerning moment estimates for the drift term.
\begin{theorem}\label{thm:mainthm1}\footnote{The statement of the theorem holds also under the weaker enhancement assumption on $V$ from \cite[Definition 3.5]{kp}, cf.~\cite[Theorem 4.2]{kp}.}
Let $\alpha\in (1,2]$ and $L$ be a symmetric, $\alpha$-stable L\'evy process, such that the measure $\nu$ satisfies \cref{ass}. Let $T>0$ and $\beta\in ((2-2\alpha)/3,0)$ and let $V\in\calX^{\beta,\gamma}$. Then for all $x\in\R^{d}$, there exists a unique solution $\mathbb{Q}$ on $(\Omega,\mathcal F)$ of the martingale problem for $(\mathcal{G}^{V},\delta_x)$. Under $\mathbb{Q}$ the canonical process is a strong Markov process.
\end{theorem}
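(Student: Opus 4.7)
The plan is to follow the classical martingale problem strategy, decoupled into existence, uniqueness, and the strong Markov property, with the singular Kolmogorov PDE of \cref{thm:singular} doing the heavy lifting throughout. For \emph{existence}, I would first approximate the enhanced drift by smooth objects: choose $(V^n)\subset C_T C^\infty_b(\R^d,\R^d)$ with $(V^n,\mathcal{K}(V^n))\to \calV$ in $\calX^{\beta,\gamma}$, which is possible by the very definition of $\calX^{\beta,\gamma}$ as a closure. For each $n$, the SDE $dX^n_t=V^n(t,X^n_t)dt+dL_t$ admits a strong solution with law $\mathbb{Q}^n$ on the Skorokhod space, and $M^n_t:=u^n(t,X^n_t)-u^n(0,x)-\int_0^t f(s,X^n_s)ds$ is a genuine $\mathbb{Q}^n$-martingale for every test pair $(f,u^T)$ with $f\in C_T\calC^{\epsilon}$, $\epsilon>2-\alpha$, and $u^T\in\calC^3$, where $u^n$ is the classical solution of $\mathcal{G}^{V^n}u^n=f$ with $u^n(T,\cdot)=u^T$.

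For \emph{tightness} of $(\mathbb{Q}^n)$ I would isolate the drift part $Z^n:=X^n-x-L$ and control its modulus of continuity uniformly in $n$. The standard Zvonkin-style trick is to choose, for fixed $\lambda>0$, a vector-valued $u^n$ solving $(\partial_t-\La+V^n\cdot\nabla-\lambda)u^n=V^n$ with zero terminal condition; then \cref{thm:singular} (in its regular-terminal-condition part) and the Lipschitz continuity of the solution map yield $u^n$ uniformly bounded in $\mathcal{L}_T^{0,\alpha+\beta}$ and converging to the corresponding $u$ for $V$. Writing $V^n(s,X^n_s)ds$ through Itô's formula against $u^n(s,X^n_s)$ rewrites $Z^n_t-Z^n_s$ in terms of $u^n$-increments, the compensated Poisson integral, and a bounded Riemann integral, giving Hölder-type moment bounds of order $(\alpha+\beta)/\alpha>1/\alpha$ uniformly in $n$. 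These bounds, combined with the tightness of the stable driver $L$, imply tightness via the Aldous--Kurtz criterion. Along any convergent subsequence $\mathbb{Q}^{n_k}\to\mathbb{Q}$, the continuity of the PDE map from \cref{thm:singular} yields $u^{n_k}\to u$ in $C_TC_b$ (by Sobolev-type embedding of $\mathcal{L}_T^{0,\alpha+\beta}$ for $\alpha+\beta>0$), so one passes to the limit in $M^{n_k}$ to conclude that $\mathbb{Q}$ solves the martingale problem for $(\mathcal{G}^{\calV},\delta_x)$.

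For \emph{uniqueness in law}, let $\mathbb{Q}$ be any solution. Given $\varphi\in\calC^3$ and $0\leqslant s\leqslant T$, apply the martingale property with terminal condition $u^T=\varphi$ on $[s,T]$ and right-hand side $f\equiv 0$: the resulting martingale identity yields
\begin{align*}
\mathbb{E}_{\mathbb{Q}}[\varphi(X_T)\mid\mathcal{F}_s]=u(s,X_s),
\end{align*}
where $u$ is the unique PDE solution from \cref{thm:singular} and hence independent of $\mathbb{Q}$. This determines the one-dimensional marginal of $X_T$ under $\mathbb{Q}_x$ and, letting $T$ and $\varphi$ vary, all one-dimensional marginals of the canonical process. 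Iterating through a time grid $0=t_0<t_1<\cdots<t_n=T$ with consecutive PDE solutions on each $[t_{i-1},t_i]$ and using the tower property determines all finite-dimensional distributions, which pins down $\mathbb{Q}$ on the Skorokhod space.

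The same identity, applied at a bounded stopping time $\tau$ via optional stopping (which is valid since $M$ is bounded thanks to $u\in C_TC_b$), yields $\mathbb{E}_{\mathbb{Q}}[\varphi(X_{\tau+h})\mid\mathcal{F}_\tau]=u_h(X_\tau)$ for an appropriate deterministic function $u_h$. This gives the \emph{strong Markov property} and identifies the transition semigroup. The anticipated main obstacle is uniform tightness of $(Z^n)$: the drift $V^n$ is only a distribution in the limit, so naive Kolmogorov-type estimates fail, and one must truly exploit the Itô/PDE reformulation together with the locally Lipschitz continuity of the singular PDE solution map from \cref{thm:singular} to bound the drift moments uniformly across the approximation.
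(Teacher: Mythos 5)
This theorem is cited from \cite[Theorem 4.2]{kp} rather than proven in the present paper, so there is no in-paper proof to compare against; the passing references to its argument in the proofs of \cref{thm:wsimp} and \cref{thm:mpiws} (strong solutions $X^n$, Itô's formula for $u^n$, the distributional convergence $(X^n,Z^{n,n})\Rightarrow(X,Z)$) confirm that your strategy — smooth approximation, PDE-derived uniform Hölder-moment bounds on the drift to get tightness, passage to the limit in the martingale identity, and PDE duality for uniqueness — is essentially the one followed in \cite{kp}. Two small caveats. The $\lambda$-resolvent modification you introduce is unnecessary: \cite{kp} and \cref{lem:hd} work directly with $\mathcal{G}^{V^n}u^n=V^n$ (or $=f$), and the uniform moment bound on $Z^n$ comes from $u^n\in\mathcal{L}_T^{0,\alpha+\beta}$ (hence $u^n\in C_T^{(\alpha+\beta)/\alpha}L^{\infty}$ by interpolation) combined with the Burkholder--Davis--Gundy inequality for the compensated jump martingale. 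And your strong Markov argument is too terse: optional stopping of a single bounded martingale does not by itself yield the strong Markov property; one must show that a regular conditional probability given $\F_\tau$ again solves the time-shifted martingale problem and then reapply the uniqueness just established, in the Stroock--Varadhan fashion.
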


\begin{lemma}\label{lem:hd}
In the setting of Theorem~\ref{thm:mainthm1}, let $(V^{n})_{n \in \N} \subset C_{T}C^{\infty}_{b}(\R^d,\R^d)$ be a smooth approximation with $(V^{n},\calK(V^{n}))\to \mathcal{V}$ in $\calX^{\beta,\gamma}$. Let $(X^{n}_{t})_{t\in [0,T]}$ be the strong solution of the SDE
\begin{align*}
dX_{t}^{n}=V^{n}(t,X^{n}_{t})dt+dL_{t},\qquad X_{0}=x\in \R^{d}.
\end{align*}
Let\footnote{In \cite[Corollary 4.5]{kp}, the result was proven for $\theta<\alpha+\beta$, with $\theta$ being the regularity of the PDE solution from \cite[section 3]{kp}. Due to \cref{thm:singular}, we can indeed generalize to $\theta=\alpha+\beta$.} $\theta=\alpha+\beta$ and $\rho\in 2\N$.\\ Then, there exists $N\in\N$, such that uniformly in
$0\leqslant r \leqslant t \leqslant T$:
\begin{align}\label{eq:h1}
\sup_{n\geqslant N}\E\bigg[\abs[\bigg]{\int_{r}^{t}V^{n}(s,X^{n}_{s})ds}^{\rho}\bigg]\lesssim\abs{t-r}^{\theta\rho/\alpha}.
\end{align}
\end{lemma}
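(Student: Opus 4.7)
The plan is to follow the scheme of \cite[Corollary 4.5]{kp}: rewrite the drift integral, via the associated backward Kolmogorov PDE and Itô's formula, as a difference of a smooth functional of the Markov process plus a martingale, and then estimate each contribution in $L^{\rho}(\p)$. The only novelty compared to \cite{kp} is that the sharper PDE regularity furnished by \cref{thm:singular} now allows the endpoint exponent $\theta=\alpha+\beta$ (previously only $\theta<\alpha+\beta$ was available).

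For each component $i\in\{1,\dots,d\}$ and each $n$, I would let $u^{n,i}$ be the unique mild solution, granted by the regular-terminal-condition case of \cref{thm:singular}, of $\mathcal{G}^{V^{n}}u^{n,i}=V^{n,i}$ with $u^{n,i}(T,\cdot)=0$. This gives $u^{n,i}\in\mathcal{L}_{T}^{0,\theta}$, and combining the local Lipschitz continuity of the solution map in $\calV$ with the convergence $(V^{n},\calK(V^{n}))\to\calV$ in $\calX^{\beta,\gamma}$ yields the $n$-uniform bound $\sup_{n\geqslant N}\|u^{n,i}\|_{\mathcal{L}_{T}^{0,\theta}}<\infty$ for some $N\in\N$. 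Since $\alpha>1$ and $\beta>(2-2\alpha)/3$ imply $\theta>(\alpha+2)/3>1$, $u^{n,i}$ is $C^{1}$ in space, so Itô's formula applies to $s\mapsto u^{n,i}(s,X^{n}_{s})$ along $dX^{n}_{s}=V^{n}(s,X^{n}_{s})\,ds+dL_{s}$. Exploiting $\mathcal{G}^{V^{n}}u^{n,i}=V^{n,i}$, this produces the key identity
\begin{equation*}
\int_{r}^{t}V^{n,i}(s,X^{n}_{s})\,ds=u^{n,i}(t,X^{n}_{t})-u^{n,i}(r,X^{n}_{r})-M^{n,i}_{r,t},
\end{equation*}
with $M^{n,i}$ the true martingale generated by the driving noise (a Brownian integral of $\nabla u^{n,i}$ if $\alpha=2$, a compensated Poisson integral of the jump increments of $u^{n,i}$ if $\alpha<2$).

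Taking $\rho$-th moments, the contribution $u^{n,i}(t,X^{n}_{t})-u^{n,i}(r,X^{n}_{r})$ is split as $[u^{n,i}(t,X^{n}_{t})-u^{n,i}(r,X^{n}_{t})]+[u^{n,i}(r,X^{n}_{t})-u^{n,i}(r,X^{n}_{r})]$. The first bracket is controlled by the uniform Hölder-in-time estimate $\|u^{n,i}(t,\cdot)-u^{n,i}(r,\cdot)\|_{L^{\infty}}\lesssim|t-r|^{\theta/\alpha}$ extracted from the mild form via a semigroup/Schauder argument; the second by the $\calC^{\theta}$-regularity of $u^{n,i}(r,\cdot)$ together with a bootstrap on the increments of $X^{n}=x+\int_{0}^{\cdot}V^{n}(s,X^{n}_{s})\,ds+L$, using the $\alpha$-stable scaling $\E[|L_{h}|^{q}]\lesssim h^{q/\alpha}$ for $q<\alpha$ and the inductive hypothesis on $J^{n}_{r,t}:=\int_{r}^{t}V^{n}(s,X^{n}_{s})\,ds$. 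The martingale part is handled by BDG, which in the Brownian case gives $\E[|M^{n,i}_{r,t}|^{\rho}]\lesssim\|\nabla u^{n,i}\|_{\infty}^{\rho}(t-r)^{\rho/2}$, and in the pure-jump case is bounded via the stable-jump-measure integrability together with the uniform boundedness and Hölder regularity of $u^{n,i}$. Induction on $\rho\in2\N$ closes the bootstrap and yields $\E[|J^{n}_{r,t}|^{\rho}]\lesssim|t-r|^{\theta\rho/\alpha}$.

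The main obstacle is to extract, uniformly in $n$, the sharp Hölder-in-time estimate $\|u^{n,i}(t,\cdot)-u^{n,i}(r,\cdot)\|_{L^{\infty}}\lesssim|t-r|^{\theta/\alpha}$: the space $\mathcal{L}_{T}^{0,\theta}$ only directly provides Lipschitz-in-time regularity with values in the distribution space $\calC^{\theta-\alpha}=\calC^{\beta}$, so one must interpolate against the uniform $C_{T}\calC^{\theta}$-bound through a semigroup argument applied to the mild formulation. This interpolation is precisely the step where \cref{thm:singular}, by granting the endpoint spatial regularity $\theta=\alpha+\beta$, enables the improvement from the strict inequality $\theta<\alpha+\beta$ of \cite[Corollary 4.5]{kp} to the sharp endpoint exponent.
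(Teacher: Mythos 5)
Your scheme has the right skeleton (Kolmogorov PDE, Itô formula, moment estimates uniform in $n$), but the choice of terminal time $T$ for $u^{n,i}$ introduces a gap that the subsequent estimates cannot repair: after splitting $\int_r^t V^{n,i}(s,X^n_s)\,ds = \big(u^{n,i}(t,X^n_t)-u^{n,i}(r,X^n_r)\big)-M^{n,i}_{r,t}$ and bounding the two pieces \emph{separately}, neither piece satisfies the claimed rate $|t-r|^{\theta\rho/\alpha}$. In the Brownian case the BDG bound you quote gives only $\E[|M^{n,i}_{r,t}|^{\rho}]\lesssim \|\nabla u^{n,i}\|_{\infty}^{\rho}|t-r|^{\rho/2}$, but $\theta/\alpha=(\alpha+\beta)/\alpha>(\alpha+2)/(3\alpha)>1/2$ throughout the assumed parameter range, so $|t-r|^{\rho/2}$ is strictly weaker than the target $|t-r|^{\theta\rho/\alpha}$. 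Similarly, the spatial Taylor remainder of $u^{n,i}(r,X^n_t)-u^{n,i}(r,X^n_r)$ is of order $\min\big(|X^n_t-X^n_r|^{\theta},\|u^{n,i}\|_{\infty}\big)$; since $\theta\rho>\alpha$ for $\rho\geqslant 2$, its $\rho$-th moment (against the stable increment) decays only like $|t-r|$, again worse than $|t-r|^{\theta\rho/\alpha}$ because $\theta\rho/\alpha>1$. The necessary cancellation between the function-increment term and $M^{n,i}_{r,t}$ is present in the Itô identity but is destroyed by bounding the two separately.

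The fix used in the paper (and in the cited [kp, Corollary 4.5]) is to take the Kolmogorov solution with \emph{terminal time $t$}, i.e.\ $u^{t,n,i}$ solving $\mathcal{G}^{V^n}u^{t,n,i}=V^{n,i}$, $u^{t,n,i}(t,\cdot)=0$; cf.\ the PDEs \eqref{eq:u}, the class $\mathcal{K}^{\vartheta}$ in Definition~\ref{def:class}, and the bound \eqref{eq:Z-infty-bound}. Then $u^{t,n,i}(t,X^n_t)=0$, so
\begin{align*}
\int_r^t V^{n,i}(s,X^n_s)\,ds = -\,u^{t,n,i}(r,X^n_r) - M^{t,n,i}_{r,t},
\end{align*}
and now both pieces are estimated with the \emph{decay towards $t$}. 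The Hölder-in-time bound from $\mathcal{L}_t^{0,\theta}$ and zero terminal condition gives $\|u^{t,n,i}(r,\cdot)\|_{L^{\infty}}\lesssim |t-r|^{\theta/\alpha}$ and $\|\nabla u^{t,n,i}(r,\cdot)\|_{L^{\infty}}\lesssim |t-r|^{(\theta-1)/\alpha}$; feeding the latter into BDG then produces $|t-r|^{\rho\theta/\alpha}$ (e.g.\ for $\alpha=2$: $\int_r^t(t-s)^{2(\theta-1)/\alpha}\,ds\sim(t-r)^{2\theta/\alpha}$, and likewise via the min bound for the Poisson integrand in the pure-jump case using $\theta\in(1,\alpha)$). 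Uniformity in $n$ then follows, as you intend, from the local Lipschitz continuity of the solution map and the convergence $(V^n,\mathcal{K}(V^n))\to\mathcal{V}$. So the main idea you need to change is precisely the choice of terminal time; the rest of your outline (interpolation to get the time-Hölder bound, the $n$-uniform estimates via Theorem~\ref{thm:singular}) then slots in as intended.
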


\noindent Let us end the preliminaries by stating the stochastic sewing lemma from \cite[Theorem 2.1]{le}, that will be a key tool in this paper.\\
Here and in the following, we use the notation $Z_{st}=Z_{s,t}=Z_{t}-Z_{s}$, $s\leqslant t$, for the increment of a stochastic process $(Z_{t})$ and $\E_{s}[\cdot]:=\E[\cdot\mid\F_{s}]$ for the conditional expectation. A stochastic process $(\Xi_{st})_{(s,t)\in\Delta_{T}}$ indexed by $\Delta_{T}$ we call adapted to $(\F_{t})$ if $\Xi_{st}$ is $\F_{t}$-measurable for all $(s,t)\in\Delta_{T}$. For $p\in [2,\infty]$, $m\in\N$ and an adapted $\R^{m}$-valued stochastic process $(\Xi_{st})_{(s,t)\in\Delta_{T}}$ indexed by $\Delta_{T}$, we define for $\theta\in (0,1)$,
\begin{align}\label{eq:ss-notation}
\norm{\Xi}_{\theta,p}:=\sup_{(s,t)\in\Delta_{T}}\frac{\norm{\Xi_{st}}_{L^{p}(\p,\R^{m})}}{\abs{t-s}^{\theta}},\quad \norm{\Xi\mid \F}_{\theta,p}:=\sup_{(s,t)\in\Delta_{T}}\frac{\norm{\E_{s}[\Xi_{st}]}_{L^{p}(\p,\R^{m})}}{\abs{t-s}^{\theta}}.
\end{align}
If $(Z_{t})_{t\in[0,T]}$ is an adapted process indexed by $[0,T]$, we use the same notation \eqref{eq:ss-notation} for $\Xi_{st}=Z_{st}=Z_{t}-Z_{s}$.
\begin{lemma}[Stochastic sewing lemma]\label{lem:ss}
Let $(\Omega,\F, (\F_{t})_{t\in [0,T]},\p)$ be a complete probability space. Let $(\Xi_{s,t})_{0\leqslant s\leqslant t\leqslant T}$ be a two-parameter stochastic process with values in $\R^{d}$, that is adapted ($\Xi_{s,t}$ being $\F_{t}$-measurable for $s\leqslant t$) and $L^{2}(\p)$-integrable. Let $\delta\Xi_{s,u,t}:=\Xi_{st}-\Xi_{su}-\Xi_{ut}$, $0\leqslant s\leqslant u\leqslant t\leqslant T$. Suppose that there are constants $\Gamma_{1},\Gamma_{2},\epsilon_{1},\epsilon_{2}>0$, such that for all $0\leqslant s\leqslant u\leqslant t\leqslant T$,
\begin{align}
\norm{\E[\delta\Xi_{s,u,t}\mid\F_{s}]}_{L^{2}(\p)}\leqslant\Gamma_{1}\abs{t-s}^{1+\epsilon_{1}},\quad\norm{\delta\Xi_{s,u,t}}_{L^{2}(\p)}\leqslant\Gamma_{2}\abs{t-s}^{\frac{1}{2}+\epsilon_{2}}.
\end{align}
Then, there exists a unique (up to modifications) stochastic process $(I_{t})_{t\in [0,T]}:=(I_{t}(\Xi))_{t\in[0,T]}$ with values in $\R^{d}$ satisfying the following properties
\begin{enumerate}
\item[•] $I_{0}=0$, $(I_{t})_{t\in [0,T]}$ is $(\F_{t})$-adapted and $L^{2}(\p)$-integrable and
\item[•] there exist constants $C_{1}=C(\epsilon_{1}),C_{2}=C(\epsilon_{2})>0$, such that for all $0\leqslant s\leqslant t\leqslant T$,
\begin{align}
&\norm{I_{t}-I_{s}-\Xi_{s,t}}_{L^{2}(\p)}\leqslant C_{1}\Gamma_{1}\abs{t-s}^{1+\epsilon_{1}}+C_{2}\Gamma_{2}\abs{t-s}^{\frac{1}{2}+\epsilon_{2}},\nonumber\\& \norm{\E[I_{t}-I_{s}-\Xi_{s,t}\mid\F_{s}]}_{L^{2}(\p)}\leqslant C_{1}\Gamma_{1}\abs{t-s}^{1+\epsilon_{1}}.
\end{align}
\end{enumerate}
Furthermore, for every $t\in[0,T]$ and any partition $\Pi=\{0=t_{0}<t_{1}<\dots<t_{N}=T\}$, the Riemann sums $I^{\Pi}_{t}=\sum_{i=0}^{N-1}\Xi_{t_{i},t_{i+1}}$ converge to $I_{t}$ in $L^{2}(\p)$ for vanishing mesh size $\abs{\Pi}:=\max_{i}\abs{t_{i+1}-t_{i}}\to 0$.
\end{lemma}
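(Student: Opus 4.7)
The plan is to construct $I_t$ as the $L^2(\p)$-limit of dyadic Riemann sums and then extend to arbitrary vanishing-mesh partitions. Fix $t\in(0,T]$ and let $\Pi_n=\{t_k^n=kt/2^n:0\leqslant k\leqslant 2^n\}$; set $I^n_t:=\sum_{k=0}^{2^n-1}\Xi_{t_k^n,t_{k+1}^n}$. Refining $\Pi_n$ into $\Pi_{n+1}$ bisects each subinterval at its midpoint $u_k$, so
\[
I^{n+1}_t-I^n_t=-\sum_{k=0}^{2^n-1}\delta\Xi_{t_k^n,u_k,t_{k+1}^n}=:-\sum_k J_k,
\]
and the whole argument rests on the splitting $J_k=\E_{t_k^n}[J_k]+M_k$ where $M_k:=J_k-\E_{t_k^n}[J_k]$.

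For the predictable part, the hypothesis on $\Gamma_1$ and the triangle inequality yield
\[
\Big\|\sum_k\E_{t_k^n}[J_k]\Big\|_{L^2(\p)}\leqslant\Gamma_1\sum_k(t/2^n)^{1+\epsilon_1}\lesssim\Gamma_1 T\,2^{-n\epsilon_1}.
\]
For the centered part, adaptedness of $\Xi$ makes each $M_k$ measurable with respect to $\F_{t_{k+1}^n}$, and for $j<k$ one has $\E[M_jM_k]=\E[M_j\E_{t_k^n}[M_k]]=0$; hence
\[
\Big\|\sum_k M_k\Big\|_{L^2(\p)}^2=\sum_k\|M_k\|_{L^2(\p)}^2\leqslant\Gamma_2^2\sum_k(t/2^n)^{1+2\epsilon_2}\lesssim\Gamma_2^2 T\,2^{-2n\epsilon_2}.
\]
Both bounds are summable in $n$, so $(I^n_t)$ is Cauchy in $L^2(\p)$ and the limit $I_t$ is $\F_t$-adapted.

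The two quantitative bounds in the statement come from telescoping the same decomposition along the dyadic refinement of a generic subinterval $[s,t]$: the $n=0$ Riemann sum on $[s,t]$ is precisely $\Xi_{s,t}$, summing the above estimates over all refinement levels produces the constants $C_1=C(\epsilon_1)$, $C_2=C(\epsilon_2)$, and the conditional version (which only sees the predictable part) yields the second inequality. Uniqueness follows by applying the same predictable/martingale split to the difference $D:=I-\tilde I$ of two candidates: $D$ is adapted, $D_0=0$, and along any partition $\pi$ of $[s,t]$ one gets $\norm{D_t-D_s}_{L^2(\p)}\lesssim \abs{t-s}\abs{\pi}^{\epsilon_1}+\abs{t-s}^{1/2}\abs{\pi}^{\epsilon_2}\to 0$, forcing $D\equiv 0$. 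For an arbitrary partition $\Pi$ with $\abs{\Pi}\to 0$, the same interval-by-interval estimate comparing $I^\Pi_t$ to its dyadic refinements gives $\norm{I_t-I^\Pi_t}_{L^2(\p)}\to 0$.

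The main obstacle is the martingale step, which is precisely the upgrade over Gubinelli's deterministic sewing lemma: the exponent $\tfrac12+\epsilon_2$ on $\Gamma_2$ is too weak for a pathwise sum to telescope, and one only wins by exploiting orthogonality of centered increments. The adaptedness of $\Xi$ to $(\F_t)$ is crucial here, since it is what guarantees $M_k\in L^2(\Omega,\F_{t_{k+1}^n},\p)$ with $\E_{t_k^n}[M_k]=0$ and hence the Pythagoras identity $\norm{\sum_k M_k}_{L^2(\p)}^2=\sum_k\norm{M_k}_{L^2(\p)}^2$. Without it the construction collapses to the deterministic setting, which would require the stronger exponent $1+\epsilon_2$ in place of $\tfrac12+\epsilon_2$.
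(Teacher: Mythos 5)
The paper does not prove this lemma: it is quoted verbatim from L\^e \cite[Theorem 2.1]{le} and used as a black box, so there is no in-paper proof to compare against. Your sketch is essentially L\^e's original argument, which upgrades Gubinelli's deterministic sewing lemma by a martingale-orthogonality trick. The core steps are correct: the split $J_k=\E_{t_k^n}[J_k]+M_k$ into a predictable part and a martingale-difference part, the triangle inequality controlling the predictable sum through $\Gamma_1$, the Pythagorean identity for $\sum_k M_k$ (valid precisely because the adaptedness of $\Xi$ makes $(M_k)$ an orthogonal martingale-difference array), and the observation that the $\F_s$-conditional bound sees only the predictable part since $\E_s[M_k]=0$ for $t_k^n\geqslant s$.

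Where the sketch is genuinely incomplete is additivity. You define $I_t$ as the $L^2(\p)$-limit of dyadic Riemann sums over $[0,t]$, and you bound the analogous dyadic limit $\hat I_{s,t}$ over a generic $[s,t]$ against $\Xi_{s,t}$; but the statement asks for a bound on $I_t-I_s-\Xi_{s,t}$, and $I_t-I_s=\hat I_{s,t}$ is not automatic since the dyadic grids of $[0,s]$, $[0,t]$ and $[s,t]$ are mutually incompatible. Your final step (``compare $I^\Pi_t$ to its dyadic refinements'') quietly presupposes this additivity: the limit of the dyadic refinements of $\Pi$ is $\sum_{[u,v]\in\Pi}\hat I_{u,v}$, which still has to be identified with $I_t$. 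The clean patch, as in L\^e's proof, is to first establish a maximal inequality $\norm[\big]{\sum_{[u,v]\in\pi}\Xi_{u,v}-\Xi_{s,t}}_{L^2(\p)}\leqslant C_1\Gamma_1\abs{t-s}^{1+\epsilon_1}+C_2\Gamma_2\abs{t-s}^{\frac{1}{2}+\epsilon_2}$ (together with its $\F_s$-conditional analogue) uniformly over \emph{all} partitions $\pi$ of $[s,t]$; from that uniform bound Cauchy-ness along arbitrary vanishing-mesh partitions, additivity of the limit, and the two quantitative estimates drop out simultaneously. Your uniqueness argument, the same split applied to $D=I-\tilde I$ over finer and finer partitions of $[s,t]$, is correct as written.
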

\begin{remark}
We can apply the stochastic sewing lemma to a germ $\Xi_{st}:=f_{s}Y_{st}=f_{s}(Y_{t}-Y_{s})$ for stochastic processes $(f_t),(Y_t)$. Then, typically the constants $\Gamma_{1},\Gamma_{2}$ depend linearily on the Hölder-type moment bounds of the stochastic processes $f$ and $Y$. The bounds from \cref{lem:ss} then imply that $\sup_{0\leqslant s<t\leqslant T}\norm{I_{t}-I_{s}-\Xi_{s,t}}_{L^{2}(\p)}\abs{t-s}^{-(1/2+\epsilon_{2})}\lesssim T^{1/2+\epsilon_{1}-\epsilon_{2}}\Gamma_{1}+\Gamma_{2}$ (assuming $\epsilon_{2}\in (0,1/2)$), which  yields the stability of the stochastic sewing integral.
\end{remark}

\end{section}
\begin{section}{Weak rough-path-type solutions}\label{sec:ws}
\noindent In this section, we define a (rough) weak solution to the singular SDE 
\begin{align}\label{eq:ssde}
dX_{t}=V(t,X_{t})dt+dL_{t},\quad X_{0}=x\in\R^{d}
\end{align}
for an enhanced Besov drift $V\in\mathcal{X}^{\beta,\gamma}$ with $\beta\in (\frac{2-2\alpha}{3},0)$ and $\gamma\in [\frac{2\beta+2\alpha-1}{\alpha},1)$ in the rough case, respectively $\gamma\in (\frac{1-\beta}{\alpha},1)$ in the Young case (cf. \cref{def:enhanced-dist}). We furthermore state our main \cref{thm:mainthm}, that proves equivalence of rough weak solutions and martingale solutions \cref{def:martp}. The proof of \cref{thm:mainthm} follows from \cref{thm:wsimp} and \cref{thm:mpiws} in \cref{sec:wsemp}.\\
In the case of (locally) bounded drifts $V$ the equivalence of solutions to the martingale problem and weak solutions is known by \cite[Theorem 8.1.1]{Stroock2006} in the Brownian noise case and by \cite[Theorem 1.1]{Kurtz2011} in the Lévy noise setting. 
The first attempt to define weak solutions in the singular drift case, is to replace the singular drift by the limit $Z$ of the drift terms $\int_{0}^{\cdot}V^{n}(t,X_{t})dt$ for a smooth sequence $(V^{n})$ approximating $V$. In this way, one obtains a \textit{canonical} weak solution concept. In the Young case, this yields a well-posed solution concept (if moreover requiring regularity bounds on the drift $Z$). However, it turns out that canonical weak solutions are in general non-unique in the rough regime (cf.~\cref{sec:wsyc} below). In this section, we thus adapt the canonical weak solution concept, imposing additional assumptions to ensure well-posedness. The idea for a (rough) weak solution is to impose rough-paths-type assumptions on certain iterated integrals $\mathbb{Z}^{V}$, that formally correspont to the resonant component in the enhancement $\calV$. 
This motivates \cref{def:ws} below.\\

\noindent We call a filtered probability space $(\Omega,\F,(\F)_{t\in[0,T]},\p)$ a stochastic basis, if $(\Omega,\F,(\F)_{t\in[0,T]},\p)$ is complete and the filtration $(\F_{t})$ is right-continuous. We call a process $L$ a $(\F_{t})$-Lévy process, if $L$ is adapted to $(\F_{t})$ with $L_{0}=0$, $L_{t}-L_{s}$ being independent of $\F_{s}$ and $L_{t}-L_{s}\stackrel{d}{=}L_{t-s}$ for all $0\leqslant s<t\leqslant T$. 

\begin{definition}[Rough weak solution]\label{def:ws}
Let $V\in\calX^{\beta,\gamma}$ for $\beta\in (\frac{2-2\alpha}{3},0)$ (with $\gamma\in [\frac{2\beta+2\alpha-1}{\alpha},1)$ in the rough case, respectively $\gamma\in [\frac{1-\beta}{\alpha},1)$ in the Young case). Let $x\in\R^{d}$. We call a triple $(X,L,\mathbb{Z}^{V})$ a rough weak solution to the SDE \eqref{eq:ssde} starting at $X_{0}=x\in\R^{d}$, if there exists a stochastic basis $(\Omega,\F, (\F_{t})_{t\geqslant 0},\p)$, such that $L$ is an $\alpha$-stable symmetric non-degenerate $(\F_{t})$-Lévy process and almost surely
\begin{align*}
X=x+Z+L,
\end{align*} where $Z$ is a continuous and $(\F_{t})$-adapted process with the property that
\begin{align}
\norm{Z}_{\frac{\alpha+\beta}{\alpha},2}+\norm{Z\mid\F}_{\frac{\alpha+\beta}{\alpha},\infty}<\infty.\label{eq:zhoelder1}
\end{align} 
Moreover $(\mathbb{Z}^{V}_{st})_{(s,t)\in\Delta_{T}}$ is a continuous, $(\F_{t})$-adapted, $\R^{d\times d}$-valued stochastic process indexed by $\Delta_{T}$ with 
\begin{align}
\norm{\mathbb{Z}^{V}}_{\frac{\alpha+\beta}{\alpha},2}+\norm{\mathbb{Z}^{V}\mid\F}_{\frac{2\alpha+2\beta-1}{\alpha},\infty}<\infty.\label{eq:ahoelder1}
\end{align} 
Furthermore, $Z$ and $\mathbb{Z}^{V}$ are given as follows.  There exists a sequence $(V^{n})\subset C_{T}(C^{\infty}_{b})^{d}$ with $(V^{n},\mathcal{K}(V^{m},V^{n}))\to (V,\mathcal{V}_{2})$ in $\calX^{\beta,\gamma}$ for $m,n\to\infty$ with the following properties:
\begin{enumerate}
\item[1.)] $Z^{n}:=\int_{0}^{\cdot}V^{n}(s,X_{s})ds$ converges to $Z$ in the sense that
\begin{align}
\lim_{n\to\infty}[\norm{Z^{n}-Z}_{\frac{\alpha+\beta}{\alpha},2}+\norm{Z^{n}-Z\mid\F}_{\frac{\alpha+\beta}{\alpha},\infty}]=0
\end{align} 
and 
\item[2.)] $\mathbb{Z}^{m,n}_{st}=(\mathbb{Z}^{m,n}_{st}(i,j))_{i,j}=\paren[\big]{\int_{s}^{t} [J^{T}(\partial_{i} V^{m,j})(r,X_{r})-J^{T}(\partial_{i} V^{m,j})(s,X_{s})]dZ_{r}^{n,i}}_{i,j}$ converges to $\mathbb{Z}^{V}$ in the sense that
\begin{align}
\lim_{m,n\to\infty}[\norm{\mathbb{Z}^{m,n}-\mathbb{Z}^{V}}_{\frac{\alpha+\beta}{\alpha},2}+\norm{\mathbb{Z}^{m,n}-\mathbb{Z}^{V}\mid\F}_{\frac{2\alpha+2\beta-1}{\alpha},\infty}]=0.
\end{align} 
\end{enumerate}
We also call $X$ a (rough) weak solution, if there exists a stochastic basis, an $\alpha$-stable symmetric non-degenerate Lévy process $L$ and a stochastic process $\mathbb{Z}^{V}$, such that $(X,L,\mathbb{Z}^{V})$ is a rough weak solution.
\end{definition}
\begin{remark}[Notation] We also use the following abbreviations. Let
\begin{align*}
\mathbb{Z}^{m,n}_{st}(i,j)
&=\int_{s}^{t} J^{T}(\partial_{i} V^{m,j})(r,X_{r})dZ_{r}^{n,i}-J^{T}(\partial_{i} V^{m,j})(s,X_{s})Z^{n,i}_{st}\\&=:\mathbb{A}^{m,n}_{st}(i,j)-J^{T}(\partial_{i} V^{m,j})(s,X_{s})Z^{n,i}_{st}.
\end{align*}  
Let furthermore 
\begin{align*}
\mathbb{A}^{i}_{st}:=\lim_{m,n\to\infty}(\mathbb{A}^{m,n}_{st}(i,j))_{j=1,\dots,d}=:\lim_{m,n\to\infty}\mathbb{A}^{m,n,i}_{st}
\end{align*} for $i=1,\dots,d$. Here, the convergence with respect to $\norm{\cdot}_{\frac{\alpha+\beta}{\alpha},2}$ follows from the convergences $1.)$ and $2.)$ and that $J^{T}(\partial_{i}V^{m,j})\to J^{T}(\partial_{i}V^{j})$ in $C_{T}L^{\infty}$. Moreover, $(\mathbb{A}^{m,n})_{m,n},\mathbb{A}$ satisfy the same bounds as $Z$ in \eqref{eq:zhoelder1}.\\
We write 
\begin{align*}
\mathbb{Z}^{V,i}_{st}:=\lim_{m,n\to\infty}(\mathbb{Z}^{m,n}_{st}(i,j))_{j=1,\dots,d}=:\lim_{m,n\to\infty}\mathbb{Z}^{m,n,i}_{st}
\end{align*} 
for $i=1,\dots,d$. And for fixed $m\in\N$, we define
$\mathbb{A}^{m,\infty,i}_{st}:=\int_{s}^{t}J^{T}(\partial_{i}V^{m})(r,X_{r})dZ_{r}^{i}$,  
and analogously we define $\mathbb{Z}^{m,\infty,i}_{st}:=\int_{s}^{t}[J^{T}(\partial_{i}V^{m})(r,X_{r})-J^{T}(\partial_{i}V^{m})(s,X_{s})]dZ_{r}^{i}$.
\end{remark}
\begin{remark}[$X$ is a Dirichlet process]\label{rem:Dirichlet} By the $L^{2}$-moment bound on $Z$ from \eqref{eq:zhoelder1}, $Z$ has zero quadratic variation as $2(\alpha+\beta)/\alpha>1$. Thus, $X=x+Z+L$ is a Dirichlet process, i.e.~the sum of a local martingale and a zero quadratic variation process, cf.~\cite[Definition 2.4]{cjml}. In particular, for $F\in C^{1,2}_{b}([0,T]\times\R^{d},\R)$, the Itô-formula for $F(t,X_{t})$ from \cite[Theorem 3.1]{cjml} holds, cf.~also the classical article \cite{Follmer1981}, which can be extended to time depending $F$, that are $C^{1}$ in time. Notice that $Z$ is continuous and thus in the Itô-formula the terms involving the quadratic variation and the pure jump part of $Z$ vanish. In particular, the stochastic integral $\int_{0}^{\cdot} \nabla F(s,X_{s})\cdot dZ_{s}$ is the limit, in probability, of the classical Riemann sums.
\end{remark}

\begin{remark}[One and all sequences $(V^{n})$]
If $(X,L,\mathbb{Z}^{V})$ is a weak solution and $\calV\in\calX^{\beta,\gamma}$, then the convergences in $1.)$ and $2.)$ are true for \underline{all} sequences $(V^{n})$ with $(V^{n},\mathcal{K}(V^{n},V^{m}))\to \calV$ in $\calX^{\beta,\gamma}$. This means, that $(Z^{n})$, $(\mathbb{Z}^{m,n})$ converge and the limit is the same for any such sequence $(V^{n})$. Indeed, this will follow from the equivalence proof of the solution concepts below (specifically \cref{thm:mpiws}).
\end{remark}

\noindent The following lemma is an application of \cref{lem:ss}. \cref{lem:generalws} will in particular be used to prove (together with \cref{thm:mainthm} and \cref{thm:class} below), that in the Young case rough weak solutions are equivalent to canonical weak solutions, defined in \cref{sec:wsyc}.  
\begin{lemma}\label{lem:generalws}
Let $(X,L,\mathbb{Z}^{V})$ be a weak solution and let $f\in C_{T}^{\theta/\alpha}L^{\infty}\cap C_{T}\calC^{\theta}$ for $\theta\in (0,1)$ with $(\theta+\alpha+\beta)/\alpha>1$. Let $i\in\{1,\dots,d\}$.\\ Then, for every $t\in[0,T]$, the stochastic sewing integral 
\begin{align*}
I(f,Z^i)_{t}:=\lim_{\abs{\Pi}\to 0}\sum_{s,r\in\Pi}f(s,X_{s})(Z_{r}^{i}-Z_{s}^{i})\in L^{2}(\p)
\end{align*} for finite partitions $\Pi$ of $[0,t]$ with $\abs{\Pi}=\max_{s,r\in\Pi}\abs{r-s}\to 0$, is well-defined and allows for the bound,
\begin{align*}
&\norm{I(f,Z^i)_{t}-I(f,Z^i)_{s}-f(s,X_{s})Z_{s,t}^{i}}_{L^{2}(\p)}\\&\quad\lesssim \norm{f}_{C_{T}L^{\infty}}\norm{Z}_{\frac{\alpha+\beta}{\alpha},2}\abs{t-s}^{(\alpha+\beta)/\alpha}\\&\qquad\qquad+\norm{f}_{C_{T}^{\theta/\alpha}L^{\infty}\cap C_{T}\calC^{\theta}}(1+\norm{Z}_{\frac{\alpha+\beta}{\alpha},2}^{\theta})\norm{Z\mid\F}_{\frac{\alpha+\beta}{\alpha},\infty}\abs{t-s}^{(\theta+\alpha+\beta)/\alpha}.
\end{align*}
In particular, for $\beta\in (\frac{1-\alpha}{2},0)$ (Young regime), the existence of the integral 
\begin{align*}
\mathbb{A}_{0,t}=\paren[\Big]{\int_{0}^{t}J^{T}(\partial_{i}V^{j})(s,X_{s})dZ^{i}_{s}}_{i,j}\in L^{2}(\p,\R^{d\times d}),\quad t\in[0,T]
\end{align*} and the bound $\norm{\mathbb{Z}^{V}}_{(\alpha+\beta)/\alpha,2}<\infty$, as well as the convergence $\norm{\mathbb{Z}^{m,n}-\mathbb{Z}^{V}}_{\frac{\alpha+\beta}{\alpha},2}\to 0$ in $2.)$ follow. 
Furthermore, the convergence $\norm{\mathbb{Z}^{m,n}-\mathbb{Z}^{V}\mid\F}_{\frac{2\alpha+2\beta-1}{\alpha},2}\to 0$ follows.
\end{lemma}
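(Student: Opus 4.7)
The plan is to apply the stochastic sewing lemma (\cref{lem:ss}) to the germ $\Xi_{st}:=f(s,X_{s})(Z^{i}_{t}-Z^{i}_{s})$, so that the sewn limit will yield $I(f,Z^{i})$. A direct computation gives
\begin{align*}
\delta\Xi_{s,u,t} = (f(s,X_{s})-f(u,X_{u}))(Z^{i}_{t}-Z^{i}_{u}),
\end{align*}
so it only remains to verify the two $L^{2}(\p)$ hypotheses of \cref{lem:ss}.

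For the pathwise bound, I would estimate $|f(s,X_{s})-f(u,X_{u})|\leqslant 2\norm{f}_{C_{T}L^{\infty}}$ pointwise and invoke $\norm{Z}_{(\alpha+\beta)/\alpha,2}<\infty$, giving
\begin{align*}
\norm{\delta\Xi_{s,u,t}}_{L^{2}(\p)}\leqslant 2\norm{f}_{C_{T}L^{\infty}}\norm{Z}_{(\alpha+\beta)/\alpha,2}\abs{t-s}^{(\alpha+\beta)/\alpha}.
\end{align*}
This is of sewing form $1/2+\epsilon_{2}$ because $\alpha+2\beta>0$ throughout the regime $\beta>(2-2\alpha)/3$ with $\alpha\in(1,2]$.

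For the conditional bound, the tower property combined with the $\F_{u}$-measurability of $f(s,X_{s})-f(u,X_{u})$ yields $\E_{s}[\delta\Xi_{s,u,t}]=\E_{s}[(f(s,X_{s})-f(u,X_{u}))\E_{u}[Z^{i}_{u,t}]]$, and extracting the $L^{\infty}$-conditional bound on $Z$ reduces matters to estimating $\norm{f(s,X_{s})-f(u,X_{u})}_{L^{2}(\p)}$. The split $f(s,X_{s})-f(u,X_{u})=[f(s,X_{s})-f(u,X_{s})]+[f(u,X_{s})-f(u,X_{u})]$ produces, thanks to the $C_{T}^{\theta/\alpha}L^{\infty}\cap C_{T}\calC^{\theta}$ regularity, the contributions $\norm{f}_{C_{T}^{\theta/\alpha}L^{\infty}}\abs{u-s}^{\theta/\alpha}$ and $\norm{f}_{C_{T}\calC^{\theta}}\E[\abs{X_{s}-X_{u}}^{2\theta}]^{1/2}$. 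Writing $X=x+Z+L$, the moment term reduces via subadditivity (or $(a+b)^{p}\lesssim a^{p}+b^{p}$) to $\E[\abs{Z_{s}-Z_{u}}^{2\theta}]+\E[\abs{L_{s}-L_{u}}^{2\theta}]$; the first is controlled by $\norm{Z}_{(\alpha+\beta)/\alpha,2}^{2\theta}\abs{u-s}^{2\theta(\alpha+\beta)/\alpha}$ via Jensen applied to the concave map $x\mapsto x^{\theta}$, while the second equals by $\alpha$-stable scaling a constant multiple of $\abs{u-s}^{2\theta/\alpha}$ (finite whenever $2\theta<\alpha$, automatic in the Young application below). Since $\alpha+\beta>1$ throughout our regime (as $(2-2\alpha)/3>1-\alpha$ for $\alpha>1$), both $\abs{u-s}^{\theta/\alpha}$ and $\abs{u-s}^{\theta(\alpha+\beta)/\alpha}$ are controlled, up to a $T$-dependent constant, by $\abs{u-s}^{\theta/\alpha}$. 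Combining with the prefactor $\norm{Z\mid\F}_{(\alpha+\beta)/\alpha,\infty}\abs{t-u}^{(\alpha+\beta)/\alpha}$ and the bounds $\abs{t-u},\abs{u-s}\leqslant\abs{t-s}$ yields the required conditional exponent $(\theta+\alpha+\beta)/\alpha>1$.

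With both hypotheses verified, \cref{lem:ss} produces $I(f,Z^{i})$ as the $L^{2}$-limit of the Riemann sums with the stated two-term error bound. For the ``in particular'' assertion, I would apply the main statement with $f^{j}:=J^{T}(\partial_{i}V^{j})$, which by the Schauder estimates of \cref{prop:contfl} together with standard time-regularity for the semigroup (cf.~\cite{kp}) lies in $C_{T}\calC^{\alpha+\beta-1}\cap C_{T}^{(\alpha+\beta-1)/\alpha}L^{\infty}$. The choice $\theta:=\alpha+\beta-1\in(0,1)$ satisfies $\theta+\beta=\alpha+2\beta-1>0$ exactly in the Young regime $\beta>(1-\alpha)/2$. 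Consequently $\mathbb{A}_{0,t}$ exists in $L^{2}(\p,\R^{d\times d})$, the pathwise sewing bound gives $\norm{\mathbb{Z}^{V}}_{(\alpha+\beta)/\alpha,2}<\infty$, and the conditional sewing bound (with $(\theta+\alpha+\beta)/\alpha=(2\alpha+2\beta-1)/\alpha$) gives $\norm{\mathbb{Z}^{V}\mid\F}_{(2\alpha+2\beta-1)/\alpha,2}<\infty$. The asserted convergences follow by applying exactly the same estimates to the germ differences $\Xi^{m,n}-\Xi$, using the convergence $V^{n}\to V$ in $C_{T}\calC^{\beta+(1-\gamma)\alpha}$ (hence $J^{T}(\partial_{i}V^{n,j})\to J^{T}(\partial_{i}V^{j})$ in the relevant norm via Schauder) together with the convergence of $(Z^{n})$ to $Z$ in both $\norm{\cdot}_{(\alpha+\beta)/\alpha,2}$ and $\norm{\cdot\mid\F}_{(\alpha+\beta)/\alpha,\infty}$ granted by \cref{def:ws}. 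The main subtlety is the conditional $L^{2}$ bound: one must route the only $L^{\infty}$-conditional object available (the $Z$-increment) through the tower property, keeping the $f$-difference in $L^{2}$, so as to avoid needing $L^{p}$-moments of the $\alpha$-stable process $L$ for $p\geqslant\alpha$.
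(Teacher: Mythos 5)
Your proposal matches the paper's proof almost step for step: same germ $\Xi_{st}=f(s,X_s)Z^i_{st}$, same identity $\delta\Xi_{s,u,t}=(f(s,X_s)-f(u,X_u))Z^i_{ut}$, same trivial $L^2$-bound on the increment, and the same factorization $\E_s[\delta\Xi_{s,u,t}]=\E_s[(f(s,X_s)-f(u,X_u))\E_u[Z^i_{u,t}]]$ for the conditional estimate. The one place you diverge is in how the $f$-increment is finally measured, and it creates a small but real gap.

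After extracting $\|\E_u[Z^i_{ut}]\|_{L^\infty}$ you pass (via conditional Jensen) to $\|f(s,X_s)-f(u,X_u)\|_{L^2}$, hence to $\E[|X_u-X_s|^{2\theta}]^{1/2}$, which requires $\E[|L_1|^{2\theta}]<\infty$, i.e.\ $2\theta<\alpha$. This is not among the hypotheses: the lemma allows any $\theta\in(0,1)$ with $(\theta+\alpha+\beta)/\alpha>1$, and for $\alpha$ close to $1$ there are admissible $\theta\in[\alpha/2,1)$ for which your route breaks down. The paper sidesteps this by \emph{keeping} the conditional expectation $\E_s[\cdot]$ wrapped around the $f$-increment: splitting $|X_r-X_s|^\theta\leq|Z_{rs}|^\theta+|L_{rs}|^\theta$ \emph{inside} $\E_s[\cdot]$, the $L$-term becomes $\E_s[|L_{rs}|^\theta]=\E[|L_{rs}|^\theta]$, deterministic by independence and equal to $|r-s|^{\theta/\alpha}\E[|L_1|^\theta]$, so only $\theta<\alpha$ is needed (automatic); the $Z$-term is then bounded by $\E[|Z_{rs}|^{2\theta}]^{1/2}$ via Jensen, harmless since $Z$ has a second moment. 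Your closing remark that the tower-property routing "avoids needing $L^p$-moments of $L$ for $p\geqslant\alpha$" is therefore not quite accurate as written --- you still need $L^{2\theta}$-moments, and $2\theta$ may exceed $\alpha$. For the ``in particular'' Young application with $\theta=\alpha+\beta-1$ the stronger condition $2\theta<\alpha$ is automatic (since $\beta<0$ gives $2(\alpha+\beta-1)<2(\alpha-1)<\alpha$ for $\alpha<2$, and $2(1+\beta)<2$ for $\alpha=2$), so that part of your argument is fine; the gap only concerns the generality of the Hölder exponent $\theta$. The fix is mechanical: do not drop $\E_s$ by Jensen, estimate $\E[\E_s[|f(u,X_u)-f(s,X_s)|]^2]$, and handle the $L$-increment inside the conditional expectation as the paper does.
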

\begin{remark}
\cref{lem:generalws} implies well-definedness of $\mathbb{A}^{m,\infty}_{0,t}(i,j)$ and $\mathbb{A}^{m,n}_{0,t}(i,j)$ as the limit of the Riemann sums $\sum_{s,r\in\Pi} J^{T}(\partial_{i} V^{m,j})(s,X_{s})Z^{i}_{s,r}$, analogously for $\mathbb{A}^{m,n}_{0,t}(i,j)$. In particular, the stability of the stochastic sewing integral yields that, for fixed $m\in\N$, $\lim_{n\to\infty}\mathbb{A}^{m,n}_{st}=\mathbb{A}^{m,\infty}_{st}$ in $L^{2}(\p)$, uniformly in $(s,t)\in\Delta_{T}$.\\
Furthermore, the lemma shows that in the Young case $f=J^{T}(\partial_{i}V^{j})\in C_{T}^{(\alpha+\beta-1)/\alpha}L^{\infty}\cap C_{T}\calC^{\alpha+\beta-1}$ satisfies the assumptions for $\theta=\alpha+\beta-1$. Thus, existence of the integral $\mathbb{A}_{st}(i,j)$ follows. In the rough case, 
the regularity of $f=J^{T}(\partial_{i}V^{j})$ does not suffice, because $2(\alpha+\beta)-1\leqslant\alpha$ if $\beta\leqslant (1-\alpha)/2$. Thus, the bounds \eqref{eq:ahoelder1} on $\mathbb{Z}^{V}$ and the convergence in $2.)$ are non-trivial requirements on a weak solution in the rough case.
\end{remark}
\begin{remark}
Notice that the lemma yields the convergence $\norm{\mathbb{Z}^{m,n}-\mathbb{Z}^{V}}_{\frac{2\alpha+2\beta-1}{\alpha},2}\to 0$ in the Young case, but not $\norm{\mathbb{Z}^{m,n}-\mathbb{Z}^{V}}_{\frac{2\alpha+2\beta-1}{\alpha},\infty}\to 0$. In fact the latter also holds true and we refer to \cref{thm:class} below. 
\end{remark}
\begin{proof}[Proof of \cref{lem:generalws}]
We apply the stochastic sewing lemma, \cref{lem:ss}, to the germ 
\begin{align*}
\Xi_{st}=f(s,X_{s})(Z^{i}_{t}-Z^{i}_{s}).
\end{align*}
We have that 
\begin{align*}
\delta\Xi_{srt}=\Xi_{st}-\Xi_{sr}-\Xi_{rt}=[f(r,X_{r})-f(s,X_{s})]Z^{i}_{rt}.
\end{align*} To prove the bound on the expectation in the stochastic sewing lemma, we will use the trivial estimate $\abs{f(r,X_{r})-f(s,X_{s})}\leqslant 2\norm{f}_{C_{T}L^{\infty}}\lesssim 2 \norm{f}_{C_{T}\calC^{\theta}}$ as $\theta>0$ and the bound on $Z$ from \eqref{eq:zhoelder1}, such that
\begin{align*}
\E[\abs{\delta\Xi_{srt}}^{2}]\leqslant\norm{f}_{C_{T}L^{\infty}}^{2}\E[\abs{Z^{i}_{rt}}^{2}]\leqslant\norm{f}_{C_{T}\calC^{\theta}}^{2}\norm{Z}_{\frac{\alpha+\beta}{\alpha},2}^{2}\abs{t-s}^{2(\alpha+\beta)/\alpha}
\end{align*} with $(\alpha+\beta)/\alpha>1/2$.
For bound on the conditional expectation, we utilize the bounds \eqref{eq:zhoelder1} on $Z$ 
and the time and space regularity of $f$, such that
\begin{align*}
\abs{f(r,X_{r})-f(s,X_{s})}&\leqslant\abs{f(r,X_{r})-f(s,X_{r})}+\abs{f(s,X_{s})-f(s,X_{r})}\\&\lesssim \norm{f}_{C_{T}^{\theta/\alpha}L^{\infty}}\abs{r-s}^{\theta/\alpha}+\norm{f}_{C_{T}\calC^{\theta}}\abs{X_{r}-X_{s}}^{\theta}\\&\lesssim \norm{f}_{C_{T}^{\theta/\alpha}L^{\infty}\cap C_{T}\calC^{\theta} }[\abs{r-s}^{\theta/\alpha}+\abs{X_{r}-X_{s}}^{\theta}],
\end{align*} where we used that the norm in $\calC^{\theta}$ is equivalent to the norm in the Hölder space $C^{\theta}_{b}$ of bounded, $\theta$-Hölder continuous functions for $\theta\in (0,1)$ (cf.~\cite[Section 2.7, Examples]{Bahouri2011}). 
Hence, with $\abs{X_{r}-X_{s}}\leqslant\abs{Z_{r}-Z_{s}}+\abs{L_{r}-L_{s}}$, we can estimate 
\begin{align*}
\MoveEqLeft
\E[\abs{\E_{s}[\delta\Xi_{srt}]}^{2}]
\\&=\E[\abs{\E_{s}[f(r,X_{r})-f(s,X_{s}))\E_{r}[Z_{rt}^{i}]]}^{2}]\\&\leqslant\E[\E_{s}[\abs{f(r,X_{r})-f(s,X_{s})}\abs{\E_{r}[Z_{rt}^{i}]}]^{2}]\\&\leqslant\norm{\E_{r}[Z_{rt}^{i}]}_{L^{\infty}(\p)}^{2}\E[\E_{s}[\abs{f(r,X_{r})-f(s,X_{s})}]^{2}]\\&\lesssim\norm{Z\mid\F}_{\frac{\alpha+\beta}{\alpha},\infty}^{2}\abs{t-r}^{2(\alpha+\beta)/\alpha}\norm{f}_{C_{T}^{\theta/\alpha}L^{\infty}\cap C_{T}\calC^{\theta}}^{2}\times\\&\qquad\qquad\paren[\big]{\abs{r-s}^{2\theta/\alpha}+\E[\abs{Z_{r}-Z_{s}}^{2\theta}]+\E[\E_{s}[\abs{L_{r}-L_{s}}^{\theta}]^{2}]}\\&\lesssim_{T}\norm{Z\mid\F}_{\frac{\alpha+\beta}{\alpha},\infty}^{2}(1+\norm{Z}_{\frac{\alpha+\beta}{\alpha},2}^{2\theta})\norm{f}_{C_{T}^{\theta/\alpha}L^{\infty}\cap C_{T}\calC^{\theta}}^{2}\abs{t-s}^{2(\theta+\alpha+\beta)/\alpha}
\end{align*} 
In the last estimate above, we used stationarity, scaling and independence of the increment $L_{r}-L_{s}$ of $\F_{s}$ for $s\leqslant r$, such that almost surely 
\begin{align*}
\E_{s}[\abs{L_{r}-L_{s}}^{\theta}]=\E[\abs{L_{r}-L_{s}}^{\theta}]=\abs{r-s}^{\theta/\alpha}\E[\abs{L_{1}}^{\theta}]\lesssim \abs{r-s}^{\theta/\alpha}
\end{align*} and the expectation is finite due to $\theta<\alpha$. Furthermore, as $\theta\in (0,1)$, we used above Jensen's inequality and the following estimate on $Z$:
\begin{align*}
\E[\abs{Z_{r}-Z_{s}}^{2\theta}]\leqslant\E[\abs{Z_{r}-Z_{s}}^{2}]^{\theta}&\leqslant\norm{Z}_{\frac{\alpha+\beta}{\alpha},2}^{2\theta}\abs{r-s}^{2\theta(\alpha+\beta)/\alpha}\lesssim_{T}\norm{Z}_{\frac{\alpha+\beta}{\alpha},2}^{2\theta}\abs{r-s}^{2\theta/\alpha}.
\end{align*} 
Due to $(\theta+\alpha+\beta)/\alpha>1$, \cref{lem:ss} applies and yields the bound for $I(f,Z^i)$.\\
In the Young case, $\beta>(1-\alpha)/2$, we can take $f=J^{T}(\partial_{i}V^{j})\in C_{T}^{(\alpha+\beta-1)/\alpha}L^{\infty}\cap C_{T}\calC^{\alpha+\beta-1}$ with $(2\alpha+2\beta-1)/\alpha>1$, which yields existence of the integral $I(\Xi)_{t}=\mathbb{A}_{0,t}(i,j)$.
Let $I(\Xi^{m,n})$ be the sewing integral with germ \begin{align*}
\Xi_{sr}^{m,n}=J^{T}(\partial_{i}V^{m,j})(s,X_{s})(Z^{n,i}_{r}-Z^{n,i}_{s})
\end{align*} and let $f^{m}:=J^{T}(\partial_{i}V^{m,j})$.
By the Schauder and interpolation estimates we furthermore have that $f^{m}\to f$ in $\mathcal{L}_{T}^{0,\alpha+\beta-1}$ and in $C_{T}^{(\alpha+\beta-1)/\alpha}L^{\infty}\cap C_{T}\calC^{\alpha+\beta-1}$.\\
Let $\mathbb{Z}^{V}_{st}=\mathbb{A}_{st}-\Xi_{st}=I(\Xi)_{st}-\Xi_{st}$. Then, \cref{lem:ss} yields that $\norm{\mathbb{Z}^{V}}_{\frac{\alpha+\beta}{\alpha},2}<\infty$ and $\norm{\mathbb{Z}^{V}\mid \F}_{\frac{2\alpha+2\beta-1}{\alpha},2}<\infty$. The stability of the sewing integral implies the convergence $\norm{\mathbb{Z}^{m,n}-\mathbb{Z}^{V}}_{\frac{\alpha+\beta}{\alpha},2}\to 0$, since by the estimates above we obtain for $(s,t)\in\Delta_{T}$ that
\begin{align*}
\MoveEqLeft
\norm{\mathbb{Z}^{m,n}_{st}-\mathbb{Z}^{V}_{st}}_{L^{2}}\\&\leqslant\norm{I(\Xi^{m,n})_{st}-\Xi^{m,n}_{s,t}-(I(\Xi)_{st}-\Xi_{s,t})}_{L^{2}}
\\&\lesssim_{T}\abs{t-s}^{(\alpha+\beta)/\alpha}\paren[\Big]{\sup_{n}\norm{Z^{n}\mid\F}_{\frac{\alpha+\beta}{\alpha},\infty}(1+\norm{Z}_{\frac{\alpha+\beta}{\alpha},2}^{\theta})\norm{f^{m}-f}_{\mathcal{L}_{T}^{0,\alpha+\beta-1}}
\\&\qquad \qquad\hspace{3em} + \norm{Z^{n}-Z\mid\F}_{\frac{\alpha+\beta}{\alpha},\infty}(1+\norm{Z}_{\frac{\alpha+\beta}{\alpha},2}^{\theta})\norm{f}_{\mathcal{L}_{T}^{0,\alpha+\beta-1}}
\\&\qquad\qquad\hspace{3em}+\norm{f^{m}-f}_{\mathcal{L}_{T}^{0,\alpha+\beta-1}}\sup_{n}\norm{Z^{n}}_{\frac{\alpha+\beta}{\alpha},2}
\\&\qquad\qquad\hspace{3em}+\norm{f}_{\mathcal{L}_{T}^{0,\alpha+\beta-1}}\norm{Z^{n}-Z}_{\frac{\alpha+\beta}{\alpha},2}}
\\&\to 0
\end{align*} for $n,m\to\infty$.
Analogously we can show that $\norm{\mathbb{Z}^{m,n}-\mathbb{Z}^{V}\mid\F}_{\frac{2\alpha+2\beta-1}{\alpha},2}\to 0$.
\end{proof}

\noindent Let $\calV=(V,\calV_{2})\in\calX^{\beta,\gamma}$ and let $(X,L,\mathbb{Z}^{V})$ be a weak solution. Then we expect (for a proof see \cref{thm:mpiws} below) the following representations of $Z$ and $\mathbb{A}$ (and thus of $\mathbb{Z}^{V}$):
\begin{align*}
\E_{s}[Z_{s,t}^{i}]=\E_{s}[u^{t,i}(t,X_{t})-u^{t,i}(s,X_{s})]\,\text{ and }\,\E_{s}[\mathbb{A}_{s,t}(i,j)]=\E_{s}[v^{t,i,j}(t,X_{t})-v^{t,i,j}(s,X_{s})]
\end{align*} for the solutions $u^{t}=(u^{t,i})_{i=1,\dots,d}$, $v^{t}=(v^{t,i,j})_{i,j=1,\dots,d}$ of the backward PDEs
\begin{align}\label{eq:u}
\mathcal{G}^{\calV}u^{t,i}=V^{i},\quad u^{t,i}(t,\cdot)=0.
\end{align} and 
\begin{align}\label{eq:v}
\mathcal{G}^{\calV}v^{t,i,j}&=J^{T}(\partial_{i} V^{j})\cdot V^{i}\nonumber\\&=\calV_{2}(i,j)+J^{T}(\partial_{i} V^{j})\para V^{i}+J^{T}(\partial_{i} V^{j})\arap V^{i},\quad v^{t,i,j}(t,\cdot)=0.
\end{align}
Let for $i\in\{1,\dots,d\}$, $v^{t,i}:=(v^{t,i,j})_{j=1,\dots,d}$.\\
Together with the bound $\norm{\mathbb{Z}^{V}\mid\F}_{\frac{2\alpha+2\beta-1}{\alpha},\infty}<\infty$ from \eqref{eq:ahoelder1}, this motivates the definition of the class of processes $\mathcal{K}^{\vartheta}=\mathcal{K}^{\vartheta}(\calV)$.
\begin{definition}[Class $\mathcal{K}^{\vartheta}$]\label{def:class}
Let $\beta\in (\frac{2-2\alpha}{3},0)$, $\calV\in\calX^{\beta,\gamma}$ and $\vartheta \in ((\alpha+\beta)/\alpha,1]$. Let for $t\in(0,T]$, $v^{t}$  be the solution of the PDE \eqref{eq:v} 
and $u^{t}$ be the solution of \eqref{eq:u}.\\
An adapted càdlàg stochastic process $(X_{t})_{t\in[0,T]}$ is said to be of class $\mathcal{K}^{\vartheta}(\calV)$ if for all $i=1,\dots,d$:
\begin{align}\label{eq:class}
\MoveEqLeft
\norm{\E_{s}[(v^{t,i}(t,X_{t})-v^{t,i}(s,X_{s}))-J^{T}(\partial_{i} V)(s,X_{s})(u^{t,i}(t,X_{t})-u^{t,i}(s,X_{s}))]}_{(L^{\infty}(\p))^d}\nonumber\\&=\max_{j=1,\dots,d}\norm{\E_{s}[(v^{t,i,j}(t,X_{t})-v^{t,i,j}(s,X_{s}))-J^{T}(\partial_{i} V^{j})(s,X_{s})(u^{t,i}(t,X_{t})-u^{t,i}(s,X_{s}))]}_{L^{\infty}(\p)}\nonumber
\\&\lesssim\abs{t-s}^{\vartheta}.
\end{align}
\end{definition}
\begin{remark}
Using $v^{t,i,j}, u^{t,i}\in C_{t}^{(\alpha+\beta)/\alpha}L^{\infty}$ with norm uniformly bounded for $t\in [0,T]$ by \cite[Corollary 4.14]{kp-sk},
a trivial estimate yields \eqref{eq:class} for $\vartheta=(\alpha+\beta)/\alpha$. We are interested in the case $\vartheta>(\alpha+\beta)/\alpha$.
\end{remark}
\noindent As an intermediate step in the equivalence proof of the solution concepts we will prove (cf.~\cref{thm:class}) that the solution of the martingale problem for $\mathcal{G}^{\calV}$ is a process of class $\mathcal{K}^{\vartheta}(\calV)$ for $\vartheta=(2\alpha+2\beta-1)/\alpha$.\\
The following result is the main theorem of this paper. The proof follows directly from \cref{thm:wsimp,thm:mpiws} in \cref{sec:wsemp}.
\begin{theorem}\label{thm:mainthm}
Let $\alpha\in (1,2]$, $\beta\in (\frac{2-2\alpha}{3},0)$
and $V\in\calX^{\beta,\gamma}$.  Let $x\in\R^{d}$.
Then, $X$ is a rough weak solution in the sense of \cref{def:ws} if and only if $X$ solves the $(\mathcal{G}^{\calV},\delta_{x})$-martingale problem.
 In particular, rough weak solutions are unique in law.
\end{theorem}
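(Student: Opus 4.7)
The plan is to split the equivalence into the two implications proved as \cref{thm:wsimp,thm:mpiws}, whose combination together with uniqueness of martingale solutions from \cref{thm:mainthm1} yields uniqueness in law of rough weak solutions.

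For the direction ``rough weak solution implies martingale solution'', I would fix a rough weak solution $(X,L,\mathbb{Z}^{V})$ and data $(f,u^{T})$ as in \cref{def:martp}, and approximate $V$ by smooth $(V^{n})$ with $(V^{n},\mathcal{K}(V^{n},V^{m}))\to\calV$ in $\calX^{\beta,\gamma}$. Solving the regularized backward PDEs $\mathcal{G}^{V^{n}}u^{n}=f$ with $u^{n}(T,\cdot)=u^{T}$ and applying the Dirichlet-process It\^o formula from \cref{rem:Dirichlet} to $u^{n}(t,X_{t})$, then using $\partial_{s}u^{n}=f+\La u^{n}-V^{n}\cdot\nabla u^{n}$, one is led to an identity of the form
\begin{align*}
u^{n}(t,X_{t})-u^{n}(0,x)-\int_{0}^{t}f(s,X_{s})\diff s=\int_{0}^{t}\nabla u^{n}(s,X_{s})\cdot d(Z-Z^{n})_{s}+M_{t}^{n},
\end{align*}
where $M^{n}$ is a martingale built from the L\'evy part of $X$. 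Passing to the limit in $n$ requires PDE stability from \cref{thm:singular}, yielding $u^{n}\to u$ together with the convergence of its paracontrolled derivative, and, crucially, the convergence of both $\int_{0}^{t}\nabla u^{n}(s,X_{s})\diff Z_{s}^{n}$ and $\int_{0}^{t}\nabla u^{n}(s,X_{s})\diff Z_{s}$ to the same stable rough stochastic integral $\int_{0}^{t}\nabla u(s,X_{s})\diff(Z,\mathbb{Z}^{V})_{s}$ constructed in \cref{sec:roughint}. This last step uses that $\nabla u$ is stochastically controlled by $V$ via its paracontrolled representation from \cref{thm:singular} and that $(Z,\mathbb{Z}^{V})$ is a rough stochastic integrator thanks to \eqref{eq:zhoelder1}--\eqref{eq:ahoelder1} and the convergences postulated in \cref{def:ws}; the limit of $M^{n}$ is then the sought martingale.

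For the reverse direction, given a martingale solution $\p$, I would build $Z$ and $\mathbb{Z}^{V}$ via PDE representations. Using the regular-terminal-condition solution $u^{t}$ of $\mathcal{G}^{\calV}u^{t}=V$ with $u^{t}(t,\cdot)=0$ supplied by \cref{thm:singular}, the martingale property yields
\begin{align*}
\E_{s}[Z_{t}-Z_{s}]=\E_{s}[u^{t}(t,X_{t})-u^{t}(s,X_{s})],
\end{align*}
and the $(\alpha+\beta)/\alpha$-H\"older-in-time bound on $u^{t}$ provides the conditional bound in \eqref{eq:zhoelder1}; the unconditional $L^{2}$-bound and the convergence $Z^{n}\to Z$ follow from \cref{lem:hd} combined with PDE stability. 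For $\mathbb{Z}^{V}$, I would run the analogous argument with the singular-terminal-condition PDE \eqref{eq:v}. After subtracting $J^{T}(\partial_{i}V^{j})(s,X_{s})\E_{s}[Z_{st}^{i}]$, the bound on $\norm{\mathbb{Z}^{V}\mid\F}_{(2\alpha+2\beta-1)/\alpha,\infty}$ reduces exactly to the class-$\mathcal{K}^{(2\alpha+2\beta-1)/\alpha}(\calV)$ property of $X$ from \cref{def:class}. The $L^{2}$-bounds on $\mathbb{Z}^{V}$ and the convergence of $\mathbb{Z}^{m,n}$ are supplied by a stochastic-sewing argument in the spirit of \cref{lem:generalws}, combined once more with the stability in \cref{thm:singular}.

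The principal obstacle is establishing the class-$\mathcal{K}^{(2\alpha+2\beta-1)/\alpha}(\calV)$ property for the canonical process in the rough regime. Naively each of the increments $v^{t,i,j}(t,X_{t})-v^{t,i,j}(s,X_{s})$ and $J^{T}(\partial_{i}V^{j})(s,X_{s})(u^{t,i}(t,X_{t})-u^{t,i}(s,X_{s}))$ only has time regularity $(\alpha+\beta)/\alpha$, so the improved exponent $(2\alpha+2\beta-1)/\alpha$ can only come from cancellations. I would unfold the paracontrolled decomposition of $v^{t,i,j}$ from \cref{thm:singular} and verify that its rough resonant block is matched precisely by the enhancement $\calV_{2}(i,j)$ appearing in the right-hand side $J^{T}(\partial_{i}V^{j})\cdot V^{i}$ of \eqref{eq:v}. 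This is the point where the singular-terminal-condition theory of \cite{kp-sk} imported through \cref{thm:singular} becomes indispensable and where the rough regime departs essentially from the Young case.
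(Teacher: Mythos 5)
Your decomposition into the two implications (\cref{thm:wsimp,thm:mpiws}) matches the paper exactly, and your sketch of the ``rough weak $\Rightarrow$ martingale'' direction is essentially the paper's proof of \cref{thm:wsimp}: regularize the PDE, apply the Dirichlet-process It\^o formula, rewrite the drift mismatch as $\int\nabla u^{n}\cdot d(Z-Z^{n})$, and kill it in the limit via the stability of the rough stochastic integral (\cref{thm:stabilityint}). You also correctly single out the class-$\calK^{(2\alpha+2\beta-1)/\alpha}(\calV)$ estimate as the crux of the reverse direction, and your heuristic (the rough resonant block of $v^{t,i,j}$ must be matched by $\calV_{2}(i,j)$ from the right-hand side of \eqref{eq:v}) is the right intuition for the cancellation exploited in \cref{thm:class}, which the paper realizes via the semigroup $T_{s,r}$ and the auxiliary Kolmogorov solutions $y^{r}=T_{\cdot,r}V_r$ and $w^{r}=T_{\cdot,r}(J^{T}(\partial_i V)\cdot V_r)$ with singular paracontrolled terminal data.

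There is, however, a genuine gap in your sketch of the ``martingale $\Rightarrow$ rough weak'' direction. You write $\E_{s}[Z_{t}-Z_{s}]=\E_{s}[u^{t}(t,X_{t})-u^{t}(s,X_{s})]$ and propose to read off the bounds from there, but this presupposes that $Z$, $L$, and the decomposition $X=x+Z+L$ already exist on a suitable stochastic basis. A martingale solution is only a law on Skorokhod space; it does not come equipped with a filtered probability space carrying a L\'evy process $L$ and a continuous drift $Z$ for which one may apply It\^o or even write $Z_{st}$. The paper's proof of \cref{thm:mpiws} solves this by first working with the \emph{strong} solutions $X^{n}$ of the regularized SDEs, for which the It\^o formula is unproblematic and which yield the explicit quantities $Z^{m,n}$, $\mathbb{Z}^{l,m,n}$; it then establishes tightness of the family $(X^n,Z^{m,n},\mathbb{Z}^{l,m,n})$ via the $L^p$-moment bounds of \cref{lem:hd} and Kolmogorov's criterion, identifies the limit using the uniqueness in \cref{thm:mainthm1}, and invokes Skorokhod's representation theorem to produce a probability space, a L\'evy process $L$, and processes $(Z,\mathbb{Z}^{V})$ on which \cref{def:ws} can be verified a.s. Your sketch omits this construction, and without it the conditional-expectation identities you aim to use are not even well-posed. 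Everything downstream (the class-$\calK$ estimate via \cref{thm:class}, the convergences $1.)$ and $2.)$ via the Lipschitz continuity from \cref{thm:singular} and \cite[Corollary 4.14]{kp-sk}) is correct once the Skorokhod step is in place.
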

\noindent The second main theorem of this work generalizes Itô's formula for rough weak solutions. We state an informal version of \cref{thm:genIto} in \cref{thm:igenIto} below.
\begin{theorem}\label{thm:igenIto}
Let $\alpha\in (1,2]$, $\beta\in (\frac{2-2\alpha}{3},0)$
and $V\in\calX^{\beta,\gamma}$. Let $(X,L,\mathbb{Z}^{V})$ be a rough weak solution and
let $u\in C_{T}\calC^{\alpha+\beta}\cap C^{1}([0,T],\calC^{\beta})$ be such that $(\partial_{t}-\La)u$ is paracontrolled by $V$. 
Then, if $\alpha\in (1,2)$, the following Itô-formula holds true:
\begin{align*}
u(t,X_{t})&=u(0,x)+\int_{0}^{t}(\partial_{s}-\La)u(s,X_{s})ds + \int_{0}^{t}\nabla u(s,X_{s})\cdot d(Z,\mathbb{Z}^{V})_{s}\\&\qquad+\int_{0}^{t}\int_{\R^{d}\setminus\{0\}}[u(s,X_{s-}+y)-u(s,X_{s-})]\hat{\pi}(ds,dy),
\end{align*} where $Z=X-x-L$ and $\hat{\pi}$ denotes the compensated Poisson random measure of $L$. If $\alpha=2$ and $L=B$ for a Brownian motion $B$, the martingale is replaced by $\int_{0}^{t}\nabla u(s,X_{s})\cdot dB_{s}$.
\end{theorem}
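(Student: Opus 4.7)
The plan is to establish the formula first for smooth mollifications $u^n$ of $u$ via the classical It\^o formula for the Dirichlet process $X=x+Z+L$, and then pass to the limit $n\to\infty$. Convolve $u$ with a standard spatial mollifier to obtain $u^n\in C^1_T C^\infty_b$ with $u^n\to u$ in $C_T\calC^{\alpha+\beta-\epsilon}\cap C^1_T\calC^{\beta-\epsilon}$ for every $\epsilon>0$. Because the $L^2$-H\"older exponent in \eqref{eq:zhoelder1} satisfies $(\alpha+\beta)/\alpha>1/2$, $Z$ has vanishing quadratic variation and $X$ is a Dirichlet process (cf.\ \cref{rem:Dirichlet}). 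The Cont--Jaramillo-L\'opez-Montes It\^o formula \cite[Theorem 3.1]{cjml}, extended to time-dependent $C^{1,2}$ integrands, then applies to $u^n(t,X_t)$; regrouping the jump sum of $L$ with the small-jump compensator $\mathbf 1_{\abs y\leqslant 1}\nabla u^n\cdot y$ and invoking the identification $A=-\La$ on $C^\infty_b$ from \cref{rem:lgen} together with \eqref{eq:functional} yields
\begin{align*}
u^n(t,X_t)&=u^n(0,x)+\int_0^t(\partial_s-\La)u^n(s,X_s)\diff s+\int_0^t\nabla u^n(s,X_s)\diff Z_s\\
&\quad+\int_0^t\int_{\R^d\setminus\{0\}}[u^n(s,X_{s-}+y)-u^n(s,X_{s-})]\hat\pi(\diff s,\diff y).
\end{align*}

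The left-hand side and the term $u^n(0,x)$ pass to the limit via the embedding $\calC^{\alpha+\beta}\hookrightarrow L^\infty$ (valid since $\alpha+\beta>0$). The compensated Poisson integral converges in $L^2(\p)$ to its $u$-analogue by the Burkholder--Davis--Gundy inequality \cite[Lemma 8.22]{peszat_zabczyk_2007}, using $\abs{(u^n-u)(\cdot+y)-(u^n-u)(\cdot)}\lesssim\norm{u^n-u}_{\calC^{\alpha+\beta}}(\abs y^{(\alpha+\beta)\wedge 1}\wedge 1)$ combined with $\int(\abs y^2\wedge 1)\mu(\diff y)<\infty$. For the rough integral term $\int_0^t\nabla u^n(s,X_s)\diff Z_s$, the assumption that $(\partial_t-\La)u$ is paracontrolled by $V$, combined with heat-semigroup Schauder inversion in the spirit of \cref{thm:singular}, implies that $\nabla u$ admits a paracontrolled decomposition of the form $\nabla u=f'\para J^T(V)+(\nabla u)^\sharp+\dots$, so that $s\mapsto\nabla u(s,X_s)$ is a stochastically controlled integrand in the sense of \cref{sec:roughint}. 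The rough stochastic integral $\int_0^\cdot\nabla u(s,X_s)\diff(Z,\mathbb Z^V)_s$ is therefore well-defined, and its stability under smooth approximations (where it reduces to the classical integral against $Z$) provides the required $L^2(\p)$-convergence.

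The main obstacle is the drift term $\int_0^t(\partial_s-\La)u^n(s,X_s)\diff s$: since $(\partial_t-\La)u\in C_T\calC^\beta$ with $\beta<0$, the pointwise evaluation has no classical meaning, so the limit integral must be \emph{defined} as the $L^2(\p)$-limit of the mollified drifts. Decomposing $(\partial_t-\La)u$ into its paracontrolled pieces relative to $V$ and applying the stochastic sewing lemma \cref{lem:ss} to the germs arising from each piece yields the convergence; the cancellations closing the identity are exactly those encoded by the iterated integrator $\mathbb Z^V$ and the class $\calK^\vartheta$ bounds of \cref{def:class}, in parallel to the equivalence argument behind \cref{thm:mainthm}. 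In the case $\alpha=2$, $L=B$ is continuous, the compensated Poisson integral is replaced by the It\^o martingale $\int_0^t\nabla u(s,X_s)\diff B_s$, no jump corrections need to be tracked, and the remaining convergence steps carry over verbatim.
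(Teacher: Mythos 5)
Your overall skeleton — mollify $u$, apply the Dirichlet-process It\^o formula of \cite[Theorem 3.1]{cjml} to $u^n(t,X_t)$, pass to the limit term by term, identifying the drift integral as an $L^2(\p)$-limit and the rough integral via \cref{thm:roughint} and \cref{thm:stabilityint} — matches the paper. The jump-martingale convergence via BDG and the identification $\int\nabla u^n\,dZ=\int\nabla u^n\,d(Z,\mathbb Z^V)$ for smooth $u^n$ with stability in $D_T(V)$ are also essentially the paper's arguments (the paper mollifies with $u^n=P_{n^{-1}}u$ rather than a spatial convolution, so that the commutator lemma gives $u^{n,\sharp}\to u^\sharp$ in $\mathcal L_T^{0,2(\alpha+\beta)-1}$ without $\epsilon$-loss; your $C_T\calC^{\alpha+\beta-\epsilon}$ statement would need the same commutator argument to control the paracontrolled remainder).

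The genuine gap is in the drift term. You propose to ``decompose $(\partial_t-\La)u$ into its paracontrolled pieces and apply the stochastic sewing lemma to the germs arising from each piece,'' with ``the cancellations encoded by $\mathbb Z^V$ and the class $\calK^\vartheta$ bounds.'' This does not close. First, the rough lift $\mathbb Z^V$ is a correction for integrals of the form $\int A\,dZ$, not for additive functionals $\int v(s,X_s)\,ds$, and $\calK^\vartheta$ is used only in \cref{thm:class} to control $\mathbb Z^V$ for the converse direction of \cref{thm:mainthm}; neither object enters the drift-term analysis. Second, each piece of the paracontrolled decomposition is itself singular: $v'\para V$ has spatial regularity $\beta<0$ and $v^\sharp\in C_T\calC^{(2-\gamma)\alpha+2\beta-1}$ has regularity at most $0$ on the admissible range of $\gamma$, so neither $v^\sharp(s,X_s)$ nor $(v'\para V)(s,X_s)$ is a well-defined germ and there is nothing to sew. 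The mechanism the paper actually uses is different: it first invokes \cref{thm:wsimp} to conclude that $X$ is a martingale solution and hence a strong Markov process with semigroup $(T_{s,r})$, then applies the stochastic sewing lemma to the single germ $\Xi_{s,t}:=\int_s^t T_{s,r}v_r(X_s)\,dr$, where $T_{s,r}v_r=y^r_s$ solves the singular Kolmogorov backward PDE $\mathcal G^{\calV}y^r=0$ with the paracontrolled terminal condition $v_r$ (this is where \cref{thm:singular} enters). The Markov property makes $\E_s[\delta\Xi_{s,u,t}]=0$ exactly, and the $L^2$ bound comes from $y^r\in\mathcal M_r^{-\beta/\alpha}L^\infty$ uniformly in $r$. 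Stability of this sewing integral under $v^n\to v$ then identifies the limit with $\lim_n\int_0^t v^n(s,X_s)\,ds$. Your sketch replaces this with an argument that would fail at the very first step.
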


\noindent To prove \cref{thm:mainthm}, it turns out that for a weak solution $X$ we need to define the limit of the stochastic integrals 
\begin{align*}
\int_{0}^{t}\nabla u^{m}(s,X_{s})\cdot dZ_{s}=\sum_{i=1}^{d}\int_{0}^{t}\partial_{i}u^{m}(s,X_{s})dZ_{s}^{i},
\end{align*} as $m\to\infty$, where $u^{m}\in C^{1,2}_{b}([0,T]\times\R^{d})$ with $u^{m}\to u$ and $u$ solves the Kolmogorov backward equation (for regular data $f,u^{T}$). In the rough regime, $\partial_{i} u$ won't have enough regularity, such that we can define the integral of $\partial_{i}u(s,X_{s})$ against $Z$ in a stable manner utilizing \cref{lem:generalws}. Indeed, a regularity counting argument yields that the time regularity of $\partial_{i} u$, i.e. $\partial_{i}u\in C_{T}^{(\alpha+\beta-1)/\alpha}L^{\infty}$, together with the $(\alpha+\beta)/\alpha$-Hölder regularity of $Z$ in $L^{2}(\p)$ sum up to $(2\alpha+2\beta-1)/\alpha\leqslant 1$ for $\beta\leqslant(1-\alpha)/2$. The idea is thus to enhance $Z$ by $\mathbb{Z}^{V}$ from \cref{def:ws} in order to correct for the irregular terms in $\partial_{i}u$ and to define the stable rough stochastic integral 
\begin{align*}
\int_{0}^{t}\nabla u(s,X_{s})\cdot d(Z,\mathbb{Z}^{V})=\sum_{i=1}^{d}\int_{0}^{t}\partial_{i} u(s,X_{s})d(Z^{i},\mathbb{Z}^{V,i})_{s}.
\end{align*}
For regular integrands $\nabla u^{m}$ the stochastic integral against $Z$ and the rough stochastic integral against $(Z,\mathbb{Z}^{V})$ coincide. This motivates the general theory in the next section.

\end{section}

\begin{section}{A rough stochastic sewing integral}\label{sec:roughint}
In this section, we construct in \cref{thm:roughint} a rough stochastic integral using the stochastic sewing \cref{lem:ss}. The theory is inspired by \cite{le-friz-hocq}. 
Nonetheless, the results from \cite{le-friz-hocq} do not apply in our setting and our rough stochastic integral differs from the one constructed there. Instead of considering a $\gamma$-rough path as an integrator, we consider a lifted stochastic process $(Z,\mathbb{Z}^{A})$ with bounds with respect to the semi-norms $\norm{\cdot}_{\theta,2}$ and $\norm{\cdot\mid\F}_{\theta,\infty}$,  cf.~\eqref{eq:ss-notation}. The Hölder exponent $\sigma$ of $Z$ is assumed to satisfy $\sigma>1/2$. That Hölder-regularity is satisfied in the case of the drift term we are interested in. In this sense the integrator in good. However the integrand is rough and we need to assume that it is stochastically controlled by a given stochastic process $A$.  
We construct the integral in $L^{2}(\p)$, but replacing $2$ by $p\geqslant 2$ in the bounds below, we could more generally construct the integral in $L^{p}(\p)$. We will not bother doing so, as we aim for the situation where the integrand is given by a $\vartheta$-Hölder function of the $\alpha$-stable process for $\vartheta\in (0,\alpha/2)$, which lacks arbitrarily high moments, i.e.~$\E[\abs{L_{1}}^{2\vartheta}]<\infty$ as $2\vartheta<\alpha$, but we may not necessarily have $\E[\abs{L_{1}}^{p\vartheta}]<\infty$ for $p>2$.\\ 
Let us start with the definition of stochastically controlled processes. Let here and below, $(\Omega,\F, (\F_{t})_{t\in[0,T]},\p)$ be a complete filtered probability space.
\begin{definition}[Stochastically controlled processes]
Let $(A_{t})_{t\in[0,T]}$ be an $\R^{d}$-valued adapted stochastic process with $\sup_{t\in[0,T]}\norm{A_{t}}_{L^{\infty}(\p,\R^{d})}<\infty$ and $\norm{A}_{\varsigma,2}<\infty$ for $\varsigma\in (0,1)$. 
We call an adapted stochastic process $(f_{t}, f'_{t})_{t\in [0,T]}$ with $\sup_{t\in[0,T]}[\norm{f_{t}}_{L^{\infty}(\p,\R)}+\norm{f'_{t}}_{L^{\infty}(\p,\R^{d})}]<\infty$ stochastically controlled by $A$, if for $\varsigma,\varsigma'\in (0,1)$ the following bounds hold true
\begin{align*}
\norm{f}_{\varsigma,2}+\norm{f^{\prime}}_{\varsigma',2}<\infty
\end{align*} and the remainder $(R^{f}_{st})_{(s,t)\in\Delta_{T}}$ defined by
\begin{align*}
R^{f}_{st}:=f_{st}-f'_{s}\cdot A_{st},
\end{align*} satisfies 
\begin{align*}
\norm{R^{f}}_{\varsigma+\varsigma',2}<\infty.
\end{align*}  
The remainder is then an adapted process indexed by $\Delta_{T}$. We denote the space of all such $(f,f^{\prime})$, that are stochastically controlled by $A$ by $D^{\varsigma,\varsigma'}_{T}(A)$ and we define the complete norm 
\begin{align*}
\norm{f}_{D^{\varsigma,\varsigma'}_{T}(A)}&:=\sup_{t\in[0,T]}[\norm{f_{t}}_{L^{\infty}(\p)}+\norm{f'_{t}}_{L^{\infty}(\p,\R^{d})}] \\&\qquad+\norm{f}_{\varsigma,2}+\norm{f^{\prime}}_{\varsigma',2}+\norm{R^{f}}_{\varsigma+\varsigma',2}.
\end{align*}
\end{definition}
\begin{definition}[Rough stochastic integrator]\label{def:rough-integrator}
Let $(A_{t})_{t\in [0,T]}$ be an adapted $\R^{d}$-valued stochastic process, which satisfies $\sup_{t\in[0,T]}\norm{A_{t}}_{L^{\infty}(\p,\R^{d})}<\infty$ and $\norm{A}_{\varsigma,2}<\infty$ for $\varsigma\in (0,1)$. Let $\sigma\in (1/2,1)$. 
Then we call $(Z,\mathbb{Z}^{A})$ a rough integrator, if $(Z_{t})_{t\in[0,T]}$, $(\mathbb{Z}^{A}_{st})_{(s,t)\in\Delta_{T}}$ are adapted stochastic processes with $Z$ being $\R$-valued and $\mathbb{Z}^{A}$ being $\R^{d}$-valued, such that $Z_{0}=0$ and for all $0\leqslant s\leqslant l\leqslant t\leqslant T$, the following algebraic relation holds
\begin{align*}
\mathbb{Z}_{st}^{A}=\mathbb{Z}^{A}_{sl}+\mathbb{Z}^{A}_{lt}+A_{sl}Z_{lt}.
\end{align*} 
Furthermore the following Hölder-type moment bounds hold:
\begin{align*}
\norm{(Z,\mathbb{Z}^{A})}_{\mathcal{R}^{\sigma}_{T}(A)}:=\norm{Z}_{\sigma,2}+\norm{Z\mid\F}_{\sigma,\infty}+\norm{\mathbb{Z}^{A}}_{\sigma,2}+\norm{\mathbb{Z}^{A}\mid\F}_{\sigma+\varsigma,\infty}<\infty.
\end{align*} 
We call the space of stochastic processes, that are rough integrators $\mathcal{R}^{\sigma}_{T}(A)$. Equipped with the norm  
\begin{align*}
\norm{(Z,\mathbb{Z}^{A})}_{\mathcal{R}_{T}^{\sigma}(A)}&:=\norm{Z}_{\sigma,2}+\norm{Z-W\mid\F}_{\sigma,\infty}\\&\qquad+\norm{\mathbb{Z}^{A}}_{\sigma,2}+\norm{\mathbb{Z}^{A}\mid\F}_{\sigma+\varsigma,\infty}
\end{align*} the space becomes a Banach space. 
\end{definition}
\begin{remark}
We refer to \cite{fh} for the connection to rough paths. The space of rough stochastic integrators is a vector space, because the algebraic relation here is linear. Furthermore, we assume that $Z_{0}=0$ to obtain a norm.
\end{remark}
\begin{theorem}\label{thm:roughint}
Let $(A_{t})_{t\in [0,T]}$ be an adapted $\R^{d}$-valued stochastic process with $A\in C_{T}L^{\infty}(\p,\R^{d})$ and let $f$ be stochastically controlled by $A$. Let $(Z,\mathbb{Z}^{A})$ be a rough integrator. Let the parameters be such that
\begin{align*}
\sigma+\varsigma'+\varsigma>1.
\end{align*}
Then for $t\in(0,T]$, the rough stochatic sewing integral
\begin{align*}
I(f,Z)_{t}:=\int_{0}^{t}f_{s}\,d(Z,\mathbb{Z}^{A})_{s}=\lim_{\abs{\Pi}\to 0}\sum_{r,l\in\Pi}[f_{r}Z_{rl}+f'_{r}\cdot\mathbb{Z}^{A}_{rl}],
\end{align*} exists in $L^{2}(\p)$, where the limit ranges over partitions $\Pi$ of $[0,t]$ with mesh size $\abs{\Pi}\to 0$. Moreover, the following Lipschitz bound holds:
\begin{align*}
\sup_{(s,t)\in\Delta_{T}}\frac{\norm{I(f,Z)_{s,t}}_{L^{2}(\p)}}{\abs{t-s}^{\sigma}}\lesssim\norm{f}_{D^{\varsigma,\varsigma'}_{T}(A)}\norm{(Z,\mathbb{Z}^{A})}_{\mathcal{R}^{\sigma}_{T}(A)}.
\end{align*} 
Furthermore, if $\sigma+\varsigma>1$, then the rough stochastic integral $I(f,Z)_{t}$ almost surely agrees with the integral $\tilde{I}(f,Z)_{t}=\lim_{\abs{\Pi}\to 0}\sum_{r,l\in\Pi}f_{r}Z_{rl}\in L^{2}(\p)$.
\end{theorem}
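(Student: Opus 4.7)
The plan is to apply the stochastic sewing lemma (Lemma~2.11) to the germ
\begin{align*}
\Xi_{s,t}:=f_{s}Z_{s,t}+f'_{s}\cdot\mathbb{Z}^{A}_{s,t},\qquad (s,t)\in\Delta_{T}.
\end{align*}
Using the Chen-type identity $\mathbb{Z}^{A}_{s,t}=\mathbb{Z}^{A}_{s,l}+\mathbb{Z}^{A}_{l,t}+A_{s,l}Z_{l,t}$ enforced by Definition~3.2, together with $Z_{s,t}-Z_{s,l}=Z_{l,t}$, a direct computation delivers the key algebraic cancellation
\begin{align*}
\delta\Xi_{s,l,t}=-R^{f}_{s,l}\,Z_{l,t}-f'_{s,l}\cdot\mathbb{Z}^{A}_{l,t}.
\end{align*}
The Gubinelli corrector $f'_{s}\cdot\mathbb{Z}^{A}$ appearing in $\Xi$ is engineered precisely to exchange the singular piece $f'_{s}A_{s,l}Z_{l,t}$ for the smaller remainder $R^{f}_{s,l}Z_{l,t}$.

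\textbf{Verifying the sewing hypotheses.} For the unconditional bound I would use that $f,f',A$ are uniformly in $L^{\infty}(\p)$, so $R^{f}_{s,l}$ and $f'_{s,l}$ are uniformly bounded in $L^{\infty}$. Combined with $\norm{Z_{l,t}}_{L^{2}}+\norm{\mathbb{Z}^{A}_{l,t}}_{L^{2}}\lesssim\abs{t-s}^{\sigma}$, this gives $\norm{\delta\Xi_{s,l,t}}_{L^{2}}\lesssim\abs{t-s}^{\sigma}$, and $\sigma>1/2$ from Definition~3.2 provides $\epsilon_{2}:=\sigma-1/2>0$. For the conditional bound, I exploit the $\F_{l}$-measurability of $R^{f}_{s,l}$ and $f'_{s,l}$ and the tower property to write
\begin{align*}
\E_{s}[\delta\Xi_{s,l,t}]=-\E_{s}\bigl[R^{f}_{s,l}\,\E_{l}[Z_{l,t}]\bigr]-\E_{s}\bigl[f'_{s,l}\cdot\E_{l}[\mathbb{Z}^{A}_{l,t}]\bigr],
\end{align*}
and then pair the conditional $L^{\infty}$-Hölder bounds on $Z,\mathbb{Z}^{A}$ with the $L^{2}$-Hölder bounds on $R^{f},f'$ (and Jensen on the outer conditional expectation) to obtain
\begin{align*}
\norm{\E_{s}[\delta\Xi_{s,l,t}]}_{L^{2}}\lesssim\abs{t-l}^{\sigma}\abs{l-s}^{\varsigma+\varsigma'}+\abs{t-l}^{\sigma+\varsigma}\abs{l-s}^{\varsigma'}\lesssim\abs{t-s}^{\sigma+\varsigma+\varsigma'},
\end{align*}
whose exponent exceeds $1$ by the hypothesis $\sigma+\varsigma+\varsigma'>1$. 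Lemma~2.11 then yields the adapted process $I(f,Z)$ with the asserted Riemann-sum convergence in $L^{2}$ and the estimate $\norm{I(f,Z)_{s,t}-\Xi_{s,t}}_{L^{2}}\lesssim\abs{t-s}^{\sigma}$; adding the trivial estimate $\norm{\Xi_{s,t}}_{L^{2}}\lesssim\abs{t-s}^{\sigma}$ yields the Lipschitz bound, with the constant factoring through $\norm{f}_{D^{\varsigma,\varsigma'}_{T}(A)}\norm{(Z,\mathbb{Z}^{A})}_{\mathcal{R}^{\sigma}_{T}(A)}$.

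\textbf{Reduction to the Young-type integral.} For the final claim I would rerun sewing, under $\sigma+\varsigma>1$, on the simpler germ $\tilde\Xi_{s,t}:=f_{s}Z_{s,t}$, whose coboundary is $\delta\tilde\Xi_{s,l,t}=-f_{s,l}Z_{l,t}$: the unconditional $L^{2}$-bound is again $\abs{t-s}^{\sigma}$, and the conditional bound becomes $\abs{t-l}^{\sigma}\abs{l-s}^{\varsigma}\lesssim\abs{t-s}^{\sigma+\varsigma}$ (using $\norm{f_{s,l}}_{L^{2}}\lesssim\abs{l-s}^{\varsigma}$), so $\tilde I(f,Z)_{t}=\lim_{\abs{\Pi}\to 0}\sum_{r,l}f_{r}Z_{r,l}$ exists in $L^{2}$. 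The identity $I=\tilde I$ then reduces to showing $S_{\Pi}:=\sum_{i}f'_{t_{i}}\cdot\mathbb{Z}^{A}_{t_{i},t_{i+1}}\to 0$ in $L^{2}$, which I would obtain from the Doob-type splitting $\mathbb{Z}^{A}_{t_{i},t_{i+1}}=\E_{t_{i}}[\mathbb{Z}^{A}_{t_{i},t_{i+1}}]+\tilde M_{i}$ with $\E_{t_{i}}[\tilde M_{i}]=0$: the predictable part obeys
\begin{align*}
\norm[\big]{\textstyle\sum_{i}f'_{t_{i}}\E_{t_{i}}[\mathbb{Z}^{A}_{t_{i},t_{i+1}}]}_{L^{2}}\lesssim\textstyle\sum_{i}\abs{t_{i+1}-t_{i}}^{\sigma+\varsigma}\lesssim T\,\abs{\Pi}^{\sigma+\varsigma-1}\to 0
\end{align*}
by $\sigma+\varsigma>1$, while the martingale-difference sequence $(f'_{t_{i}}\tilde M_{i})_{i}$ satisfies by orthogonality
\begin{align*}
\E\bigl[\bigl(\textstyle\sum_{i}f'_{t_{i}}\tilde M_{i}\bigr)^{2}\bigr]=\textstyle\sum_{i}\E[(f'_{t_{i}})^{2}\tilde M_{i}^{2}]\lesssim\textstyle\sum_{i}\abs{t_{i+1}-t_{i}}^{2\sigma}\lesssim T\,\abs{\Pi}^{2\sigma-1}\to 0
\end{align*}
by $\sigma>1/2$.

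\textbf{Main obstacle.} The conceptual heart is the algebraic decomposition of $\delta\Xi$ above. The delicate point is that no multiplicative temporal cancellation arises in $\delta\Xi$ itself: the full conditional decay $\abs{t-s}^{\sigma+\varsigma+\varsigma'}$ needed to reach the sewing exponent $1+\epsilon_{1}$ comes \emph{entirely} from the $\F_{l}$-measurability of the increments $R^{f}_{s,l}$ and $f'_{s,l}$ played against the conditional-mean control of $Z$ and $\mathbb{Z}^{A}$. This is precisely the raison d'être of the class $D^{\varsigma,\varsigma'}_{T}(A)$ of stochastically controlled integrands and of the elevated conditional norm $\norm{\mathbb{Z}^{A}\mid\F}_{\sigma+\varsigma,\infty}$ imposed in Definition~3.2.
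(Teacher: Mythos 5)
Your argument is correct and, for the construction of $I(f,Z)$ and the Lipschitz estimate, follows the same route as the paper: the same germ $\Xi_{st}=f_sZ_{st}+f'_s\cdot\mathbb{Z}^A_{st}$, the same Chen-type cancellation $\delta\Xi_{s,l,t}=-R^f_{s,l}Z_{l,t}-f'_{s,l}\cdot\mathbb{Z}^A_{l,t}$, the same crude unconditional $L^2$ bound from $L^\infty$-boundedness of $R^f$ and $f'$, and the same pairing of the $L^2$-Hölder norms of $R^f$, $f'$ with the conditional $L^\infty$-Hölder norms of $Z$, $\mathbb{Z}^A$ to reach the exponent $\sigma+\varsigma+\varsigma'>1$. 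Where you genuinely diverge is in showing $I=\tilde{I}$ when $\sigma+\varsigma>1$: you construct $\tilde{I}$ by sewing the reduced germ $\tilde{\Xi}_{st}=f_sZ_{st}$ and then prove directly that the Riemann-sum defect $S_\Pi=\sum_i f'_{t_i}\cdot\mathbb{Z}^A_{t_i,t_{i+1}}$ vanishes in $L^2$ via a Doob splitting, bounding the predictable part by $|\Pi|^{\sigma+\varsigma-1}$ through $\norm{\mathbb{Z}^A\mid\F}_{\sigma+\varsigma,\infty}$ and the martingale-difference part by $|\Pi|^{\sigma-1/2}$ through orthogonality. The paper instead invokes the uniqueness clause of \cref{lem:ss}: it checks that the already-constructed $I(f,Z)$ satisfies the two defining bounds relative to $\tilde{\Xi}$ (splitting off $\norm{\E_s[f'_s\mathbb{Z}^A_{st}]}_{L^\infty}\lesssim|t-s|^{\sigma+\varsigma}$ by the triangle inequality), so that uniqueness forces $I=\tilde{I}$. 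Your version is more constructive and self-contained (it exhibits the convergence $S_\Pi\to 0$ by hand), while the paper's is shorter because it reuses the uniqueness already built into the sewing lemma; both are valid. One cosmetic point: in the orthogonality display, $(f'_{t_i})^2\tilde{M}_i^2$ stands for the vector-valued $|f'_{t_i}\cdot\tilde{M}_i|^2$, which should be bounded by $\norm{f'_{t_i}}_{L^\infty(\p,\R^d)}^2\,\E[|\tilde{M}_i|^2]$.
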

\begin{proof}
Define for $r<s$, $\Xi_{rs}:=f_{r}Z_{rs}+f_{r}^{\prime}\cdot\mathbb{Z}^{A}_{rs}$. Then we have that for $r<l<s$, by the algebraic relation of $Z,\mathbb{Z}^{A}$ and the definition of the remainder $R=R^{f}$,
\begin{align*}
\delta\Xi_{r,l,s}=\Xi_{rs}-\Xi_{rl}-\Xi_{ls}&=-R_{r,l}Z_{l,s}-f_{r,l}'\cdot\mathbb{Z}^{A}_{l,s}.
\end{align*}
To apply the stochastic sewing \cref{lem:ss}, we need to show the bounds on the expectation and conditional expectation.
We start with the bound on the expectation. As $\sigma>1/2$ it suffices to trivially estimate the $L^{2}$-norm using  
\begin{align*}
\sup_{r,l\in\Delta_{T}}\norm{R_{r,l}}_{L^{\infty}}\leqslant 2\sup_{t\in[0,T]}[\norm{f_{t}}_{L^{\infty}(\p)}+\norm{f'_{t}}_{L^{\infty}(\p,\R^{d})}\norm{A_t}_{L^{\infty}(\p,\R^{d})}]<\infty,
\end{align*} such that
\begin{align*}
\E[\abs{\delta\Xi_{r,l,s}}^{2}]^{1/2}&\lesssim\sup_{r,l\in\Delta_{T}}\norm{R_{r,l}}_{L^{\infty}}\norm{Z_{l,s}}_{L^{2}}+ \sup_{t\in[0,T]}\norm{f'_{t}}_{L^{\infty}(\p,\R^{d})}\norm{\mathbb{Z}^{A}_{l,s}}_{L^{2}} \\&\lesssim\norm{f}_{D_{T}^{\varsigma,\varsigma'}(A)}\norm{(Z,\mathbb{Z}^{A})}_{\mathcal{R}_{T}^{\sigma}(A)}\abs{s-r}^{\sigma}.
\end{align*} 
For the conditional expectation, we have
\begin{align*}
\E[\abs{\E_{r}[\delta\Xi_{r,l,s}]}^{2}]&\leqslant\E\bigl[\E_{r}[\abs{R_{r,l}\E_{l}[Z_{l,s}]}]^{2}\bigr]+\E\bigl[\E_{r}[\abs{f'_{r,l}\cdot\E_{l}[\mathbb{Z}^{A}_{l,s}]}]^{2}\bigr]\\&\leqslant\norm{R_{r,l}}_{L^{2}(\p)}^{2}\norm{\E_{l}[Z_{l,s}]}_{L^{\infty}}^{2}+\norm{f'_{r,l}}_{L^{2}(\p,\R^{d})}^{2}\norm{\E_{l}[\mathbb{Z}^{A}_{l,s}]}_{L^{\infty}(\p,\R^{d})}^{2}
\\&\leqslant \norm{f}_{D_{T}^{\varsigma,\varsigma'}(A)}\norm{(Z,\mathbb{Z}^{A})}_{\mathcal{R}_{T}^{\sigma}(A)}\abs{s-r}^{2(\varsigma'+\sigma+\varsigma)},
\end{align*} with $\varsigma'+\sigma+\varsigma>1$ by assumption. Thus \cref{lem:ss} applies for the existence of $I(f,Z)_{t}$, $t\in[0,T]$. The prescribed Lipschitz bound follows from the bound on $\norm{I(f,Z)_{st}-\Xi_{st}}_{L^{2}}$ from \cref{lem:ss} and $\norm{\Xi_{st}}_{L^{2}}\leqslant\norm{f}_{D_{T}^{\varsigma,\varsigma'}(A)}[\norm{Z}_{\sigma,2}+\norm{\mathbb{Z}^{A}}_{\sigma,2}]\abs{t-s}^{\sigma}$.\\ If $\sigma+\varsigma>1$, then by the uniqueness of the stochastic sewing integral
\begin{align*} 
\tilde{I}(f,Z)_{t}= \lim_{\abs{\Pi}\to 0}\sum_{r<l\in\Pi}f_{r}Z_{rl}\in L^{2}(\p),
\end{align*} 
we obtain that $\tilde{I}(f,Z)_{t}=I(f,Z)_{t}$ almost surely. Indeed, this uses that $I$ satisfies the bounds for the germ $\tilde{\Xi}_{st}=f_{s}Z_{st}$ of $\tilde{I}$, that is
\begin{align*}
\norm{I(f,Z)_{st}-f_{s}Z_{st}}_{L^{2}}&\leqslant\norm{I(f,Z)_{st}}_{L^{2}}+\norm{f_{s}Z_{st}}_{L^{2}}\lesssim\abs{t-s}^{\sigma}, 
\end{align*} with $\sigma>1/2$ and using the bound of $I(f,Z)$ from \cref{lem:ss},
\begin{align*}
\norm{\E_{s}[I(f,Z)_{s,t}-f_{s}Z_{st}]}_{L^{2}}&\leqslant\norm{\E_{s}[I(f,Z)_{s,t}-f_{s}Z_{st}-f^{\prime}_{s}\mathbb{Z}^{A}_{st}]}_{L^{2}}+\norm{\E_{s}[f^{\prime}_{s}\mathbb{Z}^{A}_{st}]}_{L^{2}}
\\&\leqslant\norm{\E_{s}[I(f,Z)_{s,t}-f_{s}Z_{st}-f^{\prime}_{s}\mathbb{Z}^{A}_{st}]}_{L^{2}}+\norm{\E_{s}[f^{\prime}_{s}\mathbb{Z}^{A}_{st}]}_{L^{\infty}}
\\&\lesssim\abs{t-s}^{\sigma+\varsigma},
\end{align*} where $\sigma+\varsigma>1$ by assumption.
\end{proof}
\end{section}
\begin{section}{Equivalence of weak solution concepts}\label{sec:wsemp}
In this section, we prove in \cref{thm:wsimp,thm:mpiws} showing that the weak solution concept from \cref{def:ws} yields an equivalent notion of solution to the martingale solutions. Furthermore, we show in \cref{thm:genIto} an extension of the Itô formula for rough weak solutions.\\ 
To prove that a weak solution solves the martingale problem
we employ the stability of the rough stochastic integral. In functionalanalytic terms, the rough stochastic integral is the unique extension of the integral of regular integrands $u(s,X_{s})$, i.e.~for $u\in C^{1,2}_{b}$, against $Z$ to paracontrolled integrands:
\begin{align}\label{eq:target-r-int}
D_{T}(V)\times \mathcal{R}_{T}(V)\ni (u,(Z,\mathbb{Z}^{V}))\mapsto\int_{0}^{t}\nabla u(s,X_{s})\cdot d(Z,\mathbb{Z}^{V})_{s}\in L^{2}(\p).
\end{align}
Here $D_{T}(V)$ denotes the space of paracontrolled distributions defined by 
\begin{align}\label{def:good-para-dist}
D_{T}(V)=\bigl\{(u,u')\in \mathcal{L}_{T}^{0,\alpha+\beta}\times\mathcal{L}_{T}^{0,\alpha+\beta-1}\bigm| u^{\sharp}:=u-u'\para J^{T}(V)\in\mathcal{L}_{T}^{0,2(\alpha+\beta)-1}\bigr\}
\end{align} and
\begin{align*}
\mathcal{R}_{T}(V):=\bigtimes_{i=1}^{d}\mathcal{R}_{T}^{(\alpha+\beta)/\alpha}(A^{i}),
\end{align*} for $A^{i}=(J^{T}(\partial_{i}V)(t,X_{t}))_{t\in[0,T]}$ and $\mathcal{R}_{T}^{\sigma}(A)$ from \cref{def:rough-integrator} in \cref{sec:roughint}.\\
To obtain stability of the rough integral \eqref{eq:target-r-int}, we apply \cref{thm:roughint}, for any $i=1,\dots,d$, to $f=(\partial_{i} u(t,X_{t}))_{t\in[0,T]}$ and the rough integrator $(Z^{i},\mathbb{Z}^{V,i})$ that is given by the definition of a weak solution (cf.~\cref{def:ws}).\\ 
\cref{lem:nablau-stochcont} below shows that, if $u\in D_{T}(V)$, then $(\partial_{i} u(t,X_{t}))$ is stochastically controlled by $(A_{t}^{i})=(J^{T}(\partial_{i} V)(t,X_{t}))$ for $\varsigma=\varsigma'=(\alpha+\beta-1)/\alpha$. 
The following lemma verifies that $(Z^{i},\mathbb{Z}^{V,i})$ is a rough stochastic integrator in the sense of \cref{def:rough-integrator}.
\begin{lemma}\label{lem:z-rough-integrator}
Let $\beta\in (\frac{2-2\alpha}{3},0)$ and $V\in\calX^{\beta,\gamma}$. Let $(X,L,\mathbb{Z}^{V})$ be a weak solution. Then, $(Z,\mathbb{Z}^{V})\in\mathcal{R}_{T}(V)$, i.e.~$(Z^{i},\mathbb{Z}^{V,i})\in\mathcal{R}_{T}^{(\alpha+\beta)/\alpha}(A^i)$ for $A^i=(J^{T}(\partial_{i} V)(t,X_{t}))_{t\in[0,T]}$ and $i=1,\dots,d$. Moreover, the following convergence holds for $n,m\to\infty$ 
\begin{align*}
\norm{(Z^{n},\mathbb{Z}^{m,n})-(Z,\mathbb{Z}^{V})}_{\mathcal{R}_{T}(V)}\to 0.
\end{align*}
\end{lemma}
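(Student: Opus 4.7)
The plan is to verify, for each $i = 1, \dots, d$, the three ingredients required by \cref{def:rough-integrator}: (a) the auxiliary process $A^i := (J^T(\partial_i V)(t, X_t))_{t \in [0,T]}$ belongs to $C_T L^\infty(\p, \R^d)$ with $\norm{A^i}_{\varsigma, 2} < \infty$ for $\varsigma = (\alpha+\beta-1)/\alpha$; (b) $(Z^i, \mathbb{Z}^{V,i})$ satisfies the algebraic relation
\begin{align*}
\mathbb{Z}^{V,i}_{st} = \mathbb{Z}^{V,i}_{sl} + \mathbb{Z}^{V,i}_{lt} + A^i_{sl} Z^i_{lt}
\end{align*}
almost surely for every $0 \leqslant s \leqslant l \leqslant t \leqslant T$; and (c) the four Hölder-moment bounds in \cref{def:rough-integrator} with $\sigma = (\alpha+\beta)/\alpha$ and $\sigma + \varsigma = (2\alpha+2\beta-1)/\alpha$. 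Point (c), as well as the convergence $\norm{(Z^n, \mathbb{Z}^{m,n}) - (Z, \mathbb{Z}^V)}_{\mathcal{R}_T(V)} \to 0$, is essentially built into \cref{def:ws} via the requirements \eqref{eq:zhoelder1}, \eqref{eq:ahoelder1} and the two convergence statements 1.) and 2.); one merely has to recognise that the seminorms featuring in $\norm{\cdot}_{\mathcal{R}_T(V)}$ are exactly those appearing in \cref{def:ws}.

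For (a), Schauder estimates for the $\La$-semigroup yield $J^T(\partial_i V) \in C_T^1 \calC^{\beta-1} \cap C_T \calC^{\alpha+\beta-1}$, and interpolation between these two endpoints gives $J^T(\partial_i V) \in C_T^{(\alpha+\beta-1)/\alpha} L^\infty$ (the exponent $(\alpha+\beta-1)/\alpha$ comes out of the balance $\beta-1+\theta\alpha = 0$, $\theta = (1-\beta)/\alpha$). The boundedness $\sup_t \norm{A^i_t}_{L^\infty(\p)} < \infty$ follows from the embedding $\calC^{\alpha+\beta-1} \hookrightarrow L^\infty$, valid since $\alpha+\beta-1 > (\alpha-1)/3 > 0$ under the hypotheses on $\alpha,\beta$. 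To bound $\norm{A^i_t - A^i_s}_{L^2}$, one splits the increment as
\begin{align*}
A^i_t - A^i_s = \bigl[J^T(\partial_i V)(t, X_t) - J^T(\partial_i V)(s, X_t)\bigr] + \bigl[J^T(\partial_i V)(s, X_t) - J^T(\partial_i V)(s, X_s)\bigr].
\end{align*}
The first summand is bounded by $\norm{J^T(\partial_i V)}_{C_T^{(\alpha+\beta-1)/\alpha} L^\infty} \abs{t-s}^{(\alpha+\beta-1)/\alpha}$ uniformly in $\omega$, while the second is controlled by $\norm{J^T(\partial_i V)}_{C_T \calC^{\alpha+\beta-1}} \abs{X_t - X_s}^{\alpha+\beta-1}$. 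Using $X = x + Z + L$, Jensen's inequality combined with \eqref{eq:zhoelder1} takes care of the $Z$-contribution, while scaling and stationarity of $L$ together with the moment estimate $\E[\abs{L_1}^{2(\alpha+\beta-1)}] < \infty$ (valid since $2(\alpha+\beta-1) < \alpha$) handle the $L$-contribution, yielding the required $\abs{t-s}^{(\alpha+\beta-1)/\alpha}$-bound.

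For (b), Chasles' relation for the integral $\mathbb{A}^{m,n}_{st} = \int_s^t J^T(\partial_i V^m)(r, X_r) V^{n,i}(r,X_r)\,dr$ together with the defining decomposition $\mathbb{Z}^{m,n,i}_{st} = \mathbb{A}^{m,n,i}_{st} - J^T(\partial_i V^m)(s, X_s) Z^{n,i}_{st}$ yield, at the approximation level,
\begin{align*}
\mathbb{Z}^{m,n,i}_{st} - \mathbb{Z}^{m,n,i}_{sl} - \mathbb{Z}^{m,n,i}_{lt} = \bigl[J^T(\partial_i V^m)(l, X_l) - J^T(\partial_i V^m)(s, X_s)\bigr] Z^{n,i}_{lt} = A^{m,i}_{sl}\, Z^{n,i}_{lt}.
\end{align*}
Passing to the limit in $L^2$ as $m, n \to \infty$, the left-hand side converges by condition 2.) of \cref{def:ws}, while the right-hand side converges since $A^{m,i} \to A^i$ in $C_T L^\infty(\p)$ (Schauder applied to $V^m - V \to 0$ in $C_T \calC^{\beta+(1-\gamma)\alpha}$) and $Z^{n,i}_{lt} \to Z^i_{lt}$ in $L^2$ by condition 1.); a subsequence argument then delivers almost-sure equality for fixed $s \leqslant l \leqslant t$. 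The hard part is (a), where one must match the time regularity of $J^T(\partial_i V)$ obtained from Schauder against the spatial Hölder increment of $X$ while staying inside the integrability range $2(\alpha+\beta-1) < \alpha$ dictated by the $\alpha$-stable noise; once (a) is in hand, (b) is a direct algebraic computation at the smooth level followed by a stability passage, and (c) reduces to rereading \cref{def:ws}.
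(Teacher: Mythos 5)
Your proposal is correct and follows essentially the same route as the paper: additivity of $\mathbb{A}^{m,n,i}$ gives the Chen-type algebraic relation at the smooth level, a passage to the limit using the convergences in item 2.) of \cref{def:ws} transfers it to $(Z^i,\mathbb{Z}^{V,i})$, and the moment bounds as well as the convergence in $\norm{\cdot}_{\mathcal{R}_T(V)}$ are a matter of re-reading \eqref{eq:zhoelder1}, \eqref{eq:ahoelder1} and items 1.), 2.) of \cref{def:ws}. The one place where you go beyond the paper's terse three-line argument is your part (a), where you explicitly verify that $A^i = (J^T(\partial_i V)(t, X_t))_t$ meets the standing hypotheses on $A$ in \cref{def:rough-integrator}, namely $\sup_t\norm{A^i_t}_{L^\infty(\p)}<\infty$ and $\norm{A^i}_{(\alpha+\beta-1)/\alpha,2}<\infty$; the paper leaves this unstated (the required Schauder, interpolation and moment estimates appear in essentially the same form in the proof of \cref{lem:nablau-stochcont}, so the omission is benign). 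Filling this in is the right thing to do and your argument — interpolation to get $J^T(\partial_i V)\in C_T^{(\alpha+\beta-1)/\alpha}L^\infty$, the split of the increment into a time and a space part, Jensen on $Z$ and the stable-scaling bound $\E[\abs{L_1}^{2(\alpha+\beta-1)}]<\infty$ since $2(\alpha+\beta-1)<\alpha$ — is correct.
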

\begin{remark}
Notice that we use the notation $\norm{(Z^{n},\mathbb{Z}^{m,n})-(Z,\mathbb{Z}^{V})}_{\mathcal{R}_{T}(V)}$ despite the fact, that $(Z^{n},\mathbb{Z}^{m,n})$ and $(Z,\mathbb{Z}^{V})$ do not live in the same space. The notation means that the four semi-norms in \cref{def:rough-integrator} converge. 
\end{remark}
\begin{proof} 
Let $i=1,\dots,d$. To prove that $(Z^{i},\mathbb{Z}^{V,i})$ is a rough integrator, notice that for $\mathbb{A}^{i}=\lim_{n,m\to\infty}\mathbb{A}^{m,n,i}$ the following additivity holds: $\mathbb{A}^{i}_{st}=\mathbb{A}^{i}_{sl}+\mathbb{A}^{i}_{lt}$ (using that additivity holds for $\mathbb{A}^{m,n,i}$). Thus for $\mathbb{Z}^{V,i}_{st}=\mathbb{A}^{i}_{st}-J^{T}(\partial_{i} V)(s,X_{s})Z^{i}_{st}$, we obtain the algebraic relation
\begin{align*}
\mathbb{Z}^{V,i}_{st}=\mathbb{Z}^{V,i}_{sl}+\mathbb{Z}^{V,i}_{lt}+[J^{T}(\partial_{i} V)(l,X_{l})-J^{T}(\partial_{i} V)(s,X_{s})]Z^{i}_{lt}.
\end{align*} 
Furthermore, \eqref{eq:zhoelder1} and \eqref{eq:ahoelder1} yield the bounds on $Z^{i}$ and $\mathbb{Z}^{V,i}$ and hence $(Z^{i},\mathbb{Z}^{V,i})\in\mathcal{R}_{T}(V^{i})$.
We have that $\mathbb{Z}^{m,n,i}_{st}=\mathbb{A}^{m,n,i}_{st}-J^{T}(\partial_{i} V^{m})(s,X_{s})Z^{n,i}_{st}$. Due to bounds \eqref{eq:zhoelder1} and \eqref{eq:ahoelder1} and the convergence in $2.)$, it follows that $(Z^{n,i},\mathbb{Z}^{m,n,i})\in \mathcal{R}_{T}(V^{m,i}):=\mathcal{R}_{T}^{(\alpha+\beta)/\alpha}((J^{T}(\partial_{i}V^{m})(t,X_{t}))_{t}) $. The convergence with respect to $\norm{\cdot}_{\mathcal{R}_{T}(V)}$ follows directly from the convergence in $2.)$.
\end{proof}
\noindent To prove that $(\partial_{i} u(t,X_{t}))_{t}$ is stochastically controlled by $A=(J^{T}(\partial_{i} V)(t,X_{t}))_{t}$, we will need the following auxillary lemma, which is also of independent interest. Its proof only relies on regularity properties of the solution of the Kolmogorov backward equation. The lemma proves the bound \eqref{eq:para-est} on the time-space differences of the paracontrolled remainder. The proof of the lemma can be found in \cref{Appendix A}.
\begin{lemma}\label{lem:parastr-est}
Let $\alpha\in (1,2]$, $\beta\in(\frac{2-2\alpha}{3},0)$, $\calV\in\calX^{\beta,\gamma}$  
and $u\in D_{T}(V)$ (cf.~\eqref{def:good-para-dist}).
Then, we have the following time-space Hölder bound:
\begin{align}\label{eq:para-est}
\MoveEqLeft
\abs{\partial_{i} u(r,x)-\partial_{i} u(s,y)-\nabla u(s,y)\cdot(J^{T}(\partial_{i} V)(r,x)-J^{T}(\partial_{i} V)(s,y))}\nonumber\\&\lesssim \norm{u}_{D_{T}(V)}(1+\norm{\calV}_{\calX^{\beta,\gamma}})[ \abs{r-s}^{(2(\alpha+\beta)-2)/\alpha}+\abs{x-y}^{2(\alpha+\beta)-2}],
\end{align} for $i=1,\dots,d$.  
\end{lemma}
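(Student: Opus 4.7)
The plan is to view $\partial_i u$ itself as a paracontrolled distribution—with integrator $J^T(\partial_i V)$ and derivative $\nabla u$—and to deduce the pointwise Taylor-type bound from the improved regularity of the resulting sharp remainder. Differentiating the decomposition $u = u^{\sharp} + \nabla u \para J^T V$ (valid since $u \in D_T(V)$) via Bony's Leibniz rule $\partial_i(f\para g) = \partial_i f \para g + f \para \partial_i g$ yields
\begin{align*}
\partial_i u = \partial_i u^{\sharp} + (\partial_i \nabla u)\para J^T V + \nabla u \para J^T(\partial_i V),
\end{align*}
so that the remainder $R := \partial_i u - \nabla u \para J^T(\partial_i V) = \partial_i u^{\sharp} + (\partial_i \nabla u)\para J^T V$ lies in $\mathcal{L}_T^{0,2(\alpha+\beta)-2}$: directly from $u^{\sharp}\in\mathcal{L}_T^{0,2(\alpha+\beta)-1}$ for the first summand, and for the second from the paraproduct estimate \eqref{eq:paraproduct-estimates} for a negative-regularity first factor (using $\partial_i\nabla u \in \mathcal{L}_T^{0,\alpha+\beta-2}$ and Schauder $J^T V \in \mathcal{L}_T^{0,\alpha+\beta}$, with $\norm{\calV}_{\calX^{\beta,\gamma}}$ entering through the latter). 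Since $2(\alpha+\beta)-2>0$ in the prescribed regime, interpolation between $C_T\calC^{2(\alpha+\beta)-2}$ and $C_T^{1}\calC^{2(\alpha+\beta)-2-\alpha}$ gives $\abs{R(r,x) - R(s,y)} \lesssim \norm{u}_{D_T(V)}(1+\norm{\calV}_{\calX^{\beta,\gamma}})(\abs{r-s}^{(2(\alpha+\beta)-2)/\alpha} + \abs{x-y}^{2(\alpha+\beta)-2})$.

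Writing the target as $[R(r,x) - R(s,y)] + T$ with Taylor increment
\begin{align*}
T = [\nabla u \para J^T\partial_i V](r,x) - [\nabla u \para J^T\partial_i V](s,y) - \nabla u(s,y)\cdot\bigl(J^T\partial_i V(r,x) - J^T\partial_i V(s,y)\bigr),
\end{align*}
it remains to control $T$. Set $A(t,z) := [\nabla u \para J^T\partial_i V](t,z) - \nabla u(s,y)\cdot J^T\partial_i V(t,z)$. The identity $c\para g = c(g - S_1 g)$ for constants $c$, which follows at once from $S_{j-1}(c) = c\,\mathbf{1}_{j\geqslant 1}$, enables the split
\begin{align*}
A = \bigl[(\nabla u - \nabla u(s,y))\para J^T(\partial_i V)\bigr] - \nabla u(s,y)\cdot S_1[J^T(\partial_i V)].
\end{align*}
The second summand only involves the spatially smooth, time-continuous low-frequency truncation $S_1 J^T(\partial_i V)$, whose two-point difference is bounded uniformly by $C\min(1, \abs{r-s} + \abs{x-y})$ and hence dominated by the target order, since in our regime both $(2(\alpha+\beta)-2)/\alpha$ and $2(\alpha+\beta)-2$ lie in $(0,1)$.

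The first summand is the core of the argument: a paraproduct whose first factor Hölder-vanishes at $(s,y)$. Expanding via Littlewood-Paley
\begin{align*}
\bigl[(\nabla u(t,\cdot) - \nabla u(s,y))\para J^T(\partial_i V)(t,\cdot)\bigr](z) = \sum_{j\geqslant -1} S_{j-1}[\nabla u(t,\cdot) - \nabla u(s,y)](z)\,\Delta_j J^T(\partial_i V)(t,z),
\end{align*}
together with the pointwise estimate $\abs{S_{j-1}\nabla u(t,\cdot)(z) - \nabla u(s,y)} \lesssim 2^{-j(\alpha+\beta-1)} + \abs{t-s}^{(\alpha+\beta-1)/\alpha} + \abs{z-y}^{\alpha+\beta-1}$ (coming from the space-time Hölder regularity of $\nabla u$) and the Bernstein-type bound $\abs{\Delta_j J^T(\partial_i V)(t,z)} \lesssim 2^{-j(\alpha+\beta-1)}$, I split the sum at a dyadic scale $2^{-j_0} \sim \abs{x-y} + \abs{r-s}^{1/\alpha}$: the high-frequency part ($j>j_0$) is summed geometrically using $\alpha+\beta-1>0$, while for the low-frequency part ($j\leqslant j_0$) the two-point difference is obtained by telescoping and using the gradient Bernstein estimate $\norm{\nabla S_{j-1} f}_{L^{\infty}} \lesssim 2^{j(2-\alpha-\beta)}\norm{f}_{\calC^{\alpha+\beta-1}}$ together with the space-time Hölder continuities of both factors.

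The main obstacle is precisely this last scale-by-scale analysis: individually, $\nabla u$ and $J^T(\partial_i V)$ only enjoy Hölder regularity of order $\alpha+\beta-1$, so any naive pointwise estimate of their paraproduct (or of the product $\nabla u\cdot J^T(\partial_i V)$) yields only an $\alpha+\beta-1$-order bound—short by a factor of two. The quadratic improvement $2(\alpha+\beta)-2$ in both time and space is obtained solely by exploiting the cancellation from freezing $\nabla u$ at $(s,y)$ scale by scale, and it crucially rests on $\alpha+\beta-1 > 0$, which is guaranteed in the regime $\beta > (2-2\alpha)/3 > 1-\alpha$ for $\alpha \in (1,2]$ and makes the relevant geometric series converge.
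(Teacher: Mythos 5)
Your overall strategy is sound and, in outline, parallel to the paper's: both proofs isolate the regular remainder $g:=\partial_i u^\sharp + (\partial_i\nabla u)\para J^T(V)$, bound it by an interpolation argument, and then control a ``frozen-paraproduct'' Taylor increment scale-by-scale via Littlewood--Paley. Where you differ is in the organization of that last step. The paper first freezes the time variable, proving a purely spatial pointwise estimate $\abs{u\para v(x)-u\para v(y)-u(y)(v(x)-v(y))}\lesssim\abs{x-y}^{2\theta-2}$ for $u,v\in\calC^{\theta-1}$, and then treats the time increment separately through the $\para/\reso/\arap$ decomposition of $\nabla u_s\cdot J^T(\partial_i V)_{sr}$ together with the Schauder-interpolated regularity $J^T(\partial_i V)\in C_T^{2(\theta-1)/\alpha}\calC^{(1-\gamma)\alpha-(\theta-1)}$. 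You instead reformulate the Taylor correction via the identity $c\para g = c(g-S_1 g)$ and then split the combined space-time increment at a joint dyadic scale $2^{-j_0}\sim\abs{x-y}+\abs{r-s}^{1/\alpha}$. This is a legitimate alternative route and is perhaps more symmetric; the $S_1$-truncation term plays the role of the paper's $j=-1,0$ terms in the Littlewood--Paley sum.

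However, your treatment of the low-frequency temporal contribution, as written, is not quite sufficient. If you bound $S_{j-1}[\nabla u(r,\cdot)-\nabla u(s,\cdot)]$ only through the $C_T^{(\theta-1)/\alpha}L^\infty$ regularity, the $j\leqslant j_0$ sum of $\abs{r-s}^{(\theta-1)/\alpha}2^{-j(\theta-1)}$ is dominated by $j=-1$ and only produces $\abs{r-s}^{(\theta-1)/\alpha}$, which is strictly weaker than the required $\abs{r-s}^{(2\theta-2)/\alpha}$. The step you call ``space-time Hölder continuities of both factors'' must in fact be the Lipschitz-in-time structure at $\alpha$-lower spatial regularity coming from $\nabla u, J^T(\partial_i V)\in\mathcal{L}_T^{0,\theta-1}\subset C_T^1\calC^{\theta-1-\alpha}$: together with Bernstein this gives $\norm{S_{j-1}[\nabla u(r,\cdot)-\nabla u(s,\cdot)]}_{L^\infty}\lesssim 2^{j(1-\beta)}\abs{r-s}$ (and analogously for the blocks of $J^T(\partial_i V)$), and now $\sum_{j\leqslant j_0}\abs{r-s}2^{j(2-\alpha-2\beta)}\lesssim\abs{r-s}2^{j_0(2-\alpha-2\beta)}\lesssim\abs{r-s}^{(2\theta-2)/\alpha}$ using $2-\alpha-2\beta>0$ and $2^{j_0}\lesssim\abs{r-s}^{-1/\alpha}$. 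This frequency-localized time-difference bound is precisely the ingredient that replaces the paper's interpolated estimates \eqref{eq:goodtimereg1}--\eqref{eq:goodtimereg2}; you should make it explicit, since ``Hölder continuity'' alone does not deliver the quadratic time exponent, and without it the argument is short by a factor of two in the time direction exactly as it would be in space without the scale-by-scale cancellation.

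One further caveat: your dismissal of the $S_1$ correction ``since both exponents lie in $(0,1)$'' silently uses $2(\alpha+\beta)-2<1$. That holds in the rough regime $\beta\leqslant(1-\alpha)/2$ (where the lemma is actually applied), but not in the whole range $\beta\in(\frac{2-2\alpha}{3},0)$ for $\alpha>3/2$; the paper's scale-by-scale estimate of the claim does not need this restriction. If you intend the lemma at full generality, the $S_1$ term should also be handled scale by scale, or you should note the regime restriction.
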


\begin{lemma}\label{lem:nablau-stochcont}
Let $\beta\in ((2-2\alpha)/3,(1-\alpha)/2]$, $\alpha\in (1,2]$. Let $u\in D_{T}(V)$ and let $(X,L,\mathbb{Z}^{V})$ be a weak solution. Then, for every $i=1,\dots,d$, $(\partial_{i} u(t,X_{t}))_{t\in[0,T]}$ is stochastically controlled by $(J^{T}(\partial_{i} V)(t,X_{t}))_{t\in[0,T]}$ with $\varsigma=\varsigma'=(\alpha+\beta-1)/\alpha$.
\end{lemma}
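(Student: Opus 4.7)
The plan is to set $f_t := \partial_i u(t, X_t)$, $f'_t := \nabla u(t, X_t)$ and $A_t := J^T(\partial_i V)(t, X_t)$; all three are adapted since $X$ is, and uniform $L^\infty$-bounds follow from $u \in \mathcal{L}_T^{0,\alpha+\beta} \subset C_T\calC^{\alpha+\beta}$, which yields $\nabla u \in C_T \calC^{\alpha+\beta-1} \hookrightarrow C_T L^\infty$ using $\alpha + \beta > (\alpha+2)/3 > 1$ for $\alpha > 1$. With $\varsigma = \varsigma' = (\alpha+\beta-1)/\alpha$ fixed, the three remaining conditions in the definition of $D_T^{\varsigma,\varsigma'}(A)$ I would check by combining interpolation estimates for $u$ with the a priori regularity of $Z$ given by \eqref{eq:zhoelder1} and the standard moment scaling of the Lévy noise.

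For the Hölder bounds $\norm{f}_{\varsigma, 2}, \norm{f'}_{\varsigma',2} < \infty$ I would first derive the deterministic two-parameter estimate
\[
|\partial_i u(t,x) - \partial_i u(s,y)| \lesssim |t-s|^{(\alpha+\beta-1)/\alpha} + |x-y|^{\alpha+\beta-1}
\]
by interpolating between $\partial_i u \in C_T \calC^{\alpha+\beta-1}$ and $\partial_i u \in C_T^1 \calC^{\beta-1}$ (the latter from differentiating $\partial_t u \in C_T\calC^{\beta}$ spatially), with interpolation exponent $\theta = (1-\beta)/\alpha$, which lies strictly in $(0,1)$ because $\beta-1 < 0 < \alpha+\beta-1$ in the rough range. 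Evaluating at $(t, X_t)$, $(s, X_s)$ and taking $L^2(\p)$-norms, the claim follows from splitting $|X_t - X_s| \leq |Z_t - Z_s| + |L_t - L_s|$, using \eqref{eq:zhoelder1} together with Jensen for the $Z$-increments (legitimate since $2(\alpha+\beta-1) \leq 2$), and stationarity/scaling of $L$ (with $2(\alpha+\beta-1) < \alpha$ throughout the rough range). The same argument applies componentwise to $f'$ and to $A$, so in particular $A$ is itself a valid controlling process in the sense of \cref{def:rough-integrator}.

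For the remainder bound $\norm{R^f}_{\varsigma+\varsigma',2} < \infty$ with $\varsigma+\varsigma' = 2(\alpha+\beta-1)/\alpha$, the decisive input is \cref{lem:parastr-est}, which delivers the pointwise estimate
\[
|R^f_{st}| \lesssim |t-s|^{2(\alpha+\beta-1)/\alpha} + |X_t - X_s|^{2(\alpha+\beta-1)}.
\]
Squaring and taking expectation, I would split $|X_t-X_s|^{4(\alpha+\beta-1)}$ along $Z$ and $L$. For the $Z$ contribution, Jensen's inequality (applicable since $4(\alpha+\beta-1) \leq 2\alpha-2 \leq 2$) combined with \eqref{eq:zhoelder1} gives the scaling $|t-s|^{4(\alpha+\beta-1)(\alpha+\beta)/\alpha}$, which dominates the target $|t-s|^{4(\alpha+\beta-1)/\alpha}$ because $\alpha+\beta>1$. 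The $L$ contribution, via stationarity and self-similarity, yields exactly $|t-s|^{4(\alpha+\beta-1)/\alpha}$.

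The delicate point, which I expect to be the main obstacle, is the Lévy moment condition $4(\alpha+\beta-1) < \alpha$, equivalent to $\beta < 1 - 3\alpha/4$. Fortunately it is automatic under the hypothesis $\beta \leq (1-\alpha)/2$: indeed $(1-\alpha)/2 < 1 - 3\alpha/4$ strictly for $\alpha \in (1,2)$, while at $\alpha=2$ the noise is Brownian and all moments of $L$ are finite. This moment threshold is not a mere side condition but a structural constraint that pins down the $\beta$-range for which the scheme closes and explains why a rough-path enhancement of $Z$ is unavoidable precisely in this regime.
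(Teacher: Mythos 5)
Your proposal is correct and follows essentially the same route as the paper's proof: you invoke \cref{lem:parastr-est} for the remainder bound, interpolate the regularities $C_T\calC^{\alpha+\beta-1}$ and $C_T^1\calC^{\beta-1}$ of $\partial_i u$ to obtain the time--space H\"older estimate with exponents $(\alpha+\beta-1)/\alpha$ and $\alpha+\beta-1$, split $|X_t-X_s|$ into $Z$ and $L$ contributions, use Jensen for the $Z$ part (valid since all the relevant powers are $\leqslant 2$) and self-similarity/stationarity for the $L$ part, and identify $4(\alpha+\beta-1)<\alpha$ as the binding moment condition, observing that it holds automatically for $\beta\leqslant(1-\alpha)/2$ when $\alpha<2$ and trivially when $\alpha=2$. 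These are precisely the steps in the paper (which cites the interpolation inequality from [kp-sk, Lemma 3.7, (3.11)] for what you derive by hand), and your closing remark that the moment threshold is what forces the rough-path enhancement in this $\beta$-range aligns with the paper's own remark after the lemma.
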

\begin{remark}
In the pure stable noise case, $\alpha\in (1,2)$, the proof of \cref{lem:nablau-stochcont} does not apply for $\beta\in ((1-\alpha)/2,0)$ (i.e. in the Young case), while for $\alpha=2$, the statement of the lemma is also valid in the Young regime. The reason is the integrability issue for the $\alpha$-stable process. Indeed, in the proof we need that $4(\alpha+\beta)-4<\alpha$, that is $\beta<(4-3\alpha)/4$. 
If $\alpha<2$ and $\beta\leqslant (1-\alpha)/2$, then in particular $\beta<(4-3\alpha)/4$. However, the latter doesn't need to be satisfied in the Young regime, unless $\alpha<4/3$.
\end{remark}
\begin{proof}[Proof of \cref{lem:nablau-stochcont}]
By \cref{lem:parastr-est}, we obtain that
\begin{align}\label{eq:exp}
\MoveEqLeft
\abs{R_{st}}:=\abs{\partial_{i} u(t,X_{t})-\partial_{i} u(s,X_{s})-\nabla u_{s}\cdot [J^{T}(\partial_{i} V)(t,X_{t})-J^{T}(\partial_{i} V)(s,X_{s})]}\nonumber\\&\lesssim\norm{u}_{D_{T}}(1+\norm{\calV}_{\calX^{\beta,\gamma}})[\abs{t-s}^{(2(\alpha+\beta)-2)/\alpha}+\abs{X_{t}-X_{s}}^{2(\alpha+\beta)-2}].
\end{align} 
Using the triangle inequality and $u\in\mathcal{L}_{T}^{0,\alpha+\beta}$ as well as the interpolation estimate \cite[Lemma 3.7, (3.11)]{kp-sk}, we can bound
\begin{align*}
\MoveEqLeft
\E[\abs{\partial_{i} u(t,X_{t})-\partial_{i} u(s,X_{s})}^{2}]\\&\lesssim\norm{\partial_{i} u}_{C_{T}^{(\alpha+\beta-1)/\alpha}L^{\infty}}^{2}\abs{t-s}^{2(\alpha+\beta-1)/\alpha}+\norm{\partial_{i} u}_{C_{T}\calC^{\alpha+\beta-1}}^{2}\E[\abs{X_{t}-X_{s}}^{2(\alpha+\beta-1)}]\\&\lesssim_{T} \norm{u}_{D_{T}(V)}^{2}\abs{t-s}^{2(\alpha+\beta-1)/\alpha},
\end{align*} and the same bound is also valid for $\nabla u$. In the last estimate above, we used that due to the bound \eqref{eq:zhoelder1} on $Z$ in $L^{2}$,
\begin{align*}
\MoveEqLeft
\E[\abs{X_{t}-X_{s}}^{2(\alpha+\beta-1)}]\\&\leqslant\E[\abs{Z_{t}-Z_{s}}^{2(\alpha+\beta-1)}]+\E[\abs{L_{t}-L_{s}}^{2(\alpha+\beta-1)}]\\&\leqslant\E[\abs{Z_{t}-Z_{s}}^{2}]^{(\alpha+\beta-1)}+\E[\abs{L_{t}-L_{s}}^{2(\alpha+\beta-1)}]\\&\leqslant\norm{Z}_{\frac{\alpha+\beta}{\alpha},2}^{2(\alpha+\beta-1)}\abs{t-s}^{2(\alpha+\beta)(\alpha+\beta-1)/\alpha}+\abs{t-s}^{2(\alpha+\beta-1)/\alpha}\E[\abs{L_{1}}^{2(\alpha+\beta-1)}]\\&\lesssim_{T}\abs{t-s}^{2(\alpha+\beta-1)/\alpha},
\end{align*} due to Jensen's inequality with $\alpha+\beta-1\in (0,1)$ and due to $(L_{t}-L_{s})\stackrel{d}{=}(t-s)^{1/\alpha}L_{1}$ for $s\leqslant t$, and $2(\alpha+\beta-1)<\alpha$, such that $\E[\abs{L_{1}}^{2(\alpha+\beta-1)}]<\infty$. 
For the remainder, we employ the bound \eqref{eq:exp} to obtain
\begin{align*}
\E[\abs{R_{st}}^{2}]&\lesssim\norm{u}_{D_{T}}^{2}\paren[\big]{\abs{t-s}^{(4(\alpha+\beta)-4)/\alpha}+\E[\abs{X_{t}-X_{s}}^{4(\alpha+\beta)-4}]}\\&\lesssim_{T}\norm{u}_{D_{T}}^{2}\abs{t-s}^{4\varsigma/\alpha},
\end{align*} using an analogoue argument to estimate $\E[\abs{X_{t}-X_{s}}^{4(\alpha+\beta)-4}]$ as above and $\E[\abs{L_{1}}^{4(\alpha+\beta)-4}]<\infty$, since $4(\alpha+\beta)-4<\alpha$ in the case of $\alpha<2$, as $\beta\leqslant (1-\alpha)/2$. In the case of $\alpha=2$, we have all moments on the Brownian motion $B_{1}=L_{1}$. Thus, we obtain that $(\partial_{i}u (t,X_{t}))_t$ is stochastically controlled by $(J^{T}(\partial_{i}V)(t,X_{t}))_t$ with $\varsigma=\varsigma'=(\alpha+\beta-1)/\alpha$ and $(\partial_{i} u)^{\prime}=\nabla u$.
\end{proof}
\begin{proposition}\label{thm:stabilityint}
Let $\beta\in (\frac{2-2\alpha}{3},\frac{1-\alpha}{2}]$ and $V\in\calX^{\beta,\gamma}$. Let $(X,L,\mathbb{Z}^{V})$ be a weak solution.\\ 
Then for all $0\leqslant t\leqslant T$ and for $(Z,\mathbb{Z}^{V})\in\mathcal{R}_{T}(V)$ given by \cref{def:ws} and for $u\in D_{T}(V)$, the rough stochastic integral
\begin{align*}
\int_{0}^{t}\nabla u(s,X_{s})\cdot d(Z,\mathbb{Z}^{V})_{s}:=\sum_{i=1}^{d}\lim_{\abs{\Pi}\to 0}\sum_{r,l\in\Pi}[\partial_{i} u(r,X_{r})Z^{i}_{rl}+\nabla u (r,X_{r})\cdot\mathbb{Z}^{V,i}_{rl}]\in L^{2}(\p),
\end{align*} where the limit ranges over all partitions $\Pi$ of $[0,t]\subset [0,T]$ with mesh-size $\abs{\Pi}:=\max_{r,s\in\Pi}\abs{r-s}\to 0$,
is well-defined and allows for the bound
\begin{align*}
\norm[\bigg]{\int_{s}^{t}\nabla u(r,X_{r})\cdot d(Z,\mathbb{Z}^{V})_{r}}_{L^{2}(\p)}\lesssim_{T} \abs{t-s}^{(\alpha+\beta)/\alpha}\norm{u}_{D_{T}(V)}\norm{(Z,\mathbb{Z}^{V})}_{\mathcal{R}_{T}(V)}.
\end{align*}
In particular, for the sequence $(V^{n})$ from \cref{def:ws} and $(Z^{n},\mathbb{Z}^{m,n}), (Z,\mathbb{Z}^{m,\infty})\in \mathcal{R}_{T}^{\sigma}(V^{m})$ 
and for a sequence $(u^{m})_{m}\subset D_{T}(V^{m})$ with $\norm{u^{m}-u}_{D_{T}(V)}\to 0$,
it follows, that almost surely 
\begin{align*}
\int_{0}^{t}\nabla u^{m}(r,X_{r})\cdot d(Z^{n},\mathbb{Z}^{m,n})_{r}&=\int_{0}^{t}\nabla u^{m}(r,X_{r})\cdot dZ^{n}_{r},
\\\int_{0}^{t}\nabla u^{m}(r,X_{r})\cdot d(Z,\mathbb{Z}^{m,\infty})_{r}&=\int_{0}^{t}\nabla u^{m}(r,X_{r})\cdot dZ_{r},
\end{align*} where the stochastic integrals on the right-hand side are defined as the $L^{2}(\p)$-limit of the classical Riemann sums. And the following convergence in $L^{2}(\p)$, uniformly in $t\in[0,T]$, holds for $n,m\to \infty$, respectively $m\to\infty$,
\begin{align*}
\int_{0}^{t}\nabla u^{m}(r,X_{r})\cdot d(Z^{n},\mathbb{Z}^{m,n})_{r}&\to \int_{0}^{t}\nabla u(r,X_{r})\cdot d(Z,\mathbb{Z}^{V})_{r},
\\\int_{0}^{t}\nabla u^{m}(r,X_{r})\cdot d(Z,\mathbb{Z}^{m,\infty})_{r}&\to \int_{0}^{t}\nabla u(r,X_{r})\cdot d(Z,\mathbb{Z}^{V})_{r}.
\end{align*} 
\end{proposition}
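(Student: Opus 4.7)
The plan is to apply the rough stochastic sewing \cref{thm:roughint} componentwise in $i=1,\dots,d$, leveraging the two structural lemmas established immediately above. By \cref{lem:z-rough-integrator}, each $(Z^{i},\mathbb{Z}^{V,i})$ lies in $\mathcal{R}_{T}^{(\alpha+\beta)/\alpha}(A^{i})$ with $A^{i}_{t}=J^{T}(\partial_{i} V)(t,X_{t})$; by \cref{lem:nablau-stochcont}, $(\partial_{i} u(t,X_{t}))_{t}$ is stochastically controlled by $A^{i}$ with Gubinelli-type derivative $\nabla u(s,X_{s})$, exponents $\varsigma=\varsigma'=(\alpha+\beta-1)/\alpha$, and controlled norm dominated by $\norm{u}_{D_{T}(V)}$. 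Setting $\sigma=(\alpha+\beta)/\alpha$, the critical hypothesis $\sigma+\varsigma+\varsigma'>1$ of \cref{thm:roughint} reduces to $3(\alpha+\beta)-2>\alpha$, i.e.\ $\beta>(2-2\alpha)/3$, which is precisely the threshold fixing the rough regime. The theorem then produces each component integral together with the asserted Lipschitz bound; summing over $i$ completes the first part of the proposition.

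For the almost sure identification with classical stochastic integrals in the smooth setting, I note that when $V^{m},V^{n}\in C_{T}C^{\infty}_{b}$ the relevant exponents improve substantially: $Z^{n}$ is Lipschitz in time, and (replacing $u^{m}$ by a mollification within $D_{T}(V^{m})$ if needed) $\partial_{i}u^{m}(\cdot,X_{\cdot})$ is stochastically controlled with some $\varsigma>-\beta/\alpha$, which is feasible because $\beta>-1$. Hence the condition $\sigma+\varsigma>1$ holds both for the limit integrator $(Z,\mathbb{Z}^{m,\infty})$ and \emph{a fortiori} for $(Z^{n},\mathbb{Z}^{m,n})$, and the final clause of \cref{thm:roughint} identifies the rough sewing integral with the simple sewing integral $\tilde I(f,Z)_{t}=\lim_{\abs{\Pi}\to 0}\sum_{r,l\in\Pi}f_{r}Z_{rl}\in L^{2}(\p)$, which is exactly the asserted Riemann-sum limit.

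The convergence claims follow by bilinearity and the Lipschitz bound: inserting $\int_{0}^{t}\nabla u\cdot d(Z^{n},\mathbb{Z}^{m,n})$ splits the error into an integrand term controlled by $\norm{u^{m}-u}_{D_{T}(V)}$ against uniformly bounded integrator norms, and an integrator term controlled by $\norm{(Z^{n},\mathbb{Z}^{m,n})-(Z,\mathbb{Z}^{V})}_{\mathcal{R}_{T}(V)}$ against $\norm{u}_{D_{T}(V)}$. The first vanishes by hypothesis on $(u^{m})$, the second by \cref{lem:z-rough-integrator}, and both estimates are uniform in $t\in[0,T]$; the $(Z,\mathbb{Z}^{m,\infty})$ variant is handled identically.

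The only real hurdle sits upstream in \cref{lem:parastr-est,lem:nablau-stochcont}, which exploit the paracontrolled decomposition of $u$ to produce the remainder bound of order $\abs{t-s}^{2(\alpha+\beta-1)/\alpha}$ in $L^{2}$, using the $\alpha$-stable integrability constraint $4(\alpha+\beta)-4<\alpha$ that holds throughout the rough regime when $\alpha<2$ (and unconditionally in the Brownian case $\alpha=2$). Given those inputs, \cref{thm:stabilityint} is a direct application of \cref{thm:roughint}. The conceptual point is that the enhancement $\mathbb{Z}^{V}$ has been engineered so that the first-order correction $\nabla u(s,X_{s})\cdot\mathbb{Z}^{V,i}_{st}$ in the Davie expansion exactly cancels the worst term in $\partial_{i}u(t,X_{t})-\partial_{i}u(s,X_{s})$, allowing the sewing hypotheses to be verified at the critical exponent $\beta>(2-2\alpha)/3$.
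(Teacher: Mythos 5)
Your main argument — apply \cref{thm:roughint} componentwise, feed in \cref{lem:z-rough-integrator} for the integrator $(Z^{i},\mathbb{Z}^{V,i})$ and \cref{lem:nablau-stochcont} for the controlled integrand, verify $\sigma+\varsigma+\varsigma'=(3(\alpha+\beta)-2)/\alpha>1\iff\beta>(2-2\alpha)/3$, and conclude the existence, the Lipschitz estimate, and (by bilinearity of the Lipschitz bound) the convergences — is exactly the paper's route, which simply cites those three results.

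The one place where your write-up has a gap is the almost-sure identification with the classical Riemann-sum integral. You invoke the final clause of \cref{thm:roughint} after claiming the exponent $\varsigma$ can be pushed above $-\beta/\alpha$ in the smooth case. Two problems. First, mollifying $u^{m}$ is irrelevant: the exponent $\varsigma$ entering the condition $\sigma+\varsigma>1$ is pinned down by the integrator data $A^{m,i}=J^{T}(\partial_{i}V^{m})(\cdot,X_{\cdot})$ and the lift $\mathbb{Z}^{m,\infty}$ through the requirement $\norm{\mathbb{Z}^{m,\infty}\mid\F}_{\sigma+\varsigma,\infty}<\infty$, and those depend only on $V^{m}$ and $Z$, not on $u^{m}$. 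Second, and more importantly, the only $L^{\infty}$-conditional bound on $\mathbb{Z}^{m,\infty}$ that \cref{def:ws} (condition 2.) hands you is at exponent $(2\alpha+2\beta-1)/\alpha\leqslant 1$ in the rough regime, so you cannot assert $\sigma+\varsigma>1$ with the required $L^\infty$ bound. What does improve for smooth $V^{m}$ is the \emph{$L^{2}$}-conditional bound: \cref{lem:generalws} applied to the smooth $f^{m}=J^{T}(\partial_{i}V^{m,j})$ with $\theta$ close to $1$ gives $\norm{\E_{s}[\mathbb{Z}^{m,\infty,i}_{st}]}_{L^{2}}\lesssim\abs{t-s}^{(\theta+\alpha+\beta)/\alpha}$ at an exponent strictly above $1$, and this is enough to run the uniqueness-of-sewing argument in the last clause of \cref{thm:roughint} after replacing the $L^{\infty}$-estimate of $\E_{s}[f'_{s}\cdot\mathbb{Z}^{A}_{st}]$ by the corresponding $L^{2}$-estimate. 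That the paper's own proof is equally terse here does not make this step superfluous — it is precisely the point where \cref{lem:generalws} (rather than the rough integrator bounds) does the work.
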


\begin{proof}
Recall that $u^{m}$ is paracontrolled by $J^{T}(V^{m})$ (i.e.~$u^{m}\in D_{T}(V^{m})$), 
\begin{align*}
\mathbb{Z}^{m,n,i}_{st}&=\int_{s}^{t}[J^{T}(\partial_{i}V^{m})(r,X_{r})-J^{T}(\partial_{i}V^{m})(s,X_{s})]dZ^{n,i}_{r}\quad\text{ and }\\ \mathbb{Z}^{m,\infty,i}_{st}&=\int_{s}^{t}[J^{T}(\partial_{i}V^{m})(r,X_{r})-J^{T}(\partial_{i}V^{m})(s,X_{s})]dZ^{i}_{r}.
\end{align*} Then the proof follows from \cref{thm:roughint}, \cref{lem:nablau-stochcont} and \cref{lem:z-rough-integrator}.
\end{proof}

\noindent \cref{thm:stabilityint} extendes the stochastic integral to the stable rough stochastic integral, which finally enables to prove the following theorem.

\begin{theorem}\label{thm:wsimp}
Let $\calV\in\calX^{\beta,\gamma}$ and $\beta\in((2-2\alpha)/3,0)$. Let $(X,L,\mathbb{Z}^{V})$ be a weak solution, starting at $x\in\R^{d}$. Then $X$ solves the martingale problem for the generator $\mathcal{G}^{\calV}$, starting at $x\in\R^{d}$.
\end{theorem}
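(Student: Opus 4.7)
To verify the martingale problem, fix $f\in C_T\calC^\epsilon$ with $\epsilon>2-\alpha$ and $u^T\in\calC^3$, and let $u$ solve $\mathcal{G}^{\calV}u=f$, $u(T,\cdot)=u^T$. I will use the approximating sequence $(V^n)$ supplied by \cref{def:ws} and denote by $u^n\in C^{1,2}_b([0,T]\times\R^d)$ the classical solution of $\mathcal{G}^{V^n}u^n=f$, $u^n(T,\cdot)=u^T$. By \cref{thm:singular} applied to these regular data and by local Lipschitz continuity of the solution map in $\calV\in\calX^{\beta,\gamma}$, $u^n\to u$ in $\mathcal{L}_T^{0,\alpha+\beta}$ and the paracontrolled remainders $(u^n)^\sharp=u^n-\nabla u^n\para J^T(V^n)$ converge to $u^\sharp$ in $\mathcal{L}_T^{0,2\alpha+2\beta-1}$, yielding $\norm{u^n-u}_{D_T(V)}\to 0$.

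\textbf{Itô formula and algebraic rearrangement.} Since $X=x+Z+L$ is a Dirichlet process (\cref{rem:Dirichlet}), applying the Itô/Dynkin formula to $u^n(t,X_t)$ yields
\begin{align*}
u^n(t,X_t)-u^n(0,x)-\int_0^t[\partial_s-\La]u^n(s,X_s)\,ds=\int_0^t\nabla u^n(s,X_s)\cdot dZ_s+\tilde M^n_t,
\end{align*}
where $\tilde M^n_\cdot$ is the compensated Poisson integral $\int_0^\cdot\int[u^n(s,X_{s-}+y)-u^n(s,X_{s-})]\hat\pi(ds,dy)$ for $\alpha\in(1,2)$, respectively $\int_0^\cdot\nabla u^n(s,X_s)\cdot dB_s$ for $\alpha=2$. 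Substituting $[\partial_s-\La]u^n=f-V^n\cdot\nabla u^n$ from the PDE and using that $dZ^n_s=V^n(s,X_s)\,ds$ in the Stieltjes sense, I rearrange to
\begin{align*}
M^n_t:=u^n(t,X_t)-u^n(0,x)-\int_0^t f(s,X_s)\,ds=\int_0^t\nabla u^n(s,X_s)\cdot d(Z-Z^n)_s+\tilde M^n_t.
\end{align*}

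\textbf{Limit and cancellation.} Passing to $n\to\infty$, the left-hand side converges in $L^2(\p)$ to $M_t$ by uniform convergence $u^n\to u$. The crucial observation, furnished by \cref{thm:stabilityint}, is that both classical integrals can be identified with rough stochastic integrals, namely $\int_0^t\nabla u^n\cdot dZ_s=\int_0^t\nabla u^n\cdot d(Z,\mathbb{Z}^{n,\infty})_s$ and $\int_0^t\nabla u^n\cdot dZ^n_s=\int_0^t\nabla u^n\cdot d(Z^n,\mathbb{Z}^{n,n})_s$, and both converge in $L^2(\p)$ uniformly in $t$ to the same limit $\int_0^t\nabla u\cdot d(Z,\mathbb{Z}^V)_s$; therefore their difference vanishes. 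Meanwhile $\tilde M^n\to\tilde M$ in $L^2(\p)$ by boundedness and convergence of $u^n$ and the Burkholder--Davis--Gundy inequality, and $L^2$-limits of $(\F_t)$-martingales remain $(\F_t)$-martingales, so $\tilde M$ is a martingale. Hence $M_t=\tilde M_t$ is an $(\F_t)$-martingale under $\p$; restricting to the canonical filtration on Skorokhod space shows that $\p^X$ solves the martingale problem for $(\mathcal{G}^{\calV},\delta_x)$. The main obstacle is precisely this identification: in the rough regime $\beta\leq(1-\alpha)/2$, $\nabla u$ has time regularity only $(\alpha+\beta-1)/\alpha$, so that the classical sewing exponent $(2\alpha+2\beta-1)/\alpha\leq 1$ is critical and the integrals against $Z$ and $Z^n$ cannot individually be controlled in a stable way; only the enhanced lift $\mathbb{Z}^V$ of \cref{def:ws} reconciles both sequences and forces the desired cancellation.
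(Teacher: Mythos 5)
Your proof follows exactly the paper's argument: Itô's formula for the Dirichlet process $X$ applied to the regular approximations $u^n$, substitution of the PDE $\mathcal{G}^{V^n}u^n=f$ to isolate the remainder $\int_0^t\nabla u^n\cdot d(Z-Z^n)_s$, identification of the classical integrals with rough stochastic integrals via \cref{thm:stabilityint}, and finally the $L^2$-convergence of the compensated Poisson martingales $\tilde M^n\to\tilde M$. The rearrangement, the invocation of $\norm{u^n-u}_{D_T(V)}\to 0$, and the cancellation argument are all correct and match the paper.

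There is, however, a small but genuine gap: your argument invokes \cref{thm:stabilityint} to establish the cancellation, but that proposition is stated only for the rough regime $\beta\in\bigl((2-2\alpha)/3,(1-\alpha)/2\bigr]$. The underlying obstruction is in \cref{lem:nablau-stochcont}, which requires $4(\alpha+\beta)-4<\alpha$ to control the moments $\E[\abs{L_1}^{4(\alpha+\beta)-4}]$; this fails for $\beta$ in the Young range $((1-\alpha)/2,0)$ once $\alpha\geq 4/3$. Since \cref{thm:wsimp} covers the full range $\beta\in((2-2\alpha)/3,0)$, the Young case needs a separate (and in fact easier) argument: there one uses \cref{lem:generalws} directly — the stability of the plain stochastic sewing integral $\int_0^t\partial_i u(s,X_s)\,dZ_s$ with germ $\partial_i u(s,X_s)Z^i_{st}$ — without any enhancement $\mathbb{Z}^V$. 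With this case split added, your proof is complete.
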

\begin{proof}
Let $f\in\CTcalC^{\epsilon}$, $\epsilon>2-\alpha$, $u^{T}\in\calC^{3}$ and $u$ be the solution of $\calG^{\calV}u=f$, $u(T,\cdot)=u^{T}$. Let $(X,L,\mathbb{Z}^{V})$ be a weak solution starting at $x$ on the stochastic basis $(\Omega, \F, (\F_{t}), \p)$. 
Then the goal is to show that the process $(M_{t})_{t\in[0,T]}$ with
\begin{align}\label{eq:aim}
M_{t}=u(t,X_{t})-u(0,x)-\int_{0}^{t}f(s,X_{s})ds
\end{align} is a martingale with respect to $(\F^{X}_{t})\subset(\F_{t})$ under $\p$, where $\mathcal{F}^{X}_{t}:=\sigma(X_{s}\mid s\leqslant t)$ is the canonical filtration. Because $X$ is a weak solution, there exists a sequence $(V^{n})\subset C_{T}(C^{\infty}_{b})^{d}$ satisfying $(V^{n},\mathcal{K}(V^{m},V^{n}))\to\calV$ in $\calX^{\beta,\gamma}$ and such that the convergences from $1.),2.)$ from \cref{def:ws} hold. Consider the solution $u^{n}$ of $\mathcal{G}^{V^{n}}u^{n}=f$, $u^{n}(T,\cdot)=u^{T}$, which converges to $u$ in $D_{T}(V)$ by the continuity of the solution map from \cref{thm:singular}. Then as $u^{n}\in C^{1,2}_{b}([0,T]\times\R^{d})$ and $X$ is a Dirichlet process,
we can apply the Itô formula from \cite[Theorem 3.1]{cjml} to $u^{n}(t,X_{t})$ (see also \cref{rem:Dirichlet}), such that for $n\in\N$, 
\begin{align}
\MoveEqLeft
u^{n}(t,X_{t})-u^{n}(0,x)\nonumber\\&=\int_{0}^{t}(\partial_{t}-\La)u^{n}(s,X_{s})ds+\int_{0}^{t}\nabla u^{n}(s,X_{s})\cdot dZ_{s}+M^{n}_{t}\nonumber\\&=\int_{0}^{t}f(s,X_{s})ds+M_{t}^{n}\nonumber\\&\quad +\paren[\bigg]{\int_{0}^{t}\nabla u^{n}(s,X_{s})\cdot dZ_{s}-\int_{0}^{t}\nabla u^{n}(s,X_{s})\cdot V^{n}(s,X_{s})ds}\label{eq:remainder},
\end{align} where we furthermore used the equation for $u^{n}$ and abbreviate the martingale, in the case $\alpha\in (1,2)$, by
\begin{align}\label{eq:m-expression}
M_{t}^{n}:=\int_{0}^{t}\int_{\R^{d}\setminus\{0\}} (u^{n}(s,X_{s-}+y)-u^{n}(s,X_{s-}))\hat{\pi}(ds,dy).
\end{align} 
In the Brownian noise case, $\alpha=2$, we have that $M_{t}^{n}=\int_{0}^{t}\nabla u^{n}(s,X_{s})\cdot dB_{s}$.
It follows that $M^{n}$ is a $(\F_{t})$-martingale   
(cf.~the argument in the proof of \cref{thm:mainthm1}) and it is $(\F_{t}^{X})$-adapted by \eqref{eq:aim}. Thus $M^{n}$ is a $(\F^{X}_{t})$-martingale.
We claim that $(M^{n})_n$ converges in $L^{2}(\p)$, uniformly in $t\in [0,T]$, to a martingale $\tilde{M}$ given by the expression \eqref{eq:m-expression} for $u^{n}$ replaced by $u$. Indeed, this uses the Burkholder-Davis-Gundy-type inequality from \cite[Lemma 8.21]{peszat_zabczyk_2007} and an analogoue argument as in the proof of \cite[Lemma 4.4]{kp}.\\ 
Since $u^{n}\to u$ in $C_{T}L^{\infty}$, we thus obtain that the process $M$ in \eqref{eq:aim} is a martingale with $M=\tilde{M}$, provided that the remainder in \eqref{eq:remainder} vanishes in $L^{2}(\p)$. 
That is, for $Z^{n}=\int_{0}^{\cdot}V^{n}(s,X_{s})ds$, we claim that when $n\to\infty$, 
\begin{align}\label{eq:goal}
\sup_{t\in[0,T]}\norm[\bigg]{\int_{0}^{t}\nabla u^{n}(s,X_{s-})\cdot dZ^{n}_{t}-\int_{0}^{t}\nabla u^{n}(s,X_{s-})\cdot dZ_{s}}_{L^{2}(\p)}\to 0.
\end{align} 
The stochastic integrals are given by the limit of the classical Riemann sums in $L^{2}(\p)$.\\ If $\beta>(1-\alpha)/2$ (i.e.~in the Young case), the convergence follows from the stability of the stochastic sewing integral from \cref{lem:generalws} applied for $f=\partial_{i}u$ and using only that 
$\norm{Z^{n}-Z}_{\frac{\alpha+\beta}{\alpha},2}\to 0$, $\norm{Z^{n}-Z\mid\F}_{\frac{\alpha+\beta}{\alpha},\infty}\to 0$ and $u^{n}\to u$ in $D_{T}(V)$. Thus, in the Young case, the remainder vanishes.\\
If $\beta\in ((2-2\alpha)/3,(1-\alpha)/2]$, then an application of \cref{thm:stabilityint} yields  \eqref{eq:goal}.
\end{proof}

\noindent In what follows, we prove the reverse implication: a martingale solution is a (rough) weak solution in the sense of \cref{def:ws}. The first step is to prove that a martingale solution is of class $\mathcal{K}^{\vartheta}$ for $\vartheta=(2(\alpha+\beta)-1)/\alpha$, cf.~\cref{def:class}. With that, the bound on $\mathbb{Z}^{V}$ in \eqref{eq:ahoelder1} follows after identifying $Z$ and $\mathbb{A}$ with the respective solutions $u^{t},v^{t}$ of the backward PDEs in \eqref{eq:u}, \eqref{eq:v}. The latter will be established in the following proposition.

\begin{proposition}\label{thm:class}
Let $V\in\mathcal{X}^{\beta,\gamma}$ for $\beta\in((2-2\alpha)/3,0)$. Let $X$ be the solution of the martingale problem for the generator $\mathcal{G}^{\calV}$, starting at $x\in\R^{d}$. Then $X$ is of class $\mathcal{K}^{\vartheta}(\cal V)$ (cf.~\cref{def:class}) for $\vartheta=(2(\alpha+\beta)-1)/\alpha$. 
\end{proposition}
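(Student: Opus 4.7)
The plan is to reduce the class $\mathcal{K}^{\vartheta}$ bound to a purely deterministic PDE estimate for
\begin{align*}
w^{t,i,j}(s,y) := J^{T}(\partial_{i} V^{j})(s,y)\,u^{t,i}(s,y) - v^{t,i,j}(s,y),
\end{align*}
and then to establish the latter using the paracontrolled structure supplied by \cref{thm:singular}. Since $u^{t,i}(t,\cdot)\equiv 0$ and $v^{t,i,j}(t,\cdot)\equiv 0$ by the terminal conditions in \eqref{eq:u}--\eqref{eq:v}, the bracket inside $\E_{s}$ in \cref{def:class} collapses to the $\F_{s}$-measurable quantity $w^{t,i,j}(s,X_{s})$, so $\E_{s}$ acts trivially. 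Consequently the $L^{\infty}(\p)$-bound reduces to
\begin{align*}
\norm{w^{t,i,j}(s,\cdot)}_{L^{\infty}(\R^{d})} \lesssim (t-s)^{(2\alpha+2\beta-1)/\alpha},
\end{align*}
uniformly in $t\in[0,T]$, together with $w^{t,i,j}(t,\cdot)=0$.

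To obtain this deterministic bound I would derive the Kolmogorov backward PDE satisfied by $w^{t,i,j}$. A formal Leibniz computation---justified by smooth approximation $V^{n}\to V$ in $\mathcal{X}^{\beta,\gamma}$ together with the continuity of the PDE solution map from \cref{thm:singular}---gives
\begin{align*}
\mathcal{G}^{\calV}(J^{T}(\partial_{i} V^{j})u^{t,i}) = (\mathcal{G}^{\calV}J^{T}(\partial_{i} V^{j}))u^{t,i} + J^{T}(\partial_{i} V^{j})(\mathcal{G}^{\calV}u^{t,i}) + \Gamma(J^{T}(\partial_{i} V^{j}),u^{t,i}),
\end{align*}
where $\Gamma$ is the carré du champ of $-\La$ (the bilinear form $\nabla g\cdot\nabla u$ when $\alpha=2$, an integral jump commutator when $\alpha\in(1,2)$). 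Using $(\partial_{s}-\La)J^{T}(\partial_{i} V^{j})=-\partial_{i} V^{j}$, $\mathcal{G}^{\calV}u^{t,i}=V^{i}$, and subtracting $\mathcal{G}^{\calV}v^{t,i,j}=J^{T}(\partial_{i} V^{j})V^{i}$, the singular principal term cancels and one obtains
\begin{align*}
\mathcal{G}^{\calV} w^{t,i,j} = -\partial_{i} V^{j}\,u^{t,i} + V\cdot\nabla J^{T}(\partial_{i} V^{j})\,u^{t,i} + \Gamma(J^{T}(\partial_{i} V^{j}),u^{t,i}), \quad w^{t,i,j}(t,\cdot)=0.
\end{align*}

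Each summand on the right-hand side is a distributional product, which I interpret via Bony decomposition combined with the paracontrolled representation $u^{t,i}=u^{t,i,\sharp}+\nabla u^{t,i}\para J^{T}(V)$ from \cref{thm:singular}. The only truly problematic resonant contributions---typically of the form $\partial_{i} V^{j}\reso J^{T}(V^{k})$, arising after commuting paraproducts past resonant products---are rewritten via a paracontrolled commutator lemma into components of the enhancement $\calV_{2}$, and are therefore controlled by $\norm{\calV}_{\calX^{\beta,\gamma}}$. After the bookkeeping, the full right-hand side admits a paracontrolled decomposition $F=F^{\sharp}+F'\para V$ with $F^{\sharp}\in\mathcal{L}_{T}^{0,\alpha+2\beta-1}$ and $F'\in\mathcal{L}_{T}^{0,\alpha+\beta-1}$, both bounded uniformly in $t$. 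Applying \cref{thm:singular} in the regular-terminal-condition case then yields $w^{t,i,j}\in\mathcal{L}_{T}^{0,2\alpha+2\beta-1}$ with uniform bound. Interpolating $C_{T}\calC^{2\alpha+2\beta-1}\cap C_{T}^{1}\calC^{\alpha+2\beta-1}$---valid because $(2\alpha+2\beta-1)/\alpha\in(0,1]$ in the rough regime $\beta\leqslant (1-\alpha)/2$---and exploiting $w^{t,i,j}(t,\cdot)=0$ then delivers the desired Hölder-in-time bound.

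The principal technical obstacle is the paracontrolled bookkeeping in the third step: one must identify the singular pieces of every Bony product in $F$ and reorganise them so that precisely the resonances hidden in $\calV_{2}$ absorb them, with the remaining pieces shown to inhabit the paracontrolled solution space required by \cref{thm:singular}. In the pure stable case $\alpha\in(1,2)$ the non-local carré du champ $\Gamma$ additionally demands a jump-integral commutator estimate beyond the classical $\nabla g\cdot\nabla u$ structure, but this is a technical variant rather than a conceptual difficulty.
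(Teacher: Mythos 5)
Your reduction at the start is correct and is a clean observation: since $u^{t,i}(t,\cdot)=0$ and $v^{t,i,j}(t,\cdot)=0$, the bracket in \cref{def:class} equals the $\F_s$-measurable quantity $w^{t,i,j}(s,X_s)$, so $\E_s$ is redundant and the claim reduces to the deterministic bound $\norm{w^{t,i,j}(s,\cdot)}_{L^\infty}\lesssim (t-s)^{(2\alpha+2\beta-1)/\alpha}$. The paper arrives at the same deterministic quantity, but via the martingale and Markov properties (which are anyway needed to identify $u^{t,i}(s,\cdot)=-\int_s^t T_{s,r}V^i_r\,dr$), and then estimates the representation $w^{t,i,j}(s,\cdot)=\int_s^t\bigl[T_{s,r}(J^T(\partial_i V^j)_r V^i_r)-J^T(\partial_i V^j)_s\,T_{s,r}V^i_r\bigr]dr$ integrand-by-integrand using the singular-terminal-condition PDE theory of \cite{kp-sk}.

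The step that does not go through is the Leibniz route. The right-hand side you obtain,
\begin{align*}
\mathcal{G}^{\calV}w^{t,i,j} = -\partial_i V^j\,u^{t,i} + \bigl(V\cdot\nabla J^T(\partial_i V^j)\bigr)u^{t,i} + \Gamma\bigl(J^T(\partial_i V^j),u^{t,i}\bigr),
\end{align*}
contains the factor $V\cdot\nabla J^T(\partial_i V^j)$, whose resonant piece $V^k\reso\partial_k J^T(\partial_i V^j)=V^k\reso\partial_k\partial_i J^T(V^j)$ has Besov regularities $\beta$ and $\alpha+\beta-2$, summing to $\alpha+2\beta-2<0$ throughout the range $\beta\in((2-2\alpha)/3,0)$. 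This is one full derivative \emph{more} singular than the enhancement $\calV_2$, which encodes only first-gradient resonances of the type $\partial_k J^T(V^j)\reso V^k$. Integration by parts does not repair this: $\partial_i\bigl(V^k\reso\partial_k J^T(V^j)\bigr)-V^k\reso\partial_i\partial_k J^T(V^j)=(\partial_i V^k)\reso\partial_k J^T(V^j)$, which again has regularity sum $\alpha+2\beta-2<0$. No reassociation of the triple product helps either, since $V^k u^{t,i}\in\calC^\beta$ and $\partial_k J^T(\partial_i V^j)u^{t,i}\in\calC^{\alpha+\beta-2}$, and both pairings with the remaining factor stay at the same negative exponent. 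The same obstruction appears in the $\Gamma$-term, e.g.\ for $\alpha=2$, $\nabla J^T(\partial_i V^j)\cdot\nabla u^{t,i}$ produces the resonance $\nabla^2 J^T(V)\reso\nabla J^T(V)$ after inserting the paracontrolled expansion of $\nabla u^{t,i}$, and this again sums to $2\alpha+2\beta-3\le 0$ for $\beta\le (1-\alpha)/2$. Your remark that the ``only truly problematic'' resonance is $\partial_i V^j\reso J^T(V^k)$ therefore under-counts: that resonance has sum $\alpha+2\beta-1$ and \emph{can} be reduced to $\calV_2$ by one integration by parts, but $V\reso\nabla J^T(\partial_i V^j)$ cannot. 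Consequently the single PDE for $w^{t,i,j}$ cannot be posed (even paracontrolled-ly) with only the data $V\in\calX^{\beta,\gamma}$ assumed in the proposition; it would require a second-order stochastic lift, which is precisely what the theorem must avoid postulating.

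The paper's actual proof circumvents this by never forming the triple product. After passing to the Markov semigroup, the relevant objects are $y^{r}_s=T_{s,r}V^i_r$ and $w^{r}_s=T_{s,r}(J^T(\partial_i V^j)_r V^i_r)$, solutions of $\mathcal{G}^\calV(\cdot)=0$ with \emph{singular paracontrolled terminal data} at time $r$; the only resonance appearing in those terminal data is $J^T(\partial_i V^j)_r\reso V^i_r$, which is a component of $\calV_2$, so \cite{kp-sk} gives the paracontrolled structure of $y^r,w^r$ with blow-up rate $(r-s)^{(\alpha+2\beta-1)/\alpha}$ near $s=r$. The $(t-s)^{(2\alpha+2\beta-1)/\alpha}$-bound then comes from integrating this blow-up over $r\in[s,t]$; it is not a Schauder gain from solving one equation on $[0,t]$. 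So your plan and the paper's differ structurally, and the gap in yours is the missing second-gradient enhancement, not merely bookkeeping.
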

\begin{remark}
For a martingale solution $X$, we prove below that 
\begin{align}\label{eq:repr}
\mathbb{Z}^{V,i}_{st}&:=\int_{s}^{t}[J^{T}(\partial_{i} V)(r,X_{r})-J^{T}(\partial_{i} V)(s,X_{s})]dZ^{i}_{s}\nonumber\\&=v^{t,i}(t,X_{t})-v^{t,i}(s,X_{s})-J^{T}(\partial_{i} V)(s,X_{s})(u^{t,i}(t,X_{t})-u^{t,i}(s,X_{s}))+M^{i}_{st}
\end{align} for martingale differences $M^{i}_{st}$ and the solutions $v^{t,i},u^{t,i}$ from \eqref{eq:v}, \eqref{eq:u}. Then, \cref{thm:class} shows $\norm{\mathbb{Z}^{V}\mid\F}_{\vartheta,\infty}<\infty$ for $\vartheta=(2(\alpha+\beta)-1)/\alpha$.
It remains open, if one can prove the bound $\norm{\mathbb{Z}^{V}}_{\vartheta,2}<\infty$, which is a stronger bound than $\norm{\mathbb{Z}^{V}}_{(\alpha+\beta)/\alpha,2}<\infty$ (the latter can be inferred from the time regularity of $v^{t},u^{t}$). This is also the reason why we employ the \underline{stochastic} sewing lemma instead of the classical sewing lemma in $L^{2}(\p)$.\\
It is not straightforward to adjust the proof of \cref{thm:class} to show $\norm{\mathbb{Z}^{V}}_{\vartheta,2}<\infty$. Indeed, the proof exploits the conditional expectation in two ways. On the one hand, it removes the martingale differences, for which the estimates would be more involved. On the other hand, the conditional expectation enables to use the Markov property of $X$ and thus to transform the claim into a question on Schauder and commutator estimates for the semigroup $(T_{s,r})_{s\leqslant r}$ of $X$. In the proof below, we infer those estimates on $(T_{s,r})_{s\leqslant r}$ from regularity properties of the solutions of the generator PDE with \underline{singular} terminal conditons from \cref{thm:singular}.
\end{remark}
\begin{proof}[Proof of \cref{thm:class}]
Let $0\leqslant s<t\leqslant T$. 
By \cref{thm:mainthm} the martingale solution $X$ for $\mathcal{G}^{\calV}$ is a strong Markov process.
Replacing $v^{t,i}$ by the solution $v^{n,i}$ of $\mathcal{G}^{(V,\calV_{2})}v^{n,i}=J^{T}(\partial_{i} V)\cdot V^{n,i}$, $v^{n,i}(t,\cdot)=0$ and $u^{t,i}$ by $u^{n,i}$ with $\mathcal{G}^{(V,\calV_{2})}u^{n,i}=V^{n,i}$, $u^{n,i}(t,\cdot)=0$ and rewriting the conditional expectation in \eqref{eq:class} with the semigroup $(T_{r,l})_{0\leqslant r\leqslant l\leqslant T}$ of the (time-inhomogeneous) Markov process $X$, we obtain
\begin{align}
\hspace{1em}&\hspace{-1em}
\E_{s}[v^{t,i}(t,X_{t})-v^{t,i}(s,X_{s})-J^{T}(\partial_{i} V)(s,X_{s})(u^{t,i}(t,X_{t})-u^{t,i}(s,X_{s}))]\nonumber\\&=\squeeze[1]{\lim_{n\to\infty}\E_{s}[v^{n,i}(t,X_{t})-v^{n,i}(s,X_{s})-J^{T}(\partial_{i} V)(s,X_{s})(u^{n,i}(t,X_{t})-u^{n,i}(s,X_{s}))]}\label{eq:l1}\\&=\lim_{n\to\infty}\E_{s}\bigg[\int_{s}^{t}(J^{T}(\partial_{i} V)(r,X_{r})-J^{T}(\partial_{i} V)(s,X_{s}))\cdot V^{n,i}(r,X_{r})dr\bigg]\label{eq:mp}\\&=\lim_{n\to\infty}\int_{s}^{t}T_{s,r}(J^{T}(\partial_{i} V)\cdot V^{n,i})_{r}(X_{s})-J^{T}(\partial_{i} V)(s,X_{s})T_{s,r}(V^{n,i}_{r})(X_{s})dr\nonumber
\allowdisplaybreaks
\\&=\lim_{n\to\infty}\int_{s}^{t}T_{s,r}(J^{T}(\partial_{i} V)\cdot V^{n,i})_{r}(X_{s})-J^{T}(\partial_{i} V)(r,X_{s})T_{s,r}(V^{n,i}_{r})(X_{s})dr\nonumber\\&\qquad +\lim_{n\to\infty}\int_{s}^{t}(J^{T}(\partial_{i} V)(s,X_{s})-J^{T}(\partial_{i} V)(r,X_{s}))T_{s,r}V^{n,i}_{r} (X_{s})dr\nonumber
\\&=\int_{s}^{t}w^{r,i}_{s}(X_{s})-J^{T}(\partial_{i} V)(r,X_{s})y^{r,i}_{s}(X_{s})dr\label{eq:t1}\\&\qquad +\int_{s}^{t}(J^{T}(\partial_{i} V)(s,X_{s})-J^{T}(\partial_{i} V)(r,X_{s}))y^{r,i}_{s} (X_{s})dr\label{eq:t2}.
\end{align} 
The convergence above is in $L^{\infty}(\p)$, uniformly in $s,t$. The convergence in \eqref{eq:l1} follows from $\sup_{t\in[0,T]}\norm{v^{n,i}- v^{t,i}}_{C_{t}L^{\infty}_{\R^{d}}}\to 0$ and $\sup_{t\in[0,T]}\norm{u^{n,i}- u^{t,i}}_{C_{t}L^{\infty}}\to 0$ by \cite[Corollary 4.14]{kp-sk} (taking $f^{t,n}=V^{n,i}_{\mid [0,t]}$ and $y^{n,t}=u^{n,i}(t,\cdot)=0$).
For the equality \eqref{eq:mp}, we used that $X$ solves the martingale problem for $\mathcal{G}^{\calV}$, such that 
\begin{align*}
u^{n,i}(t,X_{t})-u^{n,i}(s,X_{s})=\int_{s}^{t}V^{n,i}(r,X_{r})dr+M_{r,t}^{n,i},
\end{align*} for martingale differences $M^{n,i}_{r,t}$, that vanishes after taking the conditional expectation, analogously for $v^{n,i}$. Moreover, $y^{r}, w^{r}$ are defined as follows. Let
\begin{align}\label{eq:equations-yw}
T_{s,r}V_{r}^{n,i}=:y^{r,n,i}_{s},\quad T_{s,r}(J^{T}(\partial_{i}V)\cdot V_{r}^{n,i})=:w^{r,n,i}_{s},\quad s\in[0,r].
\end{align}  
Then we have that $y^{r,n,i}$ solves $\mathcal{G}^{\calV}y^{r,n,i}=0$ with singular terminal condition $y^{n,r}_{r}=V_{r}^{n,i}$ at time $r$ and $w^{r,n,i}=(w^{r,n,i,j})_{j=1,\dots,d}$ solves  $\mathcal{G}^{\calV}w^{n,r,i,j}=0$ with $w^{r,n,i,j}_{r}=J^{T}(\partial_{i}V^{j})_{r}\cdot V_{r}^{n,i}$. By the continuity of the PDE solution map (cf.~\cref{thm:singular}), we conclude that 
\begin{align*}
&(y^{r,n,i},y^{r,n,i,\sharp})\to (y^{r,i},y^{r,i,\sharp}),\\& (w^{r,n,i,j},w^{r,n,i,j,\sharp})\to (w^{r,i,j},w^{r,i,j,\sharp})\in\mathcal{L}_{r}^{\gamma,\alpha+\beta}\times\mathcal{L}_{r}^{\gamma,2(\alpha+\beta)-1},
\end{align*} for $n\to\infty$, where $y^{r,i},w^{r,i}$ solve $\mathcal{G}^{\calV}y^{r,i}=0$, $\mathcal{G}^{\calV}w^{r,i,j}=0$ with terminal conditions $y^{r}_{r}=V^{i}_{r}$, respectively $w^{r,i,j}_{r}=J^{T}(\partial_{i}V^{j})_{r}\cdot V^{i}_{r}$, and $\gamma\in (0,1)$. The convergence is uniform in $r\in[0,T]$ by \cite[Corollary 4.14]{kp-sk}. In particular, it follows that $y^{r,n,i}\to y^{r,i}$ and $w^{r,n,i,j}\to w^{r,i,j}$ in $\mathcal{M}_{r}^{\gamma}L^{\infty}$, uniformly in $r\in[0,T]$, which implies the convergence of the integrals \eqref{eq:t1} and \eqref{eq:t2} in $L^{\infty}(\p)$, uniformly in $0\leqslant s<t\leqslant T$.\\
Our goal is now to estimate both integrals  \eqref{eq:t1} and \eqref{eq:t2} in $L^{\infty}(\p)$ by $\abs{t-s}^{\vartheta}$.\\
To estimate the integral \eqref{eq:t1}, we apply again \cite[Corollary 4.14]{kp-sk} for the solutions $(y^{r,i})$, $(w^{r,i,j})$.
The uniform bound from the corollary together with the interpolation bound (3.13) from \cite[Lemma 3.7]{kp-sk} for $\tilde{\theta}=\alpha+\beta\gamma^{-1}$ (note $\beta(1-\gamma^{-1})>0$) , $\theta=\alpha+\beta$ and the embedding $\calC^{(1-\gamma')\alpha}\hookrightarrow L^{\infty}$ yield that 
\begin{align*}
\sup_{r\in [0,T]}\norm{y^{r,i}}_{\mathcal{M}_{r}^{-\beta/\alpha}L^{\infty}}\lesssim
\sup_{r\in [0,T]}\norm{y^{r,i}}_{\mathcal{M}_{r}^{-\beta/\alpha}\calC^{\beta(1-\gamma^{-1})}}\lesssim
\sup_{r\in [0,T]}\norm{y^{r,i}}_{\mathcal{L}_{r}^{\gamma,\alpha+\beta}}\lesssim_{T,\calV} 1.
\end{align*}
Thus, we obtain  
\begin{align*}
\MoveEqLeft
\norm[\bigg]{\int_{s}^{t}(J^{T}(\partial_{i} V)_{s}-J^{T}(\partial_{i} V)_{r})y^{r,i}_{s}dr}_{L^{\infty}}\\&\lesssim
\sup_{r\in [0,T]}\norm{y^{r,i}}_{\mathcal{M}_{r}^{-\beta/\alpha}L^{\infty}}\norm{J^{T}(\partial_{i} V)}_{C_{T}^{(\alpha+\beta-1)/\alpha}L^{\infty}_{\R^{d}}}\int_{s}^{t}\abs{r-s}^{(\alpha+2\beta-1)/\alpha}dr\\&\lesssim_{T}\norm{V}_{C_{T}\calC^{\beta}_{\R^{d}}}\abs{t-s}^{(2\alpha+2\beta-1)/\alpha}\\&=\norm{V}_{C_{T}\calC^{\beta}_{\R^{d}}}\abs{t-s}^{\vartheta},
\end{align*} using that $2\alpha+2\beta-1>0$ and $J^{T}(\partial_{i} V)\in C_{T}^{(\alpha+\beta-1)/\alpha}L^{\infty}_{\R^{d}}$.\\
To estimate the term in \eqref{eq:t2}, we use cancellations between the solution $w^{r,i}$ and $y^{r,i}$. For the argument, we need to distinguish between the cases $\beta\in ((1-\alpha)/2,0)$ and $\beta\in ((2-2\alpha)/3,(1-\alpha)/2]$.
 
\noindent First, we consider the Young case, $\beta\in ((1-\alpha)/2,0)$. Then, we have that $\alpha+2\beta-1>0$. 
We can write the difference of the solutions as follows:
\begin{align}
\MoveEqLeft
w^{r,i}_{s}-J^{T}(\partial_{i}V)_{r} y^{r,i}_{s}\nonumber\\&=[P_{r-s}(J^{T}(\partial_{i}V)_{r}\cdot V^{i}_{r})-J^{T}(\partial_{i}V)_{r}\cdot P_{r-s}V_{r}^{i}]\nonumber\\&\qquad+J^{r}_{s}(\nabla w^{r,i}\cdot V) - J^{r}_{s}(\nabla y^{r,i}\cdot V)
\nonumber\\& =(P_{r-s}-\operatorname{Id})(J^{T}(\partial_{i}V)_{r}\reso V^{i}_{r}+J^{T}(\partial_{i}V)_{r}\arap V^{i}_{r})\label{eq:y1}
\\&\qquad -J^{T}(\partial_{i}V)_{r}\reso (P_{r-s}-\operatorname{Id})V_{r}^{i}+J^{T}(\partial_{i}V)_{r}\arap (P_{r-s}-\operatorname{Id})V_{r}^{i}\label{eq:y2}
\\&\qquad +[P_{r-s}(J^{T}(\partial_{i}V)_{r}\para V^{i}_{r})-J^{T}(\partial_{i}V)_{r}\para P_{r-s}V_{r}^{i}]\label{eq:y3}
\\&\qquad+J^{r}_{s}(\nabla w^{r,i}\cdot V) - J^{T}(\partial_{i}V)_{r}J^{r}_{s}(\nabla y^{r,i}\cdot V).\label{eq:y4}
\end{align} 
Above and below we use the notation $J^{t}_{r}(v):=J^{t}(v)_{r}=J^{t}(v)(r,\cdot)$ for $r\leqslant t$.
The term in \eqref{eq:y1}, we estimate with the semigroup estimate (2.8) from \cite[Lemma 2.5]{kp-sk}, using that $\alpha+2\beta-1\in (0,1)$,
\begin{align*}
\MoveEqLeft
\norm{(P_{r-s}-\operatorname{Id})(J^{T}(\partial_{i}V)_{r}\reso V^{i}_{r}+J^{T}(\partial_{i}V)_{r}\arap V^{i}_{r})}_{L^{\infty}_{\R^{d}}}\\&\lesssim\abs{r-s}^{(\alpha+2\beta-1)/\alpha}\norm{J^{T}(\partial_{i}V)_{r}\reso V^{i}_{r}+J^{T}(\partial_{i}V)_{r}\arap V^{i}_{r})}_{\calC^{\alpha+2\beta-1}_{\R^{d}}} \\&\lesssim\abs{r-s}^{(\alpha+2\beta-1)/\alpha}\norm{V}_{C_{T}\calC^{\beta}_{\R^{d}}}^{2}.
\end{align*} 
The terms in \eqref{eq:y2}, we also estimate with the semigroup and Schauder estimates using that $V\in C_{T}(\calC^{\beta+(1-\gamma')\alpha})^d$ for $\gamma'\in [(1-\beta)/\alpha,1)$ and again that $\alpha+2\beta-1\in (0,1]$ and the embedding $\calC^{(1-\gamma')\alpha}\hookrightarrow L^{\infty}$,
\begin{align*}
\MoveEqLeft
\norm{J^{T}(\partial_{i}V)_{r}\arap (P_{r-s}-\operatorname{Id})V_{r}^{i}}_{L^{\infty}}\\&\lesssim \norm{J^{T}(\partial_{i}V)_{r}\arap (P_{r-s}-\operatorname{Id})V_{r}^{i}}_{\calC^{(1-\gamma')\alpha}_{\R^{d}}}\\&\lesssim \norm{J^{T}(\partial_{i}V)_{r}}_{\calC^{(2-\gamma')\alpha+\beta-1}_{\R^{d}}} \norm{(P_{r-s}-\operatorname{Id})V_{r}^{i}}_{\calC^{-(\alpha+\beta-1)}}\\&\lesssim\norm{V}_{C_{T}(\calC^{\beta+(1-\gamma')\alpha})^d}\abs{r-s}^{(\alpha+2\beta-1)/\alpha}\norm{V}_{C_{T}(\calC^{\beta})^d}.
\end{align*}
The argument for the $\reso$-product is analogous.
Moreover, the term in \eqref{eq:y3} equals the semigroup commutator from \cite[Lemma 2.7]{kp-sk} and using that $\alpha+2\beta-1<\alpha$, we obtain
\begin{align*}
\MoveEqLeft
\norm{[P_{r-s}(J^{T}(\partial_{i}V)_{r}\para V^{i}_{r})-J^{T}(\partial_{i}V)_{r}\para P_{r-s}V_{r}^{i}]}_{L^{\infty}}\\&\lesssim\norm{[P_{r-s}(J^{T}(\partial_{i}V)_{r}\para V^{i}_{r})-J^{T}(\partial_{i}V)_{r}\para P_{r-s}V_{r}^{i}]}_{\calC^{(1-\gamma')\alpha}_{\R^{d}}}\\&\lesssim \abs{r-s}^{(\alpha+2\beta-1)/\alpha}\norm{V}_{C_{T}(\calC^{\beta+(1-\gamma')\alpha})^d}\norm{V}_{C_{T}(\calC^{\beta})^d}.
\end{align*}
For the last term \eqref{eq:y4}, we give the argument for the term involving $y^{r,i}$ and the argument for the term with $w^{r,i}$ is analogue. We decompose
\begin{align*}
J^{r}_{s}(\nabla y^{r,i}\cdot V)&=J^{r}_{s}(\nabla y^{r,i}\reso V+\nabla y^{r,i}\arap V)+\nabla y^{r,i}_s\para J^{r}_{s}(V)\\&\qquad+[J^{r}_{s}(\nabla y^{r,i}\para V)-\nabla y^{r,i}_s\para J^{r}_{s}(V)].
\end{align*}
Due to the interpolation bound (3.13) from \cite[Lemma 3.7]{kp-sk}, we obtain that 
\begin{align*}
\nabla y^{r,i}\in\mathcal{M}_{r}^{(1-\beta)/\alpha}\calC^{(1-\beta)(\frac{\alpha}{\gamma'}-1)}_{\R^{d}}\hookrightarrow\mathcal{M}_{r}^{(1-\beta)/\alpha}L^{\infty}_{\R^{d}}
\end{align*} as $\gamma'<1\leqslant\alpha$. This yields together with \cite[Lemma 2.5]{kp-sk} that
\begin{align}\label{eq:para-est-1}
\norm{\nabla y^{r,i}_s\para J^{r}_{s}(V)}_{L^{\infty}_{\R^{d}}}&\lesssim\norm{\nabla y^{r,i}}_{\mathcal{M}_{r}^{(1-\beta)/\alpha}L^{\infty}_{\R^{d}}}\abs{r-s}^{(\beta-1)/\alpha}\norm{J^{r}_{s}(V)}_{L^{\infty}_{\R^{d}}}
\nonumber\\&\lesssim\norm{\nabla y^{r,i}}_{\mathcal{M}_{r}^{(1-\beta)/\alpha}L^{\infty}_{\R^{d}}}\abs{r-s}^{(\beta-1)/\alpha}\norm{J^{r}_{s}(V)}_{\calC^{(1-\gamma')\alpha}_{\R^{d}}}
\nonumber\\&\lesssim\abs{r-s}^{(\alpha+2\beta-1)/\alpha}\norm{\nabla y^{r,i}}_{\mathcal{M}_{r}^{(1-\beta)/\alpha}L^{\infty}_{\R^{d}}}\norm{V}_{C_{T}\calC^{\beta+(1-\gamma')\alpha}_{\R^{d}}}.
\end{align}
\cite[Corollary 3.2]{kp-sk} furthermore yields that $J^{r}(\nabla y^{r,i}\reso V)\in\mathcal{L}_{r}^{\gamma,2\beta+\alpha-1+(2-\gamma')\alpha}$ using that $V\in C_{T}(\calC^{\beta+(1-\gamma')\alpha})^d$, which implies by the interpolation bound \cite[Lemma 3.7, (3.11)]{kp-sk} and $J^{r}_{r}(v)=0$ that $J^{r}(\nabla y^{r,i}\reso V)\in C_{r}^{(2\beta+\alpha-1+(2-\gamma')\alpha-\gamma\alpha)/\alpha}L^{\infty}$ (and analogously for the $\arap$-product). Thus, as $J^{r}_{r}(v)=0$ and $2-\gamma'-\gamma >0$ as $\gamma\in (0,1)$, $\gamma'\in (0,1)$ , we obtain that
\begin{align*}
\norm{J^{r}_{s}(\nabla y^{r,i}\reso V+\nabla y^{r,i}\arap V)}_{L^{\infty}}\lesssim\abs{r-s}^{(\alpha+2\beta-1)/\alpha}\norm{y^{r,i}}_{\mathcal{L}_{r}^{\gamma,\alpha+\beta}}\norm{V}_{C_{T}(\calC^{\beta+(1-\gamma')\alpha})^d}.
\end{align*}
Due to the commutator \cite[Lemma 3.4]{kp-sk}, we obtain that \begin{align*}
[J^{r}(\nabla y^{r,i}\para V)-\nabla y^{r,i}\para J^{r}(V)]\in\mathcal{L}_{r}^{\gamma,2\beta+\alpha-1+(2-\gamma')\alpha}
\end{align*} and by an interpolation argument as above we thus find
\begin{align*}
\norm{J^{r}_{s}(\nabla y^{r,i}\para V)-\nabla y^{r,i}_s\para J^{r}_{s}(V)}_{L^{\infty}}\lesssim\abs{r-s}^{(\alpha+2\beta-1)/\alpha}\norm{y^{r,i}}_{\mathcal{L}_{r}^{\gamma,\alpha+\beta}}\norm{V}_{C_{T}(\calC^{\beta+(1-\gamma')\alpha})^d}.
\end{align*}
Together, we obtain for the term \eqref{eq:y4}:
\begin{align*} 
\MoveEqLeft
\norm{J^{T}(\partial_{i}V)_{r}J^{r}_{s}(\nabla y^{r,i}\cdot V)}_{L^{\infty}}
\lesssim\abs{r-s}^{(\alpha+2\beta-1)/\alpha}\norm{y^{r}}_{\mathcal{L}_{r}^{\gamma,\alpha+\beta}}\norm{V}_{C_{T}(\calC^{\beta+(1-\gamma')\alpha})^d}^{2}.
\end{align*}
Together with the uniform bound of the norm of $y^{r},w^{r}$ from \cite[Corollary 4.14]{kp-sk} 
the above estimates yield
\begin{align*}
\int_{s}^{t}\norm{w^{r,i}_{s}-J^{T}(\partial_{i}V)_{r} y^{r,i}_{s}}_{L^{\infty}_{\R^{d}}}dr\lesssim\abs{t-s}^{(2\alpha+2\beta-1)/\alpha},
\end{align*}
which, together with the estimate for the integral \eqref{eq:t1} yields the claim in the Young case.

\noindent If $\beta\in ((2-2\alpha)/3,(1-\alpha)/2]$, we need to estimate the integral \eqref{eq:t2} differently. In order to rewrite the difference of the solutions $w^{r,i}_{s}-J^{T}(\partial_{i}V)y^{r,i}_{s}$ in a way so that we can prove the claimed bound, we first specify the paracontrolled structure of $y^{r},w^{r}$.\\
The terminal condition of $w^{r,i}$ has the paracontrolled structure
\begin{align*}
w_{r}^{r,i,j}=J^{T}(\partial_{i} V^{j})_{r}\cdot V^{i}_{r}&=J^{T}(\partial_{i} V^{j})_{r}\reso V^{i}_{r}+J^{T}(\partial_{i} V^{j})_{r}\arap V^{i}_{r}+J^{T}(\partial_{i} V^{j})_{r}\para V^{i}_{r}\\&=:w^{R,\sharp,j}+w^{R,\prime,j}\para V_{r}
\end{align*} with $w^{R,\sharp,j}\in\calC^{(2-\gamma')\alpha+2\beta-1}$ (due to $V\in C_{T}\calC^{\beta+(1-\gamma')\alpha}_{\R^{d}}$) and $w^{R,\prime,j}=J^{T}(\partial_{i} V^{j})_{r}e_{i}\in\calC^{\alpha+\beta-1}_{\R^{d}}$ ($e_{i}$ denoting the $i$-th unit vector). By \cref{thm:singular}, the solution has the following paracontrolled structure
\begin{align}\label{eq:weq}
w^{r,i,j}_{s}
=w^{r,\sharp,i,j}_{s}+\nabla w^{r,i,j}_{s}\para J^{r}(V)_{s}+J^{T}(\partial_{i} V^{j})_{r}e_{i}\para P_{r-s}V_{r}
\end{align} with $w^{r,\sharp,i,j}\in\mathcal{L}_{r}^{\gamma,2(\alpha+\beta)-1}$. 
Then, again by the interpolation estimate \cite[Lemma 3.7, (3.13)]{kp-sk} applied for $\theta=\tilde{\theta}=2\alpha+2\beta-1\in (0,\alpha)$, since $\beta\leqslant (1-\alpha)/2$, and for $\theta=\tilde{\theta}=\alpha+\beta-1\in (0,\alpha)$ 
we obtain together with \cite[Corollary 4.14]{kp-sk} the uniform bound
\begin{align}\label{eq:uniform-w}
\MoveEqLeft
\sup_{r\in [0,T]}[\norm{w^{r,\sharp,i}}_{\mathcal{M}_{r}^{(1-\alpha-2\beta)/\alpha}L^{\infty}_{\R^{d}}}+\norm{\nabla w^{r,i}}_{\mathcal{M}_{r}^{(1-\beta)/\alpha}L^{\infty}_{\R^{d\times d}}}]\nonumber\\&\lesssim
\sup_{r\in [0,T]}[\norm{w^{r,\sharp,i}}_{(\mathcal{L}_{r}^{\gamma,2(\alpha+\beta)-1})^{d}}+\norm{w^{r,i}}_{(\mathcal{L}_{r}^{\gamma,\alpha+\beta})^{d}}]\lesssim_{T,\calV} 1.
\end{align} 
Thus, using an estimate as for \eqref{eq:para-est-1}, we can estimate 
\begin{align}\label{eq:w-bound}
\norm{\nabla w^{r,i,j}_{s}\para J^{r}(V)_{s}}_{L^{\infty}}
\lesssim\abs{r-s}^{(\alpha+2\beta-1)/\alpha}\norm{V}_{C_{T}\calC^{\beta+(1-\gamma')\alpha}_{\R^{d}}}\sup_{r\in[0,T]}\norm{\nabla w^{r,i}}_{\mathcal{M}_{r}^{(1-\beta)/\alpha}L^{\infty}_{\R^{d\times d}}}.
\end{align} 
Furthermore, \eqref{eq:uniform-w} implies that
\begin{align}\label{eq:w-sharp-bound}
\int_{s}^{t}\norm{w^{r,\sharp,i,j}_{s}}_{L^{\infty}}dr&\lesssim \sup_{r\in [0,T]}\norm{w^{r,\sharp,i}}_{\mathcal{M}_{r}^{(1-\alpha-2\beta)/\alpha}L^{\infty}_{\R^{d}}}\,\int_{s}^{t}\abs{r-s}^{(2\beta+\alpha-1)/\alpha}dr\nonumber\\&\lesssim \abs{t-s}^{(2\alpha+2\beta-1)/\alpha}\sup_{r\in [0,T]}\norm{w^{r,\sharp,i}}_{\mathcal{M}_{r}^{(1-\alpha-2\beta)/\alpha}L^{\infty}_{\R^{d}}},
\end{align} since $(2\alpha+2\beta-1)/\alpha>0$.\\
Moreover, for the solution $y^{r,i}$ from above, we have the paracontrolled structure
\begin{align*}
y^{r,i}_{s}=y_{s}^{r,\sharp,i}+e_{i}\para P_{r-s}V_{r}+\nabla y^{r,i}_{s}\para J^{r}(V)_{s}.
\end{align*} 
The bounds \eqref{eq:w-bound}, \eqref{eq:w-sharp-bound} hold analogously for $\nabla w^{i,j}$, $w^{r,\sharp,i,j}$ replaced by $\nabla y^{r,i}$, $y^{r,i,\sharp}$.\\
Furthermore, the interpolation estimate \cite[Lemma 3.7, (3.13)]{kp-sk} (again for $\theta=\alpha+\beta$, $\tilde{\theta}=2\alpha+2\beta-1\in (0,\alpha)$ as $\beta\leqslant (1-\alpha)/2$)   
yields that 
\begin{align*}
\sup_{r\in [0,T]}[\norm{y^{r,i}}_{\mathcal{M}_{r}^{(1-\alpha-2\beta)/\alpha}\calC^{-(\alpha+\beta-1)}}+\norm{y^{r,\sharp,i}}_{\mathcal{M}_{r}^{(1-\alpha-2\beta)/\alpha}L^{\infty}}]\lesssim_{T,\calV} 1.
\end{align*}
With the latter bound, we can estimate, 
\begin{align*}
\MoveEqLeft
\norm{J^{T}(\partial_{i} V)_{r}\reso y^{r,i}_{s}+J^{T}(\partial_{i}V)_{r}\arap y^{r,i}_{s}}_{L^{\infty}_{\R^{d}}}\\&\lesssim\norm{J^{T}(\partial_{i} V)_{r}\reso y^{r,i}_{s}+J^{T}(\partial_{i}V)_{r}\arap y^{r,i}_{s}}_{\calC^{(1-\gamma)\alpha}_{\R^{d}}}\\&\lesssim\norm{J^{T}(\partial_{i} V)_{r}}_{(\calC^{\alpha+\beta+(1-\gamma')\alpha-1})^d}\norm{y^{r,i}_{s}}_{\calC^{-(\alpha+\beta-1)}}\\&\lesssim \abs{r-s}^{(2\beta+\alpha-1)/\alpha}\norm{y^{r,i}}_{\mathcal{M}_{r}^{(1-\alpha-2\beta)/\alpha}\calC^{-(\alpha+\beta-1)}}\norm{V}_{C_{T}(\calC^{\beta+(1-\gamma')\alpha})^d}.
\end{align*}
Using the paracontrolled structure of the solutions $w^{r,i,j},y^{r,i}$, we obtain for $j=1,\dots,d$,
\begin{align}\label{eq:difference}
\MoveEqLeft
w^{r,i,j}_{s}-J^{T}(\partial_{i} V^{j})_{r}\para y^{r,i}_{s}\nonumber\\&=w^{r,\sharp,i,j}_{s}+J^{T}(\partial_{i} V^{j})_{r}\para y^{r,\sharp,i}_{s}+ J^{T}(\partial_{i} V^{j})_{r}\para(\nabla y^{r,i}\para J^{r}(V))_{s} \nonumber\\&\qquad+ (\nabla w^{r,i,j}\para J^{r}(V))_{s}.
\end{align}  
Finally, we can estimate the integral \eqref{eq:t2} using the bounds derived above and \eqref{eq:difference}:
\begin{align}\label{eq:class-estimate}
\MoveEqLeft
\norm[\bigg]{\paren[\bigg]{\int_{s}^{t}(w^{r,i,j}_{s}-J^{T}(\partial_{i} V^{j})_{r}\cdot y^{r,i}_{s})dr}_{j}}_{L^{\infty}_{\R^{d}}}\nonumber\\&\leqslant\norm[\bigg]{\paren[\bigg]{\int_{s}^{t}(w^{r,i,j}_{s}-J^{T}(\partial_{i} V^{j})_{r}\para y^{r,i}_{s})dr}_{j}}_{L^{\infty}_{\R^{d}}}\nonumber
\\&\qquad\qquad+\norm[\bigg]{\paren[\bigg]{\int_{s}^{t}[J^{T}(\partial_{i} V^j)_{r}\reso y^{r,i}_{s}+J^{T}(\partial_{i}V^j)_{r}\arap y^{r,i}_{s}]dr}_{j}}_{L^{\infty}_{\R^{d}}}\nonumber
\allowdisplaybreaks
\\&\leqslant \norm[\bigg]{\int_{s}^{t}w^{r,\sharp,i}_{s}dr}_{L^{\infty}_{\R^{d}}}+\norm{J^{T}(\partial_{i}V)}_{L^{\infty}_{\R^{d}}}\norm[\bigg]{\int_{s}^{t}y^{r,\sharp,i}_{s}dr}_{L^{\infty}}
\nonumber\\&\qquad\qquad+\norm{J^{T}(\partial_{i}V)}_{L^{\infty}_{\R^{d}}}\norm[\bigg]{\int_{s}^{t}(\nabla y^{r,i}\para J^{r}(V))_{s}dr}_{L^{\infty}} \nonumber\\&\qquad\qquad+ \norm[\bigg]{\paren[\bigg]{\int_{s}^{t}(\nabla w^{r,i,j}\para J^{r}(V))_{s}dr}_j}_{L^{\infty}_{\R^{d}}}\nonumber\\&\qquad\qquad+\norm[\bigg]{\paren[\bigg]{\int_{s}^{t}[J^{T}(\partial_{i} V^j)_{r}\reso y^{r,i}_{s}+J^{T}(\partial_{i} V^j)_{r}\arap y^{r,i}_{s}]dr}_{j}}_{L^{\infty}_{\R^{d}}}
\nonumber\\&\lesssim \abs{t-s}^{\vartheta}\sup_{r\in[0,T]}\Big[\norm{w^{r,i}}_{(\mathcal{L}_{r}^{\gamma,\alpha+\beta})^{d}}+\norm{w^{r,i,\sharp}}_{(\mathcal{L}_{r}^{\gamma,2(\alpha+\beta)-1})^{d}}\nonumber\\&\qquad\qquad\qquad\qquad+\norm{\calV}_{\calX^{\beta,\gamma'}}(\norm{y^{r,i}}_{\mathcal{L}_{r}^{\gamma,\alpha+\beta}}+\norm{y^{r,i,\sharp}}_{\mathcal{L}_{r}^{\gamma,2(\alpha+\beta)-1}})\Big].
\end{align} In the last estimate, we used also that $\vartheta=(2\alpha+2\beta-1)/\alpha>0$. Together with the bound for \eqref{eq:t1}, this yields the claim in the rough case. 
\end{proof}
\begin{theorem}\label{thm:mpiws}
Let $\calV\in\calX^{\beta,\gamma}$ for $\beta\in((2-2\alpha)/3,0)$. 
Let $X$ be the solution of the martingale problem for the generator $\mathcal{G}^{\calV}$, starting at $x\in\R^{d}$. \\
Then, there exists a stochastic basis $(\Omega,\F,(\F_t),\p)$ and an $\alpha$-stable symmtric non-degenerate $(\F_{t})$-Lévy process $L$, such that $(X,L,\mathbb{Z}^{V})$ is a weak solution starting at $x\in\R^{d}$. Furthermore, the following representations for $Z=X-x-L$ and $\mathbb{Z}^{V}$ follow for $(s,t)\in\Delta_{T}$
\begin{align*}
&Z_{st}=\E_{s}[u^{t}(t,X_{t})-u^{t}(s,X_{s})],\\& \mathbb{Z}^{V,i}_{st}=\E_{s}[v^{t,i}(t,X_{t})-v^{t,i}(s,X_{s})-J^{T}(\partial_{i} V)(s,X_{s})(u^{t,i}(t,X_{t})-u^{t,i}(s,X_{s}))]
\end{align*} almost surely, for the solutions $v^{t,i},u^{t,i}$ from \eqref{eq:v}, \eqref{eq:u}. 
\end{theorem}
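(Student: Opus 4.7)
The plan is to construct the filtered probability space and the Lévy process $L$ starting from the martingale solution, identify $Z$ and the iterated increment $\mathbb{Z}^V$ through the Kolmogorov solutions $u^{t,i}, v^{t,i,j}$, and verify the required bounds by combining Proposition \ref{thm:class} with the time regularity of these PDE solutions together with \cref{lem:hd}. First I would take $(V^n) \subset C_T(C^\infty_b)^d$ with $(V^n,\mathcal{K}(V^n,V^m)) \to \calV$ in $\calX^{\beta,\gamma}$, solve $\mathcal{G}^{\calV} u^{n,t,i} = V^{n,i}$ with $u^{n,t,i}(t,\cdot) = 0$ and $\mathcal{G}^{\calV} v^{n,t,i,j} = J^T(\partial_i V^j) \cdot V^{n,i}$ with $v^{n,t,i,j}(t,\cdot) = 0$, and apply the martingale problem to these (regular) terminal data to produce $(\F_t)$-martingale differences $M^{n,i}_{st}$ and $\tilde M^{n,i,j}_{st}$ with
\begin{equation*}
u^{n,t,i}(t,X_t)-u^{n,t,i}(s,X_s) = \int_s^t V^{n,i}(r,X_r)\diff r + M^{n,i}_{st}.
\end{equation*}

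Next I would enlarge the canonical filtration to support $L$. Since the martingale part above is, at the smooth level $V^m$, an integral against the compensated Poisson random measure of the jumps of $X$ (resp.\ a Brownian integral when $\alpha=2$, cf.\ the proof of \cref{thm:wsimp}), I define $L$ by the Lévy--Itô decomposition applied to the jump measure of $X$; the martingale problem for test functions $e^{2\pi i\langle \xi,\cdot\rangle}$ (approximated in the domain) fixes the characteristic exponent to be $\psi^\alpha_\nu$, so that $L$ is the required $\alpha$-stable symmetric $(\F_t)$-Lévy process. Setting $Z := X - x - L$ gives a continuous adapted process, and \cref{lem:hd} (transported to $X$ via the smoothing $V^n$) yields $Z^n := \int_0^\cdot V^n(s,X_s)\diff s \to Z$ with the bound $\norm{Z^n - Z}_{(\alpha+\beta)/\alpha, 2} \to 0$. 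Taking conditional expectations in the identity above, sending $n\to\infty$ (uniformly in $t$ by \cite[Corollary~4.14]{kp-sk}) and using that $u^t \in C_t^{(\alpha+\beta)/\alpha} L^\infty$ uniformly in $t$ immediately gives both the representation $Z_{st} = \E_s[u^t(t,X_t) - u^t(s,X_s)]$ and the two bounds in \eqref{eq:zhoelder1}; the second convergence in item $1.)$ of \cref{def:ws} follows analogously after using the martingale-problem identity on $u^{n,t,i} - u^{m,t,i}$.

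For the enhancement, at the smooth level the Dirichlet-process Itô formula applied to $v^{m,t,i,j}(r,X_r) - J^T(\partial_i V^m)(s,X_s) u^{m,t,i}(r,X_r)$ on $[s,t]$ rearranges into
\begin{equation*}
\mathbb{Z}^{m,n,i,j}_{st} = v^{m,t,i,j}(t,X_t) - v^{m,t,i,j}(s,X_s) - J^T(\partial_i V^m)(s,X_s)\bigl(u^{m,t,i}(t,X_t)-u^{m,t,i}(s,X_s)\bigr) + N^{m,n,i,j}_{st},
\end{equation*}
where $N^{m,n,i,j}_{st}$ collects the $(\F_t)$-martingale differences $\tilde M$ and $M$ plus approximation errors $\int_s^t J^T(\partial_i V^j - \partial_i V^m)(r,X_r)V^{n,i}(r,X_r)\diff r$ which vanish as $m,n\to\infty$ by \cref{thm:singular} and the convergence $V^{n,i} \cdot J^T(\partial_i V^{m,j}) \to J^T(\partial_i V^j)\cdot V^i$ in the enhanced topology. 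Passing to the limit defines $\mathbb{Z}^{V,i,j}_{st}$ and yields the claimed representation; applying $\E_s$ kills the martingale part, and the right-hand side is exactly the increment controlled by \cref{thm:class}, delivering $\norm{\mathbb{Z}^V \mid \F}_{(2\alpha+2\beta-1)/\alpha, \infty} < \infty$. The $L^2$-bound $\norm{\mathbb{Z}^V}_{(\alpha+\beta)/\alpha,2} < \infty$ and the convergence in item $2.)$ of \cref{def:ws} with respect to $\norm{\cdot}_{(\alpha+\beta)/\alpha,2}$ are obtained from the uniform time regularity of $v^t,u^t \in C_t^{(\alpha+\beta)/\alpha} L^\infty$ and the locally Lipschitz dependence of the solution map in \cref{thm:singular}.

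The main obstacle is identifying $\mathbb{Z}^V$ in the rough regime $\beta \leq (1-\alpha)/2$: the regularity of $J^T(\partial_i V)$ alone is insufficient to run a direct stochastic-sewing argument (as already noted after \cref{lem:generalws}), so the extra rate $\vartheta = (2\alpha+2\beta-1)/\alpha$ has to come from delicate cancellations between the two PDEs for $v^t$ and $u^t$. This is precisely the content of \cref{thm:class}, whose proof exploits the paracontrolled structure of $v^t$ and $u^t$ together with the Schauder/commutator toolbox of \cite{kp-sk}; granting \cref{thm:class}, the remaining convergences for $\norm{\mathbb{Z}^{m,n} - \mathbb{Z}^V \mid \F}_{(2\alpha+2\beta-1)/\alpha,\infty}$ follow by the same decomposition applied to $v^{t,i,j} - v^{m,t,i,j}$ and $u^{t,i} - u^{m,t,i}$, using the uniform-in-$t$ continuity of the solution map.
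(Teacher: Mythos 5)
You correctly identify \cref{thm:class} as the key technical ingredient and the high-level idea that the representations of $Z$ and $\mathbb{Z}^V$ come from the martingale problem applied to the PDE solutions $u^t,v^t$. However, your proposal differs substantially from the paper's route and contains gaps that would need filling.

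\paragraph{Construction of the probability space and $L$.} You try to build $L$ directly on the probability space of the martingale solution by applying the L\'evy--It\^o decomposition to the jump measure of $X$ and reading off the characteristic exponent from approximate Fourier test functions. This is at best heuristic: in the Brownian case $\alpha=2$ there is no jump measure, and even for $\alpha<2$ you must show that the pure-jump martingale part of $X$ minus compensator is adapted to $(\F_t^X)$ and independent of the past in the L\'evy sense, which implicitly requires that $Z=X-x-L$ is already a continuous adapted process with zero quadratic variation --- but that decomposition is precisely what you are trying to establish. The paper sidesteps this circularity entirely: it works with the approximating \emph{strong} solutions $X^n$ of $dX^n = V^n(t,X^n)dt + dL_t$ on a fixed space carrying a concrete $L$, establishes tightness of $(X^n, Z^{m,n}, \mathbb{Z}^{l,m,n})$ jointly with $L$, and then uses Skorokhod's representation theorem to produce a new basis on which the convergence is almost sure. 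Adaptedness of $L=Y-x-W$ to $(\F_t^Y)$ is then shown a posteriori because $W$ is an a.s.\ limit of $(\F_t^Y)$-adapted processes $W^m$.

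\paragraph{Dirichlet-process It\^o formula.} Your derivation of the identity for $\mathbb{Z}^{m,n,i,j}_{st}$ invokes the Dirichlet-process It\^o formula ``at the smooth level'' applied to $X$. But $X$ is a priori only the martingale solution on Skorokhod space; it is not yet known to be a Dirichlet process, so this It\^o formula is not available. The paper instead applies the It\^o formula to the strong solutions $X^n$ (which are Dirichlet processes since $V^n$ is smooth) and the PDE solutions $u^{t,m,n},v^{t,l,m,n}$ with \emph{smooth} generators $\mathcal{G}^{V^n}$ (so that $u^{t,m,n}\in C^{1,2}_b$), then passes to the limit. Your use of the singular generator $\mathcal{G}^{\calV}$ with approximating right-hand sides is conceptually fine for the \emph{conditional-expectation} bounds (this is indeed what the paper does inside \cref{thm:class}), but it does not give you the pathwise identity you write down.

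\paragraph{$L^2$-bounds and convergences.} You assert that $\norm{\mathbb{Z}^V}_{(\alpha+\beta)/\alpha,2}<\infty$ and the convergence in item $2.)$ follow from ``uniform time regularity of $v^t,u^t$''. The time regularity of the PDE solutions only controls the $C_t L^\infty$-increment, i.e.\ the drift part of the representation; $\mathbb{Z}^V_{st}$ also contains a genuine martingale term (the increments $M_{st}^{v^t}-J^T(\partial_i V)(s,X_s)M_{st}^{u^t}$), and the $L^2$-moment bound requires controlling those via Burkholder--Davis--Gundy for integrals against the compensated Poisson random measure (the paper cites \cite[Lemma 4.4]{kp}) together with the $L^p$-moment bounds from \cref{lem:hd}. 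These BDG estimates, and the resulting Kolmogorov-tightness argument that yields the distributional convergence of the triple, are absent from your proposal. Similarly, \cref{lem:hd} is stated for the approximating solutions $X^n$; ``transporting it to $X$'' is not automatic and the paper does not do this --- it instead obtains the bound on the limit object $W$ from uniform bounds on $W^{m,n}$ plus weak convergence.

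In short: you correctly see that \cref{thm:class} plus the Lipschitz continuity of the PDE solution map carry the analytic weight of the proof, but the probabilistic skeleton (tightness of $(X^n,Z^{m,n},\mathbb{Z}^{l,m,n},L)$, Skorokhod representation, identification of $L$ on the new basis, BDG for the $L^2$-bounds) needs to be supplied and cannot be replaced by the shortcuts you propose.
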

\begin{proof}
Let $(V^{n})\subset C_{T}C^{\infty}_{b}(\R^{d},\R^{d})$ with $(V^{n},\mathcal{K}(V^{n},V^{m}))\to\calV$ in $\calX^{\beta,\gamma}$ for $n,m\to\infty$ (existence by \cref{def:enhanced-dist}). 
Let $X$ be the solution of the $(\mathcal{G}^{\calV},\delta_{x})$-martingale problem. Let, as in the proof of \cref{thm:mainthm1}, $X^{n}$ be the strong solution of
\begin{align*}
X^{n}=x+\int_{0}^{\cdot}V^{n}(s,X_{s}^{n})ds+L=:x+Z^{n,n}+L
\end{align*} and let for $l,m,n\in\N$, 
\begin{align*}
Z^{m,n}_{t}&:=\int_{0}^{t}V^{m}(s,X_{s}^{n})ds,\quad t\in[0,T]\\\mathbb{Z}^{l,m,n}_{st}&:=\paren[\bigg]{\int_{s}^{t}[J^{T}(\partial_{i}V^{j,l})(r,X_{r}^{n})-J^{T}(\partial_{i}V^{j,l})(s,X_{s}^{n})]dZ_{r}^{i,m,n}}_{i,j},\quad (s,t)\in\Delta_{T}.
\end{align*} 
\cref{thm:mainthm1} proves the distributional convergence $(X^{n}, Z^{n,n})\Rightarrow (X,Z)$, where $Z$ is a continuous process.
Let $u^{t,m,n}=(u^{t,m,n,i})_{i=1,\dots,d}$ and $v^{t,l,m,n}=(v^{t,l,m,n,i,j})_{i,j=1,\dots,d}$ solve 
\begin{align*}
\mathcal{G}^{V^{n}}u^{t,m,n,i}=V^{m,i}, \quad \mathcal{G}^{V^{n}}v^{t,l,m,n,i,j}=J^{T}(\partial_{i}V^{l,j})\cdot V^{m,i}
\end{align*} with zero terminal condition at time $t\in [0,T]$. By convergence of the mixed resonant products, i.e.~$(V^{n},\mathcal{K}(V^{n},V^{m}))\to\calV$ in $\calX^{\beta,\gamma}$, and by continuity of the PDE solution map (\cref{thm:singular}), we obtain that $u^{t,m,n,i}\to u^{t,i}$  and $v^{t,l,m,n,i,j}\to v^{t,i,j}$ in $D_{t}$, where $u^{t},v^{t}$ solve the PDEs \eqref{eq:u}, \eqref{eq:v}.
An application of Itô's formula (cf.~in the proof of \cref{thm:mainthm1}) then yields the representations
\begin{align}\label{eq:Zmn-mart}
Z^{m,n}_{st}=u^{t,m,n}(t,X_{t}^{n})-u^{t,m,n}(s,X_{s}^{n})+M_{st}^{u^{t,m,n}}
\end{align} for the martingale differences $M^{u^{t,m,n}}_{st}$ defined as 
\begin{align*}
M^{u^{t,m,n}}_{st}:=\int_{s}^{t}\int_{\R^{d}\setminus\{0\}}[u^{t,m,n}(r,X_{r-}^{n}+y)-u^{t,m,n}(r,X_{r-}^{n})]\hat{\pi}(dr,dy)
\end{align*} for the compensated Poisson random measure $\hat{\pi}$, respectively $M_{st}^{u^{t,m,n}}:=\int_{s}^{t}\nabla u^{t,m,n}(r,X_{r}^{n})dB_{r}$ in the case $\alpha=2$. Moreover we have that
\begin{align}\label{eq:ZZmn-mart}
\mathbb{Z}^{l,m,n}_{st}(i,j)&=v^{t,l,m,n,i,j}(t,X_{t}^{n})-v^{t,l,m,n,i,j}(s,X_{s}^{n})\nonumber\\&\qquad-J^{T}(\partial_{i}V^{j,l})(s,X_{s}^{n})\paren[\big]{u^{t,m,n,i}(t,X_{t}^{n})-u^{t,m,n,i}(s,X_{s}^{n})}\nonumber\\&\qquad+M_{st}^{l,m,n}(i,j)
\end{align} for the martingale differences defined by 
\begin{align}\label{eq:mart-diff}
M^{l,m,n}_{st}(i,j):=M_{st}^{v^{t,l,m,n}}-J^{T}(\partial_{i}V^{j,l})(s,X_{s}^{n})M_{st}^{u^{t,m,n}}.
\end{align} 
Notice that $\E_{s}[M^{l,m,n}_{st}]=0$.
By convergence of the PDE solutions, we obtain that for all large enough $l,m,n$,
\begin{align}\label{eq:lmn-bound}
\sup_{t\in[0,T]}[\norm{u^{t,m,n}}_{D_{t}^d}+\norm{v^{t,l,m,n}}_{D_{t}^{d\times d}}]\leqslant 2 \sup_{t\in[0,T]}[\norm{u^{t}}_{D_{t}^d}+\norm{v^{t}}_{D_{t}^{d\times d}}]<\infty,
\end{align} where the right-hand side is finite due to \cite[Corollary 4.14]{kp-sk}.\\
By \eqref{eq:lmn-bound}, $J^{T}(\partial_{i}V^{j})\in C_{T}L^{\infty}$ and \cite[Lemma 4.4]{kp} applied to both $M^{u^{t,m,n}}_{st},M^{v^{t,l,m,n}}_{st}$ we can estimate $\norm{(s,t)\mapsto M_{st}^{l,m,n}}_{(\alpha+\beta)/\alpha,p}$ for $p\in 2\N$. Together with \cref{lem:hd}
we thus obtain, for any $p\in 2\N$ the following uniform bound for large enough $l,n,m$:
\begin{align}\label{eq:p-bound}
\sup_{l,m,n}[\norm{Z^{m,n}}_{(\alpha+\beta)/\alpha,p}+\norm{\mathbb{Z}^{l,m,n}}_{(\alpha+\beta)/\alpha,p}]<\infty.
\end{align} 
Kolmogorov's continuity criterion and \eqref{eq:p-bound} yield tightness in $C(\Delta_{T},\R^{d+d\times d})$ of the laws of the processes $(Z^{m,n},\mathbb{Z}^{l,m,n})_{l,m,n}$. Furthermore, the uniform bound 
\begin{align}\label{eq:Z-infty-bound}
\sup_{m,n}\norm{Z^{m,n}\mid\F}_{(\alpha+\beta)/\alpha,\infty}<\infty
\end{align} follows from the representation \eqref{eq:Zmn-mart} and $u^{t,m,n}(t,X_{t}^{n})=0=u^{t,m,n}(t,X_{s}^{n})$ and thus for $m,n$ large enough,
\begin{align*}
\norm{\E_{s}[Z^{m,n}_{st}]}_{L^{\infty}(\p)}&\leqslant \sup_{t\in[0,T]}\norm{u^{t,m,n}}_{C_{t}^{(\alpha+\beta)/\alpha}L^{\infty}_{\R^{d}}}\abs{t-s}^{(\alpha+\beta)/\alpha}\\&\leqslant 2\sup_{t\in[0,T]}\norm{u^{t}}_{(\mathcal{L}_{t}^{0,\alpha+\beta})^d}\abs{t-s}^{(\alpha+\beta)/\alpha}
\end{align*} and the $\mathcal{L}_{t}^{0,\alpha+\beta}$-norm of $u^{t}$ is bounded in $t\in[0,T]$ due to \cite[Corollary 4.14]{kp-sk}.
By \cref{thm:class}, $X$ is of class $\mathcal{K}^{\vartheta}$. The bound from that proposition applied to the solutions $u^{t,m,n},v^{t,l,m,n}$ yields together with \eqref{eq:lmn-bound}, that for large enough $l,m,n$
\begin{align}\label{eq:infty-bound}
\sup_{l,m,n}\norm{\mathbb{Z}^{l,m,n}\mid\F}_{(2(\alpha+\beta)-1)/\alpha,\infty}<\infty.
\end{align} 
By tightness of $(Z^{m,n}, \mathbb{Z}^{l,m,n})_{l,m,n}$, uniqueness of the limit, since $X$ is the unique solution of the martingale problem (cf. \cref{thm:mainthm1}), and continuity of $(Z,\mathbb{Z}^{V})$ we can thus deduce the distributional convergence 
\begin{align*}
(Z^{m,n}, \mathbb{Z}^{l,m,n},L)\stackrel{l,m,n\to\infty}{\Longrightarrow} (Z,\mathbb{Z}^{V},L)\qquad\text{ in }\quad C(\Delta_{T},\R^{d+d\times d})\times D([0,T],\R^{d}).
\end{align*}
Hence, we obtain the distributional convergence
\begin{align*}
(X^{n},Z^{m,n},\mathbb{Z}^{l,m,n})\stackrel{l,m,n\to\infty}{\Longrightarrow} (X,Z,\mathbb{Z}^{V})\qquad\text{ in }\quad D([0,T],\R^{d})\times C(\Delta_{T},\R^{d+d\times d})
\end{align*} as $X^{n}=x+Z^{n,n}+L$, i.e. $X^{n}$ is given by a continuous map of $(Z^{n,n},L)$ (using \cite[Proposition VI.1.23]{Jacod2003}).\\
An application of Skorokhods representation theorem (cf.~\cite[Theorem 6.7]{Billingsley1968}) then yields that there exists a probability space $(\Omega,\F,\p)$ and random variables 
\begin{align*}
(Y^{n},W^{m,n},\mathbb{W}^{l,m,n})_{l,m,n}\quad\text{and}\quad(Y,W,\mathbb{W})
\end{align*} on $(\Omega,\F,\p)$ with $\text{Law}((Y^{n},W^{m,n},\mathbb{W}^{l,m,n}))=\text{Law}((X^{n},Z^{m,n},\mathbb{Z}^{l,m,n}))$ for $l,m,n\in\N$ and $\text{Law}((Y,W,\mathbb{W}))=\text{Law}((X,Z,\mathbb{Z}^{V}))$, such that the convergence 
\begin{align}\label{eq:conv-triple}
(Y^{n},W^{m,n},\mathbb{W}^{l,m,n})\stackrel{l,m,n\to\infty}{\longrightarrow}(Y,W,\mathbb{W})
\end{align} holds almost surely with respect to the topology on $D([0,T],\R^{d})\times C(\Delta_{T},\R^{d+d\times d})$ (i.e.~$J_{1}$-topology on the Skorokhod space and uniform topology on the space of continuous functions on $\Delta_{T}$).\\ 
We define the filtration as the completion of the canonical filtration of $Y$, $(\F_{t}):=(\F_{t}^{Y})\subset\F$. The filtration is right-continuous, since $Y$ is càdlàg and by construction complete. It follows that $L:=Y-x-W$ is an $\alpha$-stable symmetric non-degenerate $(\F_{t}^{(Y,W)})$-Lévy process, because $\text{Law}(X,Z)=\text{Law}(Y,W)$. Below, we show that $W$ is the almost sure limit of $(\F^{Y}_{t})$-adapted processes $(W^{m})_{m}$. This then implies, using also completeness of the filtration, that $L$ is $(\F^{Y}_t)$-measurable and thus also a $(\F_{t}^{Y})$-Lévy process.\\
Moreover, we have that $0=Z^{m,n}-\int_{0}^{\cdot}V^{m}(r,X^{n}_{r})dr\stackrel{d}{=}W^{m,n}-\int_{0}^{\cdot}V^{m}(r,Y^{n}_{r})dr$. This implies that $W^{m,n}=\int_{0}^{\cdot}V^{m}(r,Y^{n}_{r})dr$ almost surely. Analogously, we deduce the representation $\mathbb{W}^{l,m,n}_{st}(i,j)=\int_{s}^{t}[J^{T}(\partial_{i}V^{j,l})(r,Y_{r}^{n})-J^{T}(\partial_{i}V^{j,l})(s,Y_{s}^{n})]dW_{r}^{i,m,n}$. Let $W^{m}$, $\mathbb{W}^{l,m}$ be defined analogously with $Y^{n}$ replaced by $Y$. This yields the represenations \eqref{eq:Zmn-mart}, \eqref{eq:ZZmn-mart} for $W^{m,n},\mathbb{W}^{l,m,n}$. By almost sure convergence of \eqref{eq:conv-triple}, letting first $n\to\infty$, we obtain that for $(s,t)\in\Delta_{T}$,
\begin{align*}
W_{st}&=u^{t}(t,Y_{t})-u^{t}(s,Y_{s})+M_{st}^{u^{t}},\\
W^{m}_{st}&=u^{t,m}(t,Y_{t})-u^{t,m}(s,Y_{s})+M_{st}^{u^{t,m}},
\end{align*} and
\begin{align*}
\mathbb{W}_{st}(i,j)&=v^{t,i,j}(t,Y_{t})-v^{t,i,j}(s,Y_{s})\nonumber\\&\qquad-J^{T}(\partial_{i}V^{j,l})(s,Y_{s})\paren[\big]{u^{t,i}(t,Y_{t})-u^{t,i}(s,Y_{s})}\nonumber\\&\qquad+M_{st}^{v^{t,i,j}}-J^{T}(\partial_{i}V^{j,l})(s,Y_{s})M_{st}^{u^{t,i}},
\end{align*} and
\begin{align*}
\mathbb{W}_{st}^{l,m}(i,j)&=v^{t,l,m,i,j}(t,Y_{t})-v^{t,l,m,i,j}(s,Y_{s})\nonumber\\&\qquad-J^{T}(\partial_{i}V^{j,l})(s,Y_{s})\paren[\big]{u^{t,m,i}(t,Y_{t})-u^{t,m,i}(s,Y_{s})}\nonumber\\&\qquad+M_{st}^{v^{t,l,m,i,j}}-J^{T}(\partial_{i}V^{j,l})(s,Y_{s})M_{st}^{u^{t,m,i}}.
\end{align*}
Herein, we used that convergence in the $J_{1}$-topology implies in particular convergence Lebesgue almost everywhere in $[0,T]$ and that $Y$ almost surely does not jump at fixed times $t$ (cf.~in the proof of \cref{thm:mainthm1}).
The differences $M^{u^{t}}_{st},M^{u^{t,m}}_{st},M^{v^{t}}_{st},M^{v^{t,l,m}}_{st}$ are defined analogously as above and $u^{t,m}=(u^{t,m,i})_{i=1,\dots,d}$ and $v^{t,l,m}=(v^{t,l,m,i,j})_{i,j=1,\dots,d}$ solve 
\begin{align*}
\mathcal{G}^{\calV}u^{t,m,i}=V^{m,i}, \quad \mathcal{G}^{\calV}v^{t,l,m,i,j}=J^{T}(\partial_{i}V^{l,j})\cdot V^{m,i}
\end{align*} with zero terminal condition at $t\in [0,T]$.

\noindent It remains to prove that for $l,m\to\infty$
\begin{align}\label{eq:conv-1}
\norm{W^{m}-W}_{(\alpha+\beta)/\alpha,2}+\norm{W^{m}-W\mid\F}_{(\alpha+\beta)/\alpha,\infty}\to 0
\end{align} and
\begin{align}\label{eq:conv-2}
\norm{\mathbb{W}^{l,m}-\mathbb{W}}_{(\alpha+\beta)/\alpha,2}+\norm{\mathbb{W}^{l,m}-\mathbb{W}\mid\F}_{(2(\alpha+\beta)-1)/\alpha,\infty}\to 0.
\end{align}
The bounds \eqref{eq:zhoelder1}, \eqref{eq:ahoelder1} for the limits $W,\mathbb{W}$ then follow from the convergences \eqref{eq:conv-1} and \eqref{eq:conv-2} and the uniform bounds \eqref{eq:p-bound}, \eqref{eq:Z-infty-bound} and \eqref{eq:infty-bound} above.\\
The proof of \cref{lem:hd} shows the bound
\begin{align*}
\norm{W^{m}-W}_{(\alpha+\beta)/\alpha,2}+\norm{W^{m}-W\mid\F}_{(\alpha+\beta)/\alpha,\infty}\lesssim\sup_{t\in[0,T]}\norm{u^{t,m}-u^{t}}_{(\mathcal{L}_{t}^{0,\alpha+\beta})^d}.
\end{align*} 
The right-hand side vanishes if $m\to\infty$ by the uniform Lipschitz continuity from \cite[Corollary 4.14]{kp-sk}. 
Together this yields \eqref{eq:conv-1}. Analogously, we can argue for the convergence of $\norm{\mathbb{W}^{l,m}-\mathbb{W}}_{(\alpha+\beta)/\alpha,2}$, using \cite[Corollary 4.14]{kp-sk} and that
\begin{align*}
\norm{\mathbb{W}^{l,m}-\mathbb{W}}_{(\alpha+\beta)/\alpha,2}\lesssim\sup_{t\in[0,T]}[\norm{v^{t,l,m}-v^{t}}_{(\mathcal{L}_{t}^{0,\alpha+\beta})^{d\times d}}+\norm{V}_{C_{T}\calC^{\beta}_{\R^{d}}}\norm{u^{t,m}-u^{t}}_{(\mathcal{L}_{t}^{0,\alpha+\beta})^d}].
\end{align*}
Moreover, the estimate \eqref{eq:class-estimate} from the proof of \cref{thm:class} yields the bound
\begin{align*}
\MoveEqLeft
\norm{\mathbb{W}^{l,m}-\mathbb{W}\mid\F}_{(2(\alpha+\beta)-1)/\alpha,\infty}\\&\lesssim\sup_{ r\in [0,T]}\Big[\norm{w^{r,l,m}-w^{r}}_{(\mathcal{L}_{r}^{\gamma,\alpha+\beta})^{d\times d}}+\norm{w^{r,l,m,\sharp}-w^{r,\sharp}}_{(\mathcal{L}_{r}^{\gamma,2(\alpha+\beta)-1})^{d\times d}}\\&\qquad\qquad+\norm{\calV}_{\calX^{\beta,\gamma'}}(\norm{y^{r,m}-y^{r}}_{(\mathcal{L}_{r}^{\gamma,\alpha+\beta})^{d}}+\norm{y^{r,m}-y^{r}}_{(\mathcal{L}_{r}^{\gamma,2(\alpha+\beta)-1})^d})\Big],
\end{align*} where $w^{r}=(w^{r,i,j})_{i,j}$ denotes the solution of $\mathcal{G}^{\calV}w^{r,i,j}=0$ with terminal condition $w^{r,i,j}_{r}=J^{T}(\partial_{i}V^{j})_{r}\cdot V^{i}_{r}$ at time $r$ and $y^{r}=(w^{r,i})_{i}$ denotes the solution of $\mathcal{G}^{\calV}y^{r,i}=0$ with $y^{r,i}_{r}=V^{i}_{r}$. The right-hand side vanishes by the uniform Lipschitz bound from \cite[Corollary 4.14]{kp-sk}. Thus, together \eqref{eq:conv-2} follows.
Finally, we can of course rename $(Y,W,\mathbb{W})$ as $(X,Z,\mathbb{Z}^{V})$.
\end{proof}
\noindent The following theorem generalizes Itô's formula for rough weak solutions $X$.
\begin{theorem}\label{thm:genIto}
Let $\calV\in\mathcal{X}^{\beta,\gamma}$ for $\beta\in (\frac{2-2\alpha}{3},0)$ and let $(X,L,\mathbb{Z}^{V})$ be a weak solution started in $x\in\R^{d}$. Let $u\in D_{T}(V)$ be such that $v:=(\partial_{t}-\La)u$ is paracontrolled by $V$ in the sense that $v=v^{\sharp}+v^{\prime}\para V$ with $v^{\sharp}\in C_{T}\calC^{(2-\gamma)\alpha+2\beta-1}$ 
and $v^{\prime}\in C_{T}\calC^{\alpha+\beta-1}_{\R^{d}}$ and such that $u\in C^{1}([0,T],\calC^{\beta})$ (i.e. continuously differentiable in time with values in $\calC^{\beta}$).
Then, if $\alpha\in (1,2)$, the following Itô-formula holds true:
\begin{align*}
u(t,X_{t})&=u(0,x)+\int_{0}^{t}(\partial_{s}-\La)u(s,X_{s})ds + \int_{0}^{t}\nabla u(s,X_{s})\cdot d(Z,\mathbb{Z}^{V})_{s}\\&\qquad+\int_{0}^{t}\int_{\R^{d}\setminus\{0\}}[u(s,X_{s-}+y)-u(s,X_{s-})]\hat{\pi}(ds,dy),
\end{align*} where $Z=X-x-L$ and $\hat{\pi}$ denotes the compensated Poisson random measure of $L$. If $\alpha=2$ and $L=B$ for a Brownian motion $B$, the martingale is replaced by $\int_{0}^{t}\nabla u(s,X_{s})\cdot dB_{s}$. In the formula above, $\int_{0}^{t}(\partial_{s}-\La)u(s,X_{s})ds$ is defined as the limit of $\paren[\big]{\int_{0}^{t}(\partial_{s}-\La)u^{n}(s,X_{s})ds}_{n}$ in $L^{2}(\p)$ for the smooth mollifications $(u^{n}=P_{n^{-1}}u)_{n}$.
\end{theorem}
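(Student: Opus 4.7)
My approach is to mollify $u$ in space, apply the classical Itô formula for Dirichlet processes to the smooth approximants $u^n := P_{n^{-1}} u$, and pass to the limit $n \to \infty$. Since $P_{n^{-1}}$ commutes with $\partial_t$ and maps tempered distributions into $C^\infty_b$, the hypothesis $u \in C^1([0,T], \calC^\beta) \cap C_T\calC^{\alpha+\beta}$ ensures $u^n \in C^{1,2}_b([0,T]\times \R^d)$. Because $X = x + Z + L$ is a Dirichlet process by \cref{rem:Dirichlet}, I can invoke the Itô formula of \cite[Theorem 3.1]{cjml} in its standard extension to time-dependent $C^{1,2}$ functions to obtain for every $n$
\begin{align*}
u^n(t, X_t) = u^n(0, x) + \int_0^t (\partial_s - \La) u^n(s, X_s)\, ds + \int_0^t \nabla u^n(s, X_s) \cdot dZ_s + M^n_t,
\end{align*}
where $M^n_t$ is the compensated Poisson integral against $\hat{\pi}$ of the increments of $u^n$ (stable case $\alpha \in (1,2)$) or $\int_0^t \nabla u^n(s, X_s)\cdot dB_s$ (case $\alpha = 2$).

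Next, I would show that four of the five terms converge in $L^2(\p)$. The terms $u^n(t, X_t)$ and $u^n(0, x)$ converge trivially, since $u$ lies in the separable Besov space from \eqref{eq:sep} and hence $u^n \to u$ uniformly on $[0,T]\times \R^d$. For the martingale I plan to apply the Burkholder--Davis--Gundy inequality of \cite[Lemma 8.21]{peszat_zabczyk_2007} together with $u^n - u \to 0$ in $C_T\calC^{\alpha+\beta-\varepsilon}$ and the integrability bound $\int_{\R^d\setminus\{0\}}(|y|^{2(\alpha+\beta-\varepsilon)} \wedge 1)\,\mu(dy) < \infty$, which holds because $\beta > (2-2\alpha)/3$ forces $\alpha+\beta > \alpha/2$; this is exactly the argument behind \cite[Lemma 4.4]{kp}. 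For the stochastic integral against $Z$, once I establish $u^n \to u$ in $D_T(V)$, \cref{thm:stabilityint} gives
\begin{align*}
\int_0^t \nabla u^n(s, X_s)\cdot dZ_s \longrightarrow \int_0^t \nabla u(s, X_s)\cdot d(Z, \mathbb{Z}^V)_s \quad \text{in } L^2(\p),
\end{align*}
where the classical integral on the left coincides with the rough integral of the smooth integrand on the right by the last assertion of \cref{thm:roughint}. Subtracting these four convergent terms from the Itô identity for $u^n$ then forces $\int_0^t (\partial_s - \La) u^n(s, X_s)\, ds$ to converge in $L^2(\p)$, and by construction its limit equals $\int_0^t (\partial_s - \La) u(s, X_s)\, ds$ in the sense stated in the theorem, which yields the desired formula.

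The hard part will be establishing $\|u^n - u\|_{D_T(V)} \to 0$, because $P_{n^{-1}}$ does not commute with the paraproduct. To handle this I intend to decompose, using $u = u^\sharp + u' \para J^T(V)$,
\begin{align*}
u^n - u = (P_{n^{-1}} u^\sharp - u^\sharp) + (P_{n^{-1}} u' - u')\para J^T(V) + \bigl[ P_{n^{-1}}(u' \para J^T(V)) - (P_{n^{-1}} u')\para J^T(V) \bigr],
\end{align*}
and control each piece separately: the first two by continuity of $P_{n^{-1}}$ on $\calC^\theta$ and density of smooth functions in the separable subspaces, so that their $\mathcal{L}_T^{0,\alpha+\beta}$, $\mathcal{L}_T^{0,\alpha+\beta-1}$ and $\mathcal{L}_T^{0,2(\alpha+\beta)-1}$ norms vanish; the commutator bracket by a semigroup paraproduct commutator estimate of the type of \cite[Lemma 3.4]{kp-sk}, which yields the extra $\alpha$-regularity needed to place the bracket into $\mathcal{L}_T^{0,2(\alpha+\beta)-1}$ and to send its norm to zero as $n\to\infty$. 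A secondary, standard point is uniformity in $t$ of the BDG step, handled by the uniform regularity bounds on $u^n - u$ and on the Lévy jump measure.
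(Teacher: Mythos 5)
Your overall strategy matches the paper's: mollify $u^n = P_{n^{-1}}u$, apply the Dirichlet-process It\^o formula of \cite[Theorem 3.1]{cjml} to each $u^n$, then pass to the limit using BDG for the jump martingale and \cref{thm:roughint,thm:stabilityint} for the rough integral. Your observation that convergence of $\int_0^t(\partial_s-\La)u^n(s,X_s)ds$ follows for free from the It\^o identity once the other four terms converge is a valid shortcut, given how the theorem defines that integral. The paper instead spends a separate stochastic-sewing argument (with the germ $\Xi_{s,t} = \int_s^t T_{s,r}v_r(X_s)\,dr$ and the paracontrolled structure of $v=(\partial_t-\La)u$) to establish existence and identification of that limit intrinsically; your route is shorter and logically sufficient for the stated claim, though it provides less structural information about the limiting additive functional.

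However, there is a genuine gap in your treatment of $\|u^n-u\|_{D_T(V)}\to 0$. With your implicit choice $(u^n)' := P_{n^{-1}}u'$ and remainder taken against the fixed reference $J^T(V)$, the remainder difference reduces (after the piece $(P_{n^{-1}}u'-u')\para J^T(V)$ cancels between $u^n-u$ and $((u^n)'-u')\para J^T(V)$) to
\begin{align*}
u^{n,\sharp}-u^\sharp = (P_{n^{-1}}u^\sharp - u^\sharp) + \bigl[P_{n^{-1}}(u'\para J^T(V)) - (P_{n^{-1}}u')\para J^T(V)\bigr].
\end{align*}
The first term vanishes in $\mathcal{L}_T^{0,2(\alpha+\beta)-1}$, but the bracket does not have the required regularity. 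Decomposing it,
\begin{align*}
\bigl[P_t(u'\para w) - u'\para P_t w\bigr] + (u'-P_t u')\para w + u'\para(P_t w - w),\qquad w:=J^T(V),
\end{align*}
the first piece is the standard semigroup-paraproduct commutator and gains the regularity $\alpha+\beta-1$ of $u'$, placing it in $\calC^{2(\alpha+\beta)-1+(1-\gamma)\alpha}$. But $(u'-P_t u')\para w$ and $u'\para(P_t w - w)$ live only in $\calC^{\theta_w}$ with $\theta_w = \alpha+\beta+(1-\gamma)\alpha$, which is \emph{strictly less} than $2(\alpha+\beta)-1$ in the admissible range $\gamma \geq (2\alpha+2\beta-1)/\alpha$, $\beta > (2-2\alpha)/3$. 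So your commutator bracket is not controllable in $\mathcal{L}_T^{0,2(\alpha+\beta)-1}$, and the references to a semigroup-paraproduct commutator lemma will not close this. The fix is to keep the paracontrolled derivative $u'$ fixed and shift the reference field, i.e.~view $u^n \in D_T(V^n)$ with $V^n=P_{n^{-1}}V$ and remainder $u^{n,\sharp}:=u^n - u'\para P_{n^{-1}}J^T(V)$. Then
\begin{align*}
u^{n,\sharp}-u^\sharp = (P_{n^{-1}}u^\sharp - u^\sharp) + \bigl[P_{n^{-1}}(u'\para J^T(V)) - u'\para P_{n^{-1}}J^T(V)\bigr],
\end{align*}
where the bracket is exactly the standard commutator with the semigroup on the high-frequency factor; it gains the needed $\alpha+\beta-1$ regularity and sends the norm to zero. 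This is precisely what the paper's decomposition and the convention in \cref{thm:stabilityint} (namely $u^m\in D_T(V^m)$ with $\|u^m-u\|_{D_T(V)}\to 0$) accommodate.
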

\begin{remark}
The assumptions on $u$ are satisfied for solutions $u$ of Kolmogorov backward equations with regular terminal conditions $u^{T}\in\calC^{2\alpha+2\beta-1}$ and right-hand sides $f\in C_{T}L^{\infty}$.
\end{remark}
\begin{proof}[Proof of \cref{thm:genIto}]
We give the proof in the case $\alpha\in (1,2)$, $\alpha=2$ is similar. Since $u\in C^{1}([0,T],\calC^{\beta})$ the mollification satisfies for $n\in\N$,
\begin{align*}
u^{n}_{t}=P_{n^{-1}}u_{t}\in C^{1}([0,T],C^{\infty}_{b})
\end{align*} 
In particular, $u^{n}\in C^{1,2}_b([0,T]\times\R^{d})$ and applying \cite[Theorem 3.1]{cjml} to $u^{n}(t,X_{t})$ yields
\begin{align*}
u^{n}(t,X_{t})&=u^{n}(0,x)+\int_{0}^{t}(\partial_{s}-\La)u^{n}(s,X_{s})ds + \int_{0}^{t}\nabla u^{n}(s,X_{s})\cdot dZ_{s}\\&\quad+\int_{0}^{t}\int_{\R^{d}\setminus\{0\}}[u^{n}(s,X_{s-}+y)-u^{n}(s,X_{s-})]\hat{\pi}(ds,dy).
\end{align*}
Due to convergence of the mollifications $u^{n}\to u$ in $\mathcal{L}^{0,\alpha+\beta}_{T}$ and analogue arguments as for \cite[Lemma 4.4]{kp} (with Burkholder-Davis-Gundy inequality and separating in large and small jumps), we obtain that the $\hat{\pi}$-martingales converge in $L^{2}(\p)$ to the one with $u^{n}$ replaced by $u$.\\
By \cref{thm:roughint} it follows that 
almost surely,
\begin{align*}
\int_{0}^{t}\nabla u^{n}(s,X_{s})\cdot dZ_{s}=\int_{0}^{t}\nabla u^{n}(s,X_{s})\cdot d(Z,\mathbb{Z}^{V})_{s}.
\end{align*}
The stability of the rough stochastic integral from \cref{thm:stabilityint} for the rough stochastic integral yields convergence of the integrals if $(u^{n},u^{\prime})\to (u,u^{\prime})$ in $D_{T}(V)$. The latter follows from the convergence of the mollification $u^{n}\to u$ in $\mathcal{L}_{T}^{0,\alpha+\beta}$ and 
\begin{align*}
u^{n,\sharp}&=u^{n}-u^{\prime}\para J^{T}(V)\\&=P_{n^{-1}}u^{\sharp}+\paren[\big]{P_{n^{-1}}[u^{\prime}\para J^{T}(V)]-u^{\prime}\para P_{n^{-1}}J^{T}(V)}\\&\to u^{\sharp}
\end{align*} in $\mathcal{L}_{T}^{0,2(\alpha+\beta)-1}$ due to \cite[Corollary 3.2]{kp-sk} and the commutator \cite[Lemma 3.4]{kp-sk}. 
It remains to show that the additive functional 
\begin{align*}
\int_{0}^{t}(\partial_{s}-\La)u^{n}(s,X_{s})ds
\end{align*} 
converges in $L^{2}(\p)$. 
For $r\in [0,T]$ we have by assumption for $v_{r}:=(\partial_{r}-\La)u_{r}$, that
\begin{align*}
v_{r}=v_{r}^{\sharp}+v^{\prime}_{r}\para V_{r},
\end{align*} 
for $v_{r}^{\sharp}\in \calC^{(2-\gamma)\alpha+2\beta-1}$  and $v_{r}^{\prime}\in\calC^{\alpha+\beta-1}$. Thus $v_{r}$ is an admissible terminal condition (in the sense of \cref{thm:singular})  for solving the Kolmogorov equation on $[0,r]$ with paracontrolled terminal condition $v_{r}$.\\
By \cref{thm:wsimp}, $X$ is a martingale solution and in particular a strong Markov process. 
Denote again by $(T_{s,r})_{0\leqslant s\leqslant r\leqslant T}$ its semigroup.
Utilizing the ideas of \cite[section 3]{le}, we define,
\begin{align*}
\Xi_{s,t}:=\int_{s}^{t}T_{s,r}v_{r}(X_{s})dr.
\end{align*}
We let $y_{s}:=T_{s,r}v_{r}$ for $s\in[0,r]$. Then $y$ is a mild solution of the Kolmogorov equation with terminal condition $v_{r}$.\\
We apply the stochastic sewing \cref{lem:ss} to $\Xi$.
Therefore, we write for $s\leqslant u\leqslant t$,
\begin{align*}
\Xi_{s,u,t}=\Xi_{s,t}-\Xi_{s,u}-\Xi_{u,t}&=\int_{u}^{t}[T_{s,r}v_{r}(X_{s})-T_{u,r}v_{r}(X_{u})]dr
\end{align*}
Due to the Markov property of $X$, we see that $\E_{s}[\Xi_{s,u,t}]=0$. It is left to estimate $\norm{\Xi_{s,u,t}}_{L^{2}(\p)}$.
To this end, we use that $y=T_{\cdot,r}v_{r}\in\mathcal{M}_{r}^{-\beta/\alpha}L^{\infty}$ with a uniform bound in $r\in[0,T]$ due to \cite[Corollary 4.14]{kp-sk} and the interpolation estimate \cite[Lemma 3.7, (3.13)]{kp-sk}, such that we can estimate,
\begin{align*}
\norm{y_{u}}_{L^{\infty}}=\norm{T_{u,r}v_{r}}_{L^{\infty}}\lesssim \abs{r-u}^{\beta/\alpha}\sup_{r\in[0,T]}\norm{y}_{\mathcal{M}_{r}^{-\beta/\alpha}L^{\infty}}
\end{align*} and thus it follows that
\begin{align*}
\norm{\Xi_{s,u,t}}_{L^{2}(\p)}\lesssim\abs{t-u}^{1+\beta/\alpha}=\abs{t-u}^{(\alpha+\beta)/\alpha},
\end{align*} with $(\alpha+\beta)/\alpha>1/2$. Thus the stochastic sewing lemma applies and we obtain existence of the integral $I_{t}^{X}(v):=\lim_{\abs{\Pi}\to 0}\sum_{s,t\in\Pi}\Xi_{s,t}\in L^{2}(\p)$. Furthermore, if we consider $v^{n}:=(\partial_{t}-\La)u^{n}$, we obtain that $I_{t}^{X}(v^{n})\to I_{t}^{X}(v)$ due to stability of the stochastic sewing integral and continuity of the Kolmogorov solution map. The last step is then to see that almost surely $I_{t}^{X}(v^{n})=\int_{0}^{t}v^{n}(s,X_{s})ds$. Indeed, this can be deduced from uniqueness of the stochastic sewing integral and $\E_{s}[v^{n}_{r}(X_{r})]=T_{s,r}v^{n}_{r}(X_{s})$ as $v^{n}_{r}\in C_{b}$ (cf.~also \cite[Proposition 3.7]{le}). 
\end{proof}

\end{section}

\begin{section}{Ill-posedness of the canonical weak solution concept in the rough regime}\label{sec:wsyc}

Below, we introduce so-called canonical weak solutions. We prove in \cref{cor:wsemp-y}, that in the Young case the canonical weak solution concept is well-posed (and equivalent to martingale solutions). We finalize by proving in \cref{thm:nonu} that canonical weak solutions are in general non-unique. We construct a counterexample, that justifies the latter.\\ 
We start by defining the concept of canonical weak solutions. 
\begin{definition}[Canonical weak solution]\label{def:wsy}
Let $\alpha\in (1,2]$, $V\in\CTcalC^{\beta}_{\R^{d}}$ for $\beta\in (\frac{2-2\alpha}{3},0)$. Let $x\in\R^{d}$. We call a tuple $(X,L)$ a canonical weak solution starting at $X_{0}=x\in\R^{d}$, if there exists a stochastic basis $(\Omega,\F, (\F_{t})_{t\geqslant 0},\p)$, such that $L$ is an $\alpha$-stable symmetric non-degenerate $(\F_{t})$-Lévy process and almost surely
\begin{align*}
X=x+Z+L,
\end{align*} where $Z$ is an $(\F_{t})$-adapted, continuous process with 
\begin{align*}
\norm{Z}_{(\alpha+\beta)/\alpha,2}+\norm{Z\mid\F}_{(\alpha+\beta)/\alpha,\infty}<\infty.
\end{align*} 
Moreover there exists a sequence $(V^{n})\subset C_{T}C^{\infty}_{b}$ with $V^{n}\to V$ in $\CTcalC^{\beta}$, such that 
\begin{align}\label{eq:yz}
\lim_{n\to\infty}\int_{0}^{\cdot}V^{n}(s,X_{s})ds=Z,
\end{align} with convergence in $L^{2}(\p)$, uniformly in $[0,T]$. 
\end{definition}
\begin{remark}
The definition of canonical weak solutions is similar to \cite[Definition 2.1]{Athreya2018}. The difference is the assumption that $\norm{Z\mid\F}_{(\alpha+\beta)/\alpha,\infty}<\infty$. However, boths bounds on $Z$ are natural to assume and motivated by \cref{lem:hd}.
\end{remark}
\begin{remark}[One-dimensional, time-homogeneous case with $\alpha=2$]\label{rem:cansol} In this case, for any approximation $(V^{n})$ of $V$, the weak limit of the strong solutions $(X^{n})$ with 
\begin{align*}
dX^{n}_{t}=V^{n}(X_{t}^{n})dt+dB_{t}
\end{align*} is the same and given by the solution of the $\mathcal{G}^{\calV}$-martingale problem.  
The one-dimensional case is special in this sense. The above is true, because in $d=1$ the resonant products $(J^{T}(\partial_{x} V^{n})\reso V^{n})$ converge to the same  limit for any approximation $(V^{n})$.
This can be seen with Leibniz rule considering $((-\Delta)^{-1}(\partial_{x} V^{n})\reso V^{n})_{n}=(J^{\infty}(\partial_{x}V^{n})\reso V^{n})_{n}$. Indeed, we have with $v^{n}$, such that $V^{n}=\partial_{x} v^{n}$
\begin{align*}
\lim_{n\to\infty}(-\Delta)^{-1}(\partial_{x}V^{n})\reso V^{n}=-\frac{1}{2}\lim_{n\to\infty}\partial_{x}(v^{n}\reso v^{n})=-\frac{1}{2}\partial_{x}(v\reso v)
\end{align*} using that $(-\Delta_{x})^{-1}(\partial_{x}V^{n})=-v^{n}$ and $v^{n}\reso v^{n}\to v\reso v$ as products of functions. Notice moreover that this does not imply, that the limit of the mixed resonant products $((-\Delta)^{-1}(\partial_{x} V^{n})\reso V)_{n}$ is uniquely determined. In fact in the rough case the latter limit is in general not unique (cf. \cref{prop:exv} below). This fact will imply that, even in the one-dimensional case, a weak solution in the sense of \cref{def:wsy} is in general non-unique in law.
\end{remark}
\begin{remark}[One and all sequences $(V^{n})$] 
One may ask, if requiring the convergence \eqref{eq:yz} to hold for all (instead of one) approximating sequences $(V^{n})$ renders the solution concept well-posed. Though, if we require \eqref{eq:yz} to hold for all such sequences $(V^{n})$, then in the rough case the solution of the $\mathcal{G}^{\calV}$-martingale problem won't be a solution. This follows also from the fact that the limit of the mixed resonant products is non-unique. Thus, we would expect non-existence of solutions in the rough case. 
Hence, requiring \eqref{eq:yz} to hold for all such sequences $(V^{n})$ makes the definition too restrictive.
\end{remark}
\noindent The results from the previous section imply the following corollary.
\begin{corollary}\label{cor:wsemp-y}
Let $\beta\in ((1-\alpha)/2,0)$ (Young regime) and $V\in C_{T}\calC^{\beta}$. Then, $X$ is a solution of the $(\mathcal{G}^{V},\delta_{x})$-martingale problem if and only if $X$ is a canonical weak solution starting at $x\in\R^{d}$. In particular, the canonical weak solution concept is well-posed in the Young regime.
\end{corollary}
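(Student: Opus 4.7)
The equivalence splits into two directions, and the well-posedness claim in the Young regime then follows from the uniqueness of martingale solutions (Theorem~\ref{thm:mainthm1}) combined with existence (Theorem~\ref{thm:mpiws}).

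\textit{Martingale $\Rightarrow$ canonical weak.} By Theorem~\ref{thm:mainthm}, a martingale solution $X$ admits a lift to a rough weak solution $(X,L,\mathbb{Z}^{V})$ in the sense of Definition~\ref{def:ws}, equipped with an approximating sequence $(V^{n})$. The convergence $\|Z^{n}-Z\|_{(\alpha+\beta)/\alpha,2}\to 0$ trivially implies $L^{2}(\p)$-uniform convergence via $\sup_{t\in[0,T]}\|Z^{n}_{t}-Z_{t}\|_{L^{2}}\leq T^{(\alpha+\beta)/\alpha}\|Z^{n}-Z\|_{(\alpha+\beta)/\alpha,2}\to 0$. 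The Hölder-moment bounds on $Z$ required by Definition~\ref{def:wsy} coincide with those of Definition~\ref{def:ws}, so $X$ is a canonical weak solution.

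\textit{Canonical weak $\Rightarrow$ martingale.} Given a canonical weak solution $(X,L)$ with sequence $(V^{n})$ and drift $Z$, I would adapt the proof of Theorem~\ref{thm:wsimp} to the Young regime. For any $u^{T}\in\calC^{3}$ and $f\in\CTcalC^{\epsilon}$ with $\epsilon>2-\alpha$, let $u$ and $u^{n}$ solve $\mathcal{G}^{V}u=f$ and $\mathcal{G}^{V^{n}}u^{n}=f$ with $u(T,\cdot)=u^{n}(T,\cdot)=u^{T}$. By Theorem~\ref{thm:singular}, $u^{n}\to u$ in $D_{T}(V)$. Since $(\alpha+\beta)/\alpha>1/2$, $Z$ has zero quadratic variation and $X=x+Z+L$ is a Dirichlet process (Remark~\ref{rem:Dirichlet}); Itô's formula for the smooth $u^{n}\in C^{1,2}_{b}$, followed by substitution of the PDE $\mathcal{G}^{V^{n}}u^{n}=f$, gives
\begin{align*}
u^{n}(t,X_{t})=u^{n}(0,x)+\int_{0}^{t}f(s,X_{s})\,ds+M^{n}_{t}+R^{n}_{t},
\end{align*}
where $M^{n}$ converges in $L^{2}(\p)$ uniformly in $t$ to the target martingale by the BDG argument used in Theorem~\ref{thm:wsimp}, and $R^{n}_{t}:=\int_{0}^{t}\nabla u^{n}(s,X_{s})\cdot dZ_{s}-\int_{0}^{t}\nabla u^{n}(s,X_{s})\cdot V^{n}(s,X_{s})\,ds$ (Föllmer and Lebesgue integrals, respectively).

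The principal step is to show $R^{n}_{t}\to 0$ in $L^{2}(\p)$ uniformly in $t$. The proof of Lemma~\ref{lem:generalws} rests only on the Hölder bounds on $Z$ from Definition~\ref{def:wsy} (not on any $\mathbb{Z}^{V}$), so the stochastic sewing integral $I(\nabla u,Z)_{t}$ is well-defined for $\nabla u\in C_{T}^{(\alpha+\beta-1)/\alpha}L^{\infty}\cap \CTcalC^{\alpha+\beta-1}$, the Young condition $\beta>(1-\alpha)/2$ supplying the sewing margin $(2\alpha+2\beta-1)/\alpha>1$. For smooth $\nabla u^{n}$ one has $\int_{0}^{t}\nabla u^{n}\cdot dZ_{s}=I(\nabla u^{n},Z)_{t}$, and stability in the integrand yields $I(\nabla u^{n},Z)\to I(\nabla u,Z)$ in $L^{2}(\p)$. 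The hard part is to show $\int_{0}^{t}\nabla u^{n}V^{n}\,ds\to I(\nabla u,Z)_{t}$ from only the $L^{2}(\p)$-uniform convergence of $Z^{n}$ (not Hölder-seminorm convergence). This is handled by applying the stochastic sewing lemma directly to the germ $\Xi^{n}_{s,t}:=\nabla u^{n}(s,X_{s})\cdot(Z-Z^{n})_{s,t}$: the conditional-expectation estimate uses $\|Z\mid\F\|_{(\alpha+\beta)/\alpha,\infty}$ together with Jensen's inequality $\|\E_{s}[(Z^{n}-Z)_{s,t}]\|_{L^{2}}\leq 2\sup_{u}\|Z^{n}_{u}-Z_{u}\|_{L^{2}}\to 0$, while the $L^{2}$-norm bound on $\delta\Xi^{n}$ is closed by interpolating between the vanishing uniform $L^{2}$-estimate and the Hölder bound on $Z$. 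Once $R^{n}\to 0$ is in hand, taking $n\to\infty$ in the Itô expansion gives that $u(t,X_{t})-u(0,x)-\int_{0}^{t}f(s,X_{s})\,ds$ is a martingale for every admissible $u$, so $X$ solves the $(\mathcal{G}^{V},\delta_{x})$-martingale problem.
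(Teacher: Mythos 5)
Your forward direction (martingale $\Rightarrow$ canonical weak) matches the paper's: Theorem~\ref{thm:mpiws} furnishes a rough weak solution, which is in particular a canonical weak solution once one notes that $\|Z^n-Z\|_{(\alpha+\beta)/\alpha,2}\to 0$ implies uniform $L^2(\p)$-convergence. For the reverse direction your plan is in the same spirit as the paper's, but differs in one structural choice, and as written leaves a gap.

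The paper's route is: upgrade the hypothesis first, by arguing that uniform $L^2$-convergence of $Z^n$ together with $\|Z\|_{(\alpha+\beta)/\alpha,2}<\infty$ yields, via interpolation, $\|Z^n-Z\|_{\theta,2}\to 0$ for any $\theta<(\alpha+\beta)/\alpha$; it then feeds this into Lemma~\ref{lem:generalws} and the Young-regime part of the proof of Theorem~\ref{thm:wsimp}. The Young hypothesis $\beta>(1-\alpha)/2$ leaves a strict margin so that the sewing condition $\theta+(\alpha+\beta-1)/\alpha>1$ is still satisfied for $\theta$ slightly below $(\alpha+\beta)/\alpha$. You instead apply the stochastic sewing lemma directly to the error germ $\Xi^n_{s,t}=\nabla u^n(s,X_s)\cdot(Z-Z^n)_{s,t}$. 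This is the same computation in a different order, and in principle it can work, but the sketch is not closed: the stochastic sewing lemma requires a conditional-expectation bound of the form $\Gamma_1^n|t-s|^{1+\epsilon_1}$ with $\Gamma_1^n\to 0$, whereas the estimate you invoke, $\|\E_s[(Z^n-Z)_{s,t}]\|_{L^2}\le 2\sup_u\|Z^n_u-Z_u\|_{L^2}\to 0$, produces smallness but no power of $|t-s|$. To get both, you must interpolate between the uniform-$L^2$ estimate and a Hölder-moment bound; but that interpolation needs a bound on $\|Z^n\|_{(\alpha+\beta)/\alpha,2}$ (and on $\|Z^n\mid\F\|_{(\alpha+\beta)/\alpha,\infty}$) that is \emph{uniform in $n$}, and Definition~\ref{def:wsy} supplies such bounds only for the limit $Z$, not for the approximants $Z^n=\int_0^\cdot V^n(s,X_s)\,ds$. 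You should make this uniformity requirement explicit and either derive such a bound or reduce the claim to $\theta<(\alpha+\beta)/\alpha$ as the paper does. Note that the paper's interpolation claim faces the same implicit uniformity issue, so the comparison is fair; but your proposal, by choosing to run the sewing lemma directly on the error germ, brings this point to the surface without resolving it.
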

\begin{proof}
If $X$ is a martingale solution, then $X$ is a canonical weak solution by \cref{thm:mpiws}, since a weak solution in the sense of \cref{def:ws} is in particular a canonical weak solution. For the reverse implication, notice that \eqref{eq:yz} and the assumption $Z\in C_{T}^{(\alpha+\beta)/\alpha}L^{2}(\p)$ imply that $\norm{Z^{n}-Z}_{\theta,2}\to 0$ for any $\theta<(\alpha+\beta)/\alpha$. Then, \cref{lem:generalws} and the arguments in the proof of \cref{thm:wsimp} for $\beta$ in the Young regime (that do not use the bounds \eqref{eq:ahoelder1} on $\mathbb{Z}^{V}$ and the convergence $2.)$) imply that $X$ is a solution of the $(\mathcal{G}^{V},\delta_{x})$-martingale problem. 
\end{proof}
\noindent If $\beta\leqslant\frac{1-\alpha}{2}$, we show that the solution concept from \cref{def:wsy} is ill-posed. The idea is as follows.
We construct two different lifts of $V$ with two different solutions of the respective Kolmogorov backward equations, which yields two different weak solutions $X$ in law. 
The next lemma proves the existence of such desired lifts. We give its proof after the proof of \cref{thm:nonu}.
\begin{lemma}\label{prop:exv}
Let $d=1$ and $\alpha=2$. Let $(P_{t})_{t\geqslant 0}$ be the heat semigroup, that is $P_{t}f=\mathcal{F}^{-1}(\exp(\frac{t}{2}\abs{2\pi \cdot}^{2})\hat{f})$.  
Then, there exists $V\in C_{T}\calC^{\beta}$ (in fact, time-independent) and two sequences $(V^{n})\subset C_{T}C^{\infty}_{b}$, $(W^{n})\subset C_{T}C^{\infty}_{b}$ such that $V^{n}\to V\in C_{T}\calC^{\beta}$ and $W^{n}\to V\in C_{T}\calC^{\beta}$ and such that  $J^{T}(\partial_{x} V^{n})\reso V\to\mathcal{V}_{2}$ and $J^{T}(\partial_{x} W^{n})\reso V\to\mathcal{W}_{2}$ in $C_{T}\calC^{2\beta+1}$, where $\mathcal{V}_{2}=\mathcal{W}_{2}+C$ for a constant $C\neq 0$. 
\end{lemma}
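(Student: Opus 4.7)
The plan is to exhibit a deterministic time-independent lacunary Fourier series $V \in \calC^\beta$ strictly in the rough regime, together with two smooth approximating sequences whose resonant products with $V$ differ in the limit by a genuine nonzero scalar constant $C$. The construction exploits the fact that, when $2\beta+1<0$, a perturbation $h^n$ that is $\calC^\beta$-negligible can nevertheless contribute an order-one constant to the otherwise ill-defined renormalized resonant product $J^T(\partial_x V) \reso V$.

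I would fix $\beta \in (-2/3, -1/2)$, so that $2\beta+1<0$, and set
\[
V(x) := \sum_{k \geqslant 1} a_k \cos(2\pi 2^k x), \qquad a_k := 2^{-k\beta}/k .
\]
By lacunarity, $\|\Delta_j V\|_{L^\infty} \asymp 2^{-j\beta}/j$, placing $V$ in the separable version of $\calC^\beta$. I take $V^n := \sum_{k=1}^n a_k \cos(2\pi 2^k x)$ (spectral truncation; $V^n \to V$ in $C_T\calC^\beta$) and $W^n := V^n + h^n$ with
\[
h^n(x) := c_n \sin(2\pi 2^n x), \qquad c_n := \frac{2\pi C \cdot 2^n}{a_n} ,
\]
for a fixed $C \neq 0$. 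Since $h^n$ sits in the $n$-th Littlewood--Paley block, $\|h^n\|_{\calC^\beta} = 2^{n\beta}|c_n| = 2\pi|C|\cdot n \cdot 2^{n(2\beta+1)} \to 0$ (the exponential decay from $2\beta+1<0$ dominating the polynomial factor), so also $W^n \to V$ in $C_T\calC^\beta$.

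Next, I would compute $J^T(\partial_x h^n) \reso V$ explicitly. The Fourier action of the heat semigroup gives
\[
J^T(\partial_x h^n)(s,x) = \frac{c_n\bigl(1 - e^{-2\pi^2 2^{2n}(T-s)}\bigr)}{\pi\, 2^n}\cos(2\pi 2^n x) .
\]
By the lacunary support of $V$, only the modes $k=n-1, n, n+1$ can contribute to $\cos(2\pi 2^n x) \reso V$. The identity $\cos^2(2\pi 2^n x) = \tfrac{1}{2}(1+\cos(2\pi 2^{n+1}x))$ isolates a constant-in-$x$ piece $a_n/2$ from the diagonal $k=n$ term, while the accompanying cross-terms from $k = n, n\pm 1$ yield only oscillations at frequencies $\sim 2^n$, whose $\calC^{2\beta+1}$-norms decay like $2^{n(2\beta+1)} \to 0$. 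With the chosen $c_n$, the constant-in-$x$ component equals exactly $C(1 - e^{-2\pi^2 2^{2n}(T-s)})$ and converges to the scalar constant $C$. By bilinearity, $\calW_2 := \lim_n J^T(\partial_x W^n) \reso V = \calV_2 + C$ in $C_T \calC^{2\beta+1}$, where $\calV_2 := \lim_n J^T(\partial_x V^n) \reso V$, delivering the claim with a genuine nonzero scalar constant.

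\textbf{Main obstacle.} The technical heart is the simultaneous tuning of $c_n$: it must render $h^n$ invisible at the $\calC^\beta$-level (forcing $c_n = o(2^{-n\beta})$) while still producing a bounded renormalization constant (requiring $c_n a_n / 2^n$ to stay of order one). These two conditions are compatible \emph{only} when $2\beta+1<0$; in the Young regime $2\beta+1>0$ the Bony estimate $\|u\reso v\|_{\calC^{2\beta+1}}\lesssim \|u\|_{\calC^{\beta+1}}\|v\|_{\calC^\beta}$ would force the renormalization contribution to vanish with $h^n$, consistent with the well-posedness of the canonical weak solution concept in the Young case and sharpening the picture that the non-uniqueness phenomenon is a genuine artifact of the rough regime.
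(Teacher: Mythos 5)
Your construction is correct and essentially the same as the paper's: a lacunary Fourier series $V$, spectral truncations $V^{n}$, and a perturbation $h^{n}$ concentrated in the top Littlewood--Paley block with amplitude $\asymp 2^{n}/a_{n}$, tuned so that $\norm{h^{n}}_{\calC^{\beta}}\to 0$ while the diagonal resonant interaction with $V$ produces the constant $C$ (the paper uses complex exponentials with $a_{k}=2^{k/2}$ and reduces w.l.o.g.\ to $J^{\infty}$, whereas you use cosines, a general $\beta\in(-2/3,-1/2)$, and track the factor $1-e^{-2\pi^{2}2^{2n}(T-s)}$ explicitly --- purely cosmetic differences). The only caveat, shared with (and hidden in) the paper's own reduction to $J^{\infty}$, is that the convergence of this time factor is not uniform up to $s=T$, so it is not a gap of your argument relative to the paper's proof.
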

\begin{theorem}\label{thm:nonu}
Let $\beta\leqslant\frac{1-\alpha}{2}$ and $V\in C_{T}\calC^{\beta}_{\R^{d}}$ and $V\neq 0$. Then, canonical weak solutions in the sense of \cref{def:wsy} are in general non-unique in law.
\end{theorem}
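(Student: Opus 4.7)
The plan is to exploit Lemma~\ref{prop:exv} to produce two distinct enhancements of the same singular drift $V$ and then to read off non-uniqueness of canonical weak solutions via the equivalence of rough weak solutions with martingale solutions from \cref{thm:mainthm}. Specializing to $d=1$ and $\alpha=2$, which lies in the rough regime precisely when $\beta\leqslant -1/2$, Lemma~\ref{prop:exv} supplies a time-independent $V\in\calC^{\beta}$ together with two smooth approximating sequences $(V^{n}),(W^{n})\subset C_{T}C_{b}^{\infty}$, both converging to the same $V$ in $C_{T}\calC^{\beta}$, whose mixed resonant limits yield two different enhancements $\calV=(V,\calV_{2})$ and $\mathcal{W}=(V,\mathcal{W}_{2})$ in $\calX^{\beta,\gamma}$ with $\calV_{2}-\mathcal{W}_{2}=C$ for some non-zero constant $C\in\R$.

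By \cref{thm:mainthm1} each of these enhancements admits a unique martingale solution, call them $X^{\calV}$ and $X^{\mathcal{W}}$, started at some $x\in\R$. Applying \cref{thm:mpiws} promotes them to rough weak solutions and, in particular, to canonical weak solutions in the sense of \cref{def:wsy} to the \emph{same} SDE with singular drift $V$, witnessed respectively by $(V^{n})$ and $(W^{n})$. Hence it suffices to show $X^{\calV}$ and $X^{\mathcal{W}}$ have distinct laws. For this I would use the martingale problem: for every $g\in\calC^{3}$ let $u^{\calV}, u^{\mathcal{W}}$ be the solutions of $\calG^{\calV}u^{\calV}=0=\calG^{\mathcal{W}}u^{\mathcal{W}}$ with terminal data $g$ at time $T$, both provided by \cref{thm:singular}. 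The martingale property then gives $\E[g(X^{\calV}_{T})]=u^{\calV}(0,x)$ and $\E[g(X^{\mathcal{W}}_{T})]=u^{\mathcal{W}}(0,x)$, so $X^{\calV}\stackrel{d}{=}X^{\mathcal{W}}$ would force $u^{\calV}(0,\cdot)\equiv u^{\mathcal{W}}(0,\cdot)$ for every $g\in\calC^{3}$.

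The main obstacle is to contradict this rigidity, i.e.\ to translate the identity $\calV_{2}-\mathcal{W}_{2}=C$ at the level of the enhancements into a non-trivial identity at the level of the two PDEs. Substituting the paracontrolled ansatz $u=u^{\sharp}+\partial_{x}u\para J^{T}(V)$ into both generators and expanding $V\cdot\partial_{x}u$ via Bony's decomposition, only one contribution to the resonant part $V\reso\partial_{x}u$ feels the enhancement, namely the one containing the limit $V\reso\partial_{x}J^{T}(V)$; every other term is built from $V$ alone and from the paraproduct/commutator estimates of \cite{kp-sk}, which are common to the two operators and therefore cancel. A careful computation should yield
\begin{align*}
(\calG^{\calV}-\calG^{\mathcal{W}})u \;=\; (\calV_{2}-\mathcal{W}_{2})\,\partial_{x}u \;=\; C\,\partial_{x}u.
\end{align*}
Hence $u^{\calV}=u^{\mathcal{W}}$ would force $C\,\partial_{x}u^{\calV}\equiv 0$, so $u^{\calV}$ is spatially constant. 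Choosing $g$ non-constant (for instance $g=\sin$) and invoking the continuity of the solution map of \cref{thm:singular} to see that $u^{\calV}(0,\cdot)$ approaches $g$ in $\calC^{2\alpha+2\beta-1}$ as $T\downarrow 0$, so that $u^{\calV}(0,\cdot)$ is non-constant for sufficiently small $T$, produces the desired contradiction. This yields $X^{\calV}\neq X^{\mathcal{W}}$ in law, which is exactly the non-uniqueness claim; the harder book-keeping with the commutators in the displayed identity is the only genuinely new computation required.
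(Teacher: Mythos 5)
Your overall strategy — invoke Lemma~\ref{prop:exv} to manufacture two enhancements $\calV=(V,\calV_2)$ and $\calW=(V,\calW_2)$ of the same $V$ with $\calV_2-\calW_2=C\neq 0$, pass through the unique martingale solutions $X^{\calV},X^{\calW}$ to canonical weak solutions of the same SDE, and then distinguish them via the Kolmogorov PDE and the identity $\calG^{\calV}=\calG^{\calW}+C\,\partial_x$ — is the same skeleton as the paper's. The two proofs diverge in how they extract the final contradiction, and yours has a gap at precisely that point.

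The gap: from the martingale problem you correctly obtain $\E[g(X^{\calV}_T)]=u^{\calV}(0,x)$ and $\E[g(X^{\calW}_T)]=u^{\calW}(0,x)$, and hence equality in law (for all $x$) gives $u^{\calV}(0,\cdot)\equiv u^{\calW}(0,\cdot)$ for each $g$. But in the very next line you argue as if $u^{\calV}\equiv u^{\calW}$ on the full strip $[0,T]\times\R$, which is what you need in order to subtract the two PDEs and conclude $C\,\partial_x u^{\calV}\equiv 0$. Equality of the time-$0$ slices for all terminal data $g$ does not by itself give equality of the solutions at all intermediate times. You would have to add a restart/Markov-semigroup step: note that both $X^{\calV}$ and $X^{\calW}$ are strong Markov (by \cref{thm:mainthm1}), run the same martingale-problem identity from any intermediate starting time $s$, and conclude that the transition operators $T^{\calV}_{s,T}$ and $T^{\calW}_{s,T}$ coincide, hence $u^{\calV}(s,\cdot)=u^{\calW}(s,\cdot)$ for all $s$. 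Only then does the operator identity force $\partial_x u^{\calV}\equiv 0$. The subsequent $T\downarrow 0$ argument is also not quite in the scope of the continuity statement in \cref{thm:singular} (which is continuity in $(u^T,f,\calV)$ for a fixed horizon, not in the horizon itself); you would need the short-time convergence $u^{\calV}(T-\epsilon,\cdot)\to g$ as $\epsilon\downarrow 0$, which follows from the mild formulation but should be invoked explicitly.

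For comparison, the paper avoids both of these extra steps. It keeps the right-hand side equal to $V$ (and $V^{n,i}$) with zero terminal condition, so that the PDE solutions $u,\tilde u$ directly represent the conditional expectations of the drift: $\E_i[Z^i_{T-s}]=\lim_n u_{n,i}(s,x)$. It then uses the operator identity to relate $u$ and $\tilde u$ by an explicit shift and observes that if $u\equiv\tilde u$ then $u$ is constant, which forces $V\equiv 0$, excluded by hypothesis. This produces a concrete $(s,x)$ at which $\E_1[Z^1_{T-s}]\neq\E_2[Z^2_{T-s}]$, which directly separates the laws of the two canonical weak solutions without any contradiction-by-semigroup or short-time limit. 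Your route is salvageable with the Markov-restart fill-in, but as written the step $u^{\calV}(0,\cdot)\equiv u^{\calW}(0,\cdot)\Rightarrow u^{\calV}\equiv u^{\calW}$ is a genuine missing idea.
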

\begin{proof}[Proof of \cref{thm:nonu}]
We construct two solutions that are not equal in law. To that end, we let $d=1$, $\alpha=2$. 
By \cref{prop:exv}, there exists $V\in C_{T}\calC^{\beta}$ and two sequences $(V^{n,i})$ for $i=1,2$ with $(V^{n,1},J^{T}(\partial_{x}V^{n,1})\reso V)\to (V,\mathcal{V}_{2})$ and $(V^{n,2},J^{T}(\partial_{x}V^{n,2})\reso V)\to (V,\mathcal{W}_{2})$ in $C_{T}\calC^{\beta}\times C_{T}\calC^{2\beta+\alpha-1}$ and $\mathcal{V}_{2}=\mathcal{W}_{2}+C$ for $C\neq 0$.\\
Let $u$ be the solution of $\mathcal{G}^{(V,\mathcal{V}_{2})}u=V$ with $u(T,\cdot)=0$ and $\tilde{u}$ be the solution of $\mathcal{G}^{(V,\mathcal{W}_{2})}\tilde{u}=V$, $\tilde{u}(T,\cdot)=0$. Since $\mathcal{G}^{(V,\mathcal{V}_{2})}=\mathcal{G}^{(V,\mathcal{W}_{2})}+C\partial_{x}$, it follows that 
\begin{align*}
\tilde{u}(t,y)=u(t,y-(T-t)C),\quad (t,y)\in [0,T]\times\R.
\end{align*} 
In particular, there exists $(s,x)\in[0,T)\times\R$, such that $u(s,x)\neq\tilde{u}(s,x)$ (otherwise $u$ is constant, which implies that $V= 0$, which is excluded in the theorem). Consider the shifted solutions $v(t,x):=u(t+s,x)$ and  $\tilde{v}(t,x):=\tilde{u}(t+s,x)$, $(t,x)\in [0,T]\times\R$. Then we have that $v(0,x)\neq\tilde{v}(0,x)$. \\ 
Let $X^{1}$ be the solution of the $(\mathcal{G}^{(V,\mathcal{V}_{2})},\delta_{x})$ martingale problem and $X^{2}$ be the solution of the  $(\mathcal{G}^{(V,\mathcal{W}_{2})},\delta_{x})$ martingale problem. 
By \cref{thm:mpiws}, there exist stochastic basis $(\Omega_{1},\F^{1},(\F_{t}^{1}),\p_{1})$ and $(\Omega_{2},\F^{2},(\F_{t}^{2}),\p_{2})$, such that $X^{i}$ for $i=1,2$ satisfy
\begin{align*}
X^{i}=x+Z^{i}+B^{i},\text{ a.s., where } Z^{i}=\lim_{n\to\infty}\int_{0}^{\cdot}V^{n,i}(s,X^{i}_{s})ds\in L^{2}(\p_{i})
\end{align*}  
and such that $\norm{Z^{i}}_{(\alpha+\beta)/\alpha,L^{2}(\p_{i})}+\norm{Z^{i}\mid\F^{i}}_{(\alpha+\beta)/\alpha,L^{\infty}(\p_{i})}<\infty$.
In particular, $X^{1}$ and $X^{2}$ are canonical weak solutions in the sense of \cref{def:wsy}.\\ We prove that $\text{Law}(X^{1})\neq \text{Law}(X^{2})$.
But this is clear, if we show that 
\begin{align*}
\E_{1}[Z^{1}_{T-s}]\neq\E_{2}[Z^{2}_{T-s}].
\end{align*} 
Let $u_{n,1}$ be the solution of $\mathcal{G}^{(V,\mathcal{V}_{2})}u_{n,1}=V^{n,1}$ with $u_{n,1}(T,\cdot)=0$ and $u_{n,2}$ be the solution of $\mathcal{G}^{(V,\mathcal{W}_{2})}u_{n,2}=V^{n,2}$ with $u_{n,2}(T,\cdot)=0$. Let $v^{n,1},v^{n,2}$ be the shifted solutions, that solve the same equations as $u^{n,1},u^{n,2}$ with $v^{n,1}(T-s,\cdot)=v^{n,2}(T-s,\cdot)=0$. Then $u_{n,1}\to u$ and $u_{n,2}\to\tilde{u}$ in $C_{T}L^{\infty}_{\R^{d}}$. As $X^{1}$ solves the $(\mathcal{G}^{(V,\mathcal{V}_{2})},\delta_{x})$ martingale problem and $X^{2}$ solves the  $(\mathcal{G}^{(V,\mathcal{W}_{2})},\delta_{x})$ martingale problem, we have (abbreviating the martingale term by $M^{v_{n,1}}$) 
\begin{align*}
\E_{1}[Z^{1}_{T-s}]&=\lim_{n\to\infty}\E_{1}\bigg[\int_{0}^{T-s}V^{n,1}(r,X^{1}_{r})dr\bigg]
\\&=\lim_{n\to\infty}\E_{1}[v_{n,1}(T-s,X^{1}_{T})-v_{n,1}(0,x)-M_{0,T-s}^{v_{n,1}}]
\\&=\lim_{n\to\infty}u_{n,1}(s,x)\\&=u(s,x)\neq \tilde{u}(s,x)=\E_{2}[Z^{2}_{T-s}],
\end{align*} such that the claim follows.
\end{proof}
\begin{proof}[Proof of \cref{prop:exv}]
Recall that $d=1$ and $L=B$ for a standard Brownian motion $B$. 
We construct a distribution $V$, that is time independent and can thus integrate out time in $J^{T}(\partial_{x} V)$, i.e. instead of 
\begin{align*}
J^{T}(\partial_{x} V)(r)&=\int_{r}^{T}\F^{-1}\paren[\big]{\exp\paren[\big]{(s-r)\frac{1}{2}\abs{2\pi \cdot}^2}\F(\partial_{x} V)}ds\\&=\F^{-1}\paren[\bigg]{(1-\exp((r-T)\frac{1}{2}\abs{2\pi \cdot}^2))\frac{1_{\cdot\neq 0}}{\frac{1}{2}\abs{2\pi\cdot}^{2}}\F(\partial_{x} V)}\\&=:J^{\infty}(\partial_{x} V)-\varphi_{r,T}\ast J^{\infty}(\partial_{x} V),
\end{align*} we consider w.l.o.g.~$J^{\infty}(\partial_{x} V)=\F^{-1}(\frac{1_{\cdot\neq 0}}{\frac{1}{2}\abs{2\pi\cdot}^{2}}\F(\partial_{x} V))=(-\frac{1}{2}\Delta)^{-1}(\operatorname{Id}-\Pi_{0})\partial_{x} V$, where $\Pi_{0}$ is the projection onto the zero-order Fourier mode.\\
Inspired by \cite[Lemma 3.6]{Chouk18}, we set
\[ f (x) = \sum_{k > 0} a_k e^{2 \pi i 2^k x} \]
for $(a_k) \subset \mathbb{C}$ to be determined. To simplify the calculations we assume that $a_{k}=0$ for all odd $k$.  We define
\[ F:=J^{\infty}(\partial_{x} f) = \left( - \frac{1}{2} \Delta \right)^{- 1} (\tmop{id} - \Delta_0)
   \partial_x f = \sum_{k > 0} a_k \frac{2 \pi i 2^k}{\frac{1}{2} | 2 \pi 2^k
   |^2} e^{2 \pi i 2^k x} = \sum_{k > 0} \frac{i a_k}{\pi 2^k} e^{2 \pi i 2^k
   x} . \]
Then, it follows that $\Delta_k f (x) = a_k e^{2 \pi i 2^k x}$ and $\Delta_k F (x) = \frac{i
a_k}{2 \pi 2^k} e^{2 \pi i 2^k x}$, and therefore (because $a_{k}\neq 0$ only for even $k$)
\begin{align*}
  F \reso f &= \sum_{k > 0} \frac{i a_k^2}{\pi 2^k} e^{2 \pi i 2^{k + 1} x},
  \qquad \overline{F} \reso \overline{f} = - \sum_{k > 0} \frac{i
  \overline{a_k}^2}{\pi 2^k} e^{- 2 \pi i 2^{k + 1} x},\\
  F \reso \overline{f} &= \sum_{k > 0} \frac{i | a_k |^2}{\pi 2^k}, \qquad\qquad\quad
  \overline{F} \reso f = - \sum_{k > 0} \frac{i | a_k |^2}{\pi 2^k} .
\end{align*}
\noindent Letting $a_k := 2^{-k/2}$ for even $k$,
we obtain that $f \in \calC^{-1/2-}$. 
Moreover, taking $V := \tmop{Re} (f) = \frac{1}{2} \left( f + \overline{f}
\right)$ and $J^{\infty}(\partial_{x}V) = \tmop{Re} (F) = \frac{1}{2} \left( F + \overline{F}
\right)$, we obtain for the resonant product
\begin{align*}
  \calV_{2}:=J^{\infty}(\partial_{x}V) \reso V & = \frac{1}{4} \left( F \reso f + \overline{F} \reso \overline{f}
  + F \reso \overline{f} + \overline{F} \reso f \right)\\
  & = \frac{1}{4} \sum_{k > 0} \frac{i a_k^2}{\pi 2^k} (e^{2 \pi i 2^{k + 1}
  x} - e^{- 2 \pi i 2^{k + 1} x})\\
  & = \frac{1}{4} \sum_{k > 0} \frac{i }{\pi} 2 i \sin (2 \pi 2^{k + 1}
  x) = - \sum_{k > 0} \frac{1}{2 \pi} \sin (2 \pi 2^{k + 1} x) .
\end{align*}
The latter is a distribution in $\calC^{0-}$. 
Clearly, we have that $V^n \rightarrow V$ in $\calC^{-
1 / 2 -}$ for $V^{n} (x) := \tmop{Re} \left( \sum_{k =
1}^n a_k e^{2 \pi i 2^k x} \right)$. Then it follows 
\begin{align*}
J^{\infty}(\partial_{x} V^{n})
\reso V \rightarrow J^{\infty}(\partial_{x}V) \reso V=:\calV_{2},
\end{align*} in $\calC^{0 -}$ for $n\to\infty$.\\
Let $(c_{n})\subset\mathbb{C}$ to be determined.
The other sequence $(W^{n})$, we then define as follows
\begin{align*}
  W^{n} (x) &:= \tmop{Re} \left( \sum_{k = 1}^n a_k e^{2 \pi i 2^k x}
  \right) + \tmop{Re} (c_n e^{2 \pi i 2^n x}) = V^{n} (x) + \tmop{Re} (c_n e^{2
  \pi i 2^n x}),\\
J^{\infty}(\partial_{x} W^{n})(x)&= \tmop{Re} \left( \sum_{k = 1}^n \frac{i a_k}{\pi 2^k} e^{2
  \pi i 2^k x} \right) + \tmop{Re} \left( \frac{i c_n}{\pi 2^n} e^{2 \pi i 2^n
  x} \right) \\&= J^{\infty}(\partial_{x}V^{n}) (x) + \tmop{Re} \left( \frac{i c_n}{\pi 2^n} e^{2 \pi i 2^n
  x} \right).
\end{align*}
Writing $G_n = \frac{i c_n}{\pi 2^{n}} e^{2 \pi i 2^{n} x}$, we have
(recall that $a_n \in \mathbb{R}$)
\begin{align*}
  G_n \reso f &= \frac{i a_n c_n}{\pi 2^n} e^{2 \pi i 2^{n + 1} x}, \qquad
  \overline{G_n} \reso \overline{f} = - \frac{i a_n \overline{c_n}}{\pi 2^n}
  e^{- 2 \pi i 2^{n + 1} x},\\
  G_n \reso \overline{f}& = \frac{i a_n c_n}{\pi 2^n}, \qquad\qquad\quad \overline{G_n}
  \reso f = - \frac{i a_n \overline{c_n}}{\pi 2^n},
\end{align*}
and thus for $c_n := 2^{n / 2} d_n$ for $(d_{n})\subset\mathbb{C}$ to be determined below,
\begin{align*}
  \tmop{Re} (G_n) \reso V & = \frac{1}{4} \frac{i a_n}{\pi 2^n} \left( c_n
  e^{2 \pi i 2^{n + 1} x} - \overline{c_n} e^{- 2 \pi i 2^{n + 1} x} \right) +
  \frac{1}{4} \frac{a_n}{\pi 2^n} \left( i c_n - i \overline{c_n} \right)\\
  & = \frac{1}{4} \frac{i }{\pi} \left( d_n e^{2 \pi i 2^{n + 1} x} -
  \overline{d_n} e^{- 2 \pi i 2^{n + 1} x} \right) - \frac{1}{2} \frac{
   \tmop{Im} d_n}{\pi}\\
  & = - \frac{1}{2 \pi} [ \tmop{Im} (d_n e^{2 \pi i 2^{n + 1} x}) + \tmop{Im} (d_n)] .
\end{align*}
We now set $d_n := - 2 \pi C i$ for a constant $C \in \mathbb{R}\setminus \{0\}$. Then we have
$W^{n} \rightarrow V$ in $\calC^{- 1 / 2 -}$ and the following convergences
in $\calC^{0 -}$:
\[ \tmop{Re}(G_n) \reso V \to C, \qquad J^{\infty}(\partial_{x}W^{n}) \reso V \to J^{\infty}(\partial_{x}V) \reso V + C=:\mathcal{W}_{2}, \]
for $n\to\infty$.
\end{proof}
\end{section}

\appendix
\begin{section}{Appendix}\label{Appendix A}
\begin{proof}[Proof of \cref{lem:parastr-est}]
We use the paracontrolled structure of $u$ to prove the bound \eqref{eq:para-est}, but nonetheless the bound does not trivially follow from that. We abbreviate $\theta:=\alpha+\beta$. 
Let us first make some observations. By the Schauder estimates, \cite[Corollary 3.2]{kp-sk}, we have that $J^{T}(\partial_{i} V)\in(\mathcal{L}_{T}^{0,\theta-1+(1-\gamma)\alpha})^{d}$ as $V\in C_{T}\calC^{\beta+(1-\gamma)\alpha}_{\R^{d}}$, and thus by the interpolation estimate (3.12) from \cite[Lemma 3.7]{kp-sk} (applied for $\tilde{\theta}=2(\theta-1)\in (0,\alpha)$), 
we obtain that $J^{T}(\partial_{i} V)\in C_{T}^{2(\theta-1)/\alpha}\calC^{(1-\gamma)\alpha-(\theta-1)}_{\R^{d}}$ and $\nabla u\in C_{T}^{2(\theta-1)/\alpha}\calC^{-(\theta-1)}_{\R^{d}}$. With that, we can estimate the resonant and paraproduct product as $(1-\gamma)\alpha>0$ and the following notation $\nabla u_{sr}(x):=\nabla u(r,x)-\nabla u(s,x)$ (and analogously for $J^{T}(\partial_{i}V)$)
\begin{align}\label{eq:goodtimereg1}
\MoveEqLeft
\norm{\nabla u_{sr}\reso J^{T}(\partial_{i} V)_{r}}_{L^{\infty}}+\norm{\nabla u_{sr}\para J^{T}(\partial_{i} V)_{r}}_{L^{\infty}}\nonumber\\&\lesssim \norm{\nabla u_{sr}\reso J^{T}(\partial_{i} V)_{r}}_{\calC^{(1-\gamma)\alpha}}+\norm{\nabla u_{sr}\para J^{T}(\partial_{i} V)_{r}}_{\calC^{(1-\gamma)\alpha}}\nonumber\\&\lesssim\norm{J^{T}(\partial_{i} V)}_{C_{T}\calC^{\theta-1+(1-\gamma)\alpha}}\norm{\nabla u}_{C_{T}^{(2\theta-2)/\alpha}\calC^{-(\theta-1)}} \abs{r-s}^{(2\theta-2)/\alpha}.
\end{align} 
Furthermore, using that by assumption $(1-\gamma)\alpha-(\theta-1)=-\gamma\alpha-\beta+1<0$ since $\gamma\geqslant(2\beta+2\alpha-1)/\alpha>(1-\beta)/\alpha$ in the rough case, and $\gamma>(1-\beta)/\alpha$ in the Young case (cf. \cref{def:enhanced-dist}), we obtain
\begin{align}\label{eq:goodtimereg2}
\MoveEqLeft
\norm{J^{T}(\partial_{i} V)_{sr}\reso \nabla u_{s}}_{L^{\infty}}+\norm{J^{T}(\partial_{i} V)_{sr}\para \nabla u_{s}}_{L^{\infty}}\nonumber\\&\lesssim \norm{J^{T}(\partial_{i} V)_{sr}\reso \nabla u_{s}}_{\calC^{(1-\gamma)\alpha}}+\norm{J^{T}(\partial_{i} V)_{sr}\para \nabla u_{s}}_{\calC^{(1-\gamma)\alpha}}\nonumber\\&\lesssim \norm{J^{T}(\partial_{i} V)}_{C_{T}^{2(\theta-1)/\alpha}\calC^{(1-\gamma)\alpha-(\theta-1)}}\norm{\nabla u}_{C_{T}\calC^{\theta-1}} \abs{r-s}^{(2\theta-2)/\alpha}.
\end{align} Notice that we do not have the symmetric estimate for the time difference in the upper part of the product.\\
Moreover, using the paracontrolled structure we have that
\begin{align}\label{eq:q1}
\MoveEqLeft
\partial_{i} u (r,x)-\partial_{i} u(s,y)\nonumber\\&=(\nabla u\para J^{T}(\partial_{i} V))(r,x)-(\nabla u \para J^{T}(\partial_{i} V))(s,y)+\partial_{i} u^{\sharp}(r,x)-\partial_{i} u^{\sharp}(s,y)\nonumber\\&\quad+(\partial_{i}\nabla u\para J^{T}(V))(r,x)-(\partial_{i}\nabla u\para J^{T}(V))(s,y)\nonumber\\&=(\nabla u\para J^{T}(\partial_{i} V))(r,x)-(\nabla u \para J^{T}(\partial_{i} V))(s,y)+g(r,x)-g(s,y)
\end{align} for $g$ defined as 
\begin{align*}
g:=\partial_{i} u^{\sharp}+\partial_{i}\nabla u\para J^{T}( V).
\end{align*}
By $\partial_{i}\nabla u\para J^{T}( V)\in C_{T}^{(2\theta-2)/\alpha}L^{\infty}\cap C_{T}\calC^{2\theta-2}$, $\partial_{i} u^{\sharp}\in C_{T}^{(2\theta-2)/\alpha}L^{\infty}\cap C_{T}\calC^{2\theta-2}$ and the interpolation estimates, we obtain the desired estimate for $g$:
\begin{align*}
\abs{g(r,x)-g(s,y)}&\leqslant\abs{g(r,y)-g(s,y)}+\abs{g(r,x)-g(r,y)}\\&\lesssim\abs{r-s}^{(2\theta-2)/\alpha}+\abs{x-y}^{2\theta-2}.
\end{align*}
Thus it is left to show that 
\begin{align*}
\MoveEqLeft
\squeeze[1]{\abs{(\nabla u\para J^{T}(\partial_{i} V))(r,\!x)-(\nabla u \para J^{T}(\partial_{i} V))(s,\!y)-\nabla u(s,\!y)\cdot(J^{T}(\partial_{i} V)(r,\!x)-J^{T}(\partial_{i} V)(s,\!y))}}\\&\lesssim\abs{t-s}^{(2\theta-2)/\alpha}+\abs{x-y}^{2\theta-2}.
\end{align*}
To that aim, we replace $r$ by $s$ and subtract the remainder, such that we obtain
\begin{align}\label{eq:x-t-separation}
\MoveEqLeft
\bigl|(\nabla u\para J^{T}(\partial_{i} V))(r,x)-(\nabla u \para J^{T}(\partial_{i} V))(s,y)\nonumber\\&\qquad-\nabla u(s,y)\cdot(J^{T}(\partial_{i} V)(r,x)-J^{T}(\partial_{i} V)(s,y))\bigr|\nonumber\\&\leqslant\bigl|(\nabla u\para J^{T}(\partial_{i} V))(s,x)-(\nabla u \para J^{T}(\partial_{i} V))(s,y)\nonumber\\&\qquad\qquad-\nabla u(s,y)\cdot(J^{T}(\partial_{i} V)(s,x)-J^{T}(\partial_{i} V)(s,y))\bigr|\nonumber\\&\qquad+\abs{(\nabla u\para J^{T}(\partial_{i} V))_{sr}(x)-\nabla u(s,y)J^{T}(\partial_{i} V)_{sr}(x)}.
\end{align}
For the first term in \eqref{eq:x-t-separation}, we abbreviate $v=J^{T}(\partial_{i} V)_{s}\in\calC^{\theta-1}_{\R^{d}}$ and $u=\nabla u_{s}\in\calC^{\theta-1}_{\R^{d}}$ and utilize the space regularities to prove the claim
\begin{align}\label{eq:claim}
\abs{u\para   v(x)- u\para   v(y)- u(y)( v(x)- v(y))}\lesssim\abs{x-y}^{2\theta-2}.
\end{align}
To prove \eqref{eq:claim}, we use ideas from the proof of \cite[Lemma B.2]{Gubinelli2015Paracontrolled}.
We write 
\begin{align}\label{eq:sums}
\MoveEqLeft
u\para   v(x)- u\para   v(y)- u(y)( v(x)- v(y))\nonumber\\&=\sum_{j\geqslant -1}(S_{j-1} u(x)- u(y))(\Delta_{j} v(x)-\Delta_{j} v(y))+\sum_{j}(S_{j-1} u(y)-S_{j-1} u(x))\Delta_{j}  v(y)
\end{align} with the notation $S_{j-1}u:=\sum_{-1\leqslant i\leqslant j-1}\Delta_{i}u$.
The first summand, we estimate in two different ways, once using that $\calC^{\theta-1}\subset C^{\theta-1}$, where $C^{\theta-1}$ is the Hölder space with $\theta-1\in (0,1)$ and on the other hand using the mean value theorem, such that
\begin{align*}
\MoveEqLeft
\abs{(S_{j-1} u(x)- u(y))(\Delta_{j} v(x)-\Delta_{j} v(y))}\\&\lesssim 2^{-j(\theta-1)}\norm{u}_{\theta-1}\paren[\big]{\abs{x-y}^{\theta-1}\norm{v}_{\theta-1}\wedge\abs{x-y}\norm{D\Delta_{j}v}_{L^{\infty}}}\\&\lesssim \squeeze[0.1]{2^{-j(\theta-1)}\norm{u}_{\theta-1}\paren[\big]{\abs{x-y}^{\theta-1}\norm{v}_{\theta-1}\wedge\abs{x-y}2^{-j(\theta-2)}\norm{v}_{\theta-1}}}.
\end{align*}
The second summand, we estimate analogously using $\theta-1\in (0,1)$ (and thus $\theta-2<0$) 
\begin{align*}
\MoveEqLeft
\abs{(S_{j-1} u(y)-S_{j-1} u(x))\Delta_{j}  v(y)}\\&\lesssim \paren[\big]{\abs{x-y}^{\theta-1}\norm{u}_{\theta-1}\wedge\abs{x-y}\norm{DS_{j-1}u}_{L^{\infty}}} 2^{-j(\theta-1)}\norm{v}_{\theta-1}\\&\lesssim\paren[\big]{\abs{x-y}^{\theta-1}\norm{u}_{\theta-1}\wedge\abs{x-y}2^{-j(\theta-2)}\norm{u}_{\theta-1}}2^{-j(\theta-1)}\norm{v}_{\theta-1}.
\end{align*} 
We can w.l.o.g.~assume that $\abs{x-y}\leqslant 1$. Otherwise the estimate \eqref{eq:claim} is trivial as $u,v\in L^{\infty}$. Then we let $j_{0}$ such that $2^{-j_{0}}\sim\abs{x-y}$ and decompose both sums in \eqref{eq:sums} in the part with $j>j_{0}$, such that $2^{-j}< 2^{-j_{0}}\leqslant\abs{x-y}$, and in the part with $j\leqslant j_{0}$ (i.p. a finite sum), such that $2^{-j}\geqslant\abs{x-y}$, and perform an analogous estimate for the two. We write down the estimate for the first sum. That is, we have
\begin{align*}
\MoveEqLeft
\sum_{j\geqslant -1}\abs{(S_{j-1} u(x)- u(y))(\Delta_{j} v(x)-\Delta_{j} v(y))}\\&\lesssim\norm{u}_{\theta-1}\norm{v}_{\theta-1}\paren[\bigg]{\sum_{j> j_{0}}\abs{x-y}^{\theta-1}2^{-j(\theta-1)}+\sum_{j\leqslant j_{0}}\abs{x-y}2^{-j(\theta-2)}2^{-j(\theta-1)}}\\&\lesssim\norm{u}_{\theta-1}\norm{v}_{\theta-1}\paren[\bigg]{\abs{x-y}^{\theta-1}2^{-j_{0}(\theta-1)}+\sum_{j\leqslant j_{0}}\abs{x-y}\abs{x-y}^{2\theta-3}}\\&\lesssim\norm{u}_{\theta-1}\norm{v}_{\theta-1}\paren[\bigg]{\abs{x-y}^{\theta-1}\abs{x-y}^{(\theta-1)}+\abs{x-y}\abs{x-y}^{2\theta-3}}\\&=\norm{u}_{\theta-1}\norm{v}_{\theta-1}\abs{x-y}^{2\theta-2}
\end{align*} using $\theta-1>0$, such that the first series converges, and $2\theta-3<0$ and that the second sum is a finite sum. Hence the claim \eqref{eq:claim} follows.
Rewritten in the previous notation, we thus obtain a bound uniformly in $s,x,y$, that is,
\begin{align*}
\MoveEqLeft
\bigl|(\nabla u\para J^{T}(\partial_{i} V))(s,x)-(\nabla u \para J^{T}(\partial_{i} V))(s,y)\\&\qquad-\nabla u(s,y)\cdot(J^{T}(\partial_{i} V)(s,x)-J^{T}(\partial_{i} V)(s,y))\bigr|\\&\lesssim\abs{x-y}^{2\theta-2}.
\end{align*}
We are left with the second term in \eqref{eq:x-t-separation}, that we furthermore decompose as follows
\begin{align*}
\MoveEqLeft
\abs{(\nabla u\para J^{T}(\partial_{i} V))_{sr}(x)-\nabla u(s,y)\cdot J^{T}(\partial_{i} V)_{sr}(x)}\\&\leqslant\abs{\nabla u_{s}\para J^{T}(\partial_{i} V)_{sr}(x)-\nabla u(s,x)\cdot J^{T}(\partial_{i} V)_{sr}(x)}+\abs{\nabla u_{sr}\para J^{T}(\partial_{i} V)_{r}(x)}\\&\quad + \abs{(\nabla u(s,x)-\nabla u(s,y))\cdot J^{T}(\partial_{i} V)_{sr}(x)}\\&\leqslant\abs{\nabla u_{s}\arap J^{T}(\partial_{i} V)_{sr} (x)}+\abs{\nabla u_{s}\reso J^{T}(\partial_{i} V)_{sr} (x)}+\abs{\nabla u_{sr}\para J^{T}(\partial_{i} V)_{r}(x)}\\&\quad+\abs{(\nabla u(s,x)-\nabla u(s,y))\cdot J^{T}(\partial_{i} V)_{sr}(x)}\\&\lesssim\abs{r-s}^{(2\theta-2)/\alpha}+\abs{(\nabla u(s,x)-\nabla u(s,y))\cdot J^{T}(\partial_{i} V)_{sr}(x)}\\&\lesssim\abs{r-s}^{(2\theta-2)/\alpha}+\abs{x-y}^{\theta-1}\abs{r-s}^{(\theta-1)/\alpha},
\end{align*}
where we used the estimates from the beginning, that is \eqref{eq:goodtimereg1} and \eqref{eq:goodtimereg2}.
Thus altogether we obtain the estimate for \eqref{eq:x-t-separation}:
\begin{align*}
\MoveEqLeft
\bigl|(\nabla u\para J^{T}(\partial_{i} V))(r,x)-(\nabla u \para J^{T}(\partial_{i} V))(s,y)\\&\qquad-\nabla u(s,y)\cdot(J^{T}(\partial_{i} V)(r,x)-J^{T}(\partial_{i} V)(s,y))\bigr|\\&\lesssim\abs{r-s}^{(2\theta-2)/\alpha}+\abs{x-y}^{2\theta-2}+\abs{x-y}^{\theta-1}\abs{r-s}^{(\theta-1)/\alpha},
\end{align*} where for $\abs{x-y}<\abs{r-s}^{1/\alpha}$ as well as for $\abs{x-y}\geqslant\abs{r-s}^{1/\alpha}$ (i.e. $\abs{r-s}\leqslant\abs{x-y}^{\alpha}$), we obtain the desired estimate \eqref{eq:para-est}.
\end{proof}
\end{section}

\section*{Acknowledgements}

H.K.~is supported by the Austrian Science Fund (FWF) Stand-Alone programme P 34992. Part of the work was done when H.K. was employed at Freie Universität Berlin and funded by the DFG under Germany's Excellence Strategy - The Berlin Mathematics Research Center MATH+ (EXC-2046/1, project ID: 390685689). N.P.~ gratefully acknowledges financial support by the DFG via Research Unit FOR2402.

\end{document}